\newtheorem{thm}{Theorem}[section]
\newtheorem{theorem}[thm]{Theorem}
\newtheorem{proposition}[thm]{Proposition}
\newtheorem{lemma}[thm]{Lemma}
\newtheorem{claim}[thm]{Claim}
\newtheorem{corollary}[thm]{Corollary}
\theoremstyle{definition}
\newcommand{\bp}{{\bm p}}
\newcommand{\bq}{{\bm q}}
\newcommand{\br}{{\bm r}}
\newcommand{\bmm}{{\bm m}}
\begin{document}
\title{Sufficient Conditions for the Global Rigidity of Graphs}
\author{Shin-ichi Tanigawa\thanks{
  Research Institute for Mathematical Sciences, Kyoto University, Kyoto 606-8502 Japan, +81-75-753-7793, \texttt{tanigawa@kurims.kyoto-u.ac.jp} 
  Supported by JSPS Grant-in-Aid for Scientific Research(A)(25240004) and JSPS Grant-in-Aid for Young Scientist (B), 24740058.}}
\maketitle
\begin{abstract}
We investigate how to find generic and globally rigid realizations of graphs in $\mathbb{R}^d$
based on elementary geometric observations.
Our arguments lead to new proofs of a combinatorial characterization of 
the global rigidity of graphs in $\mathbb{R}^2$ by Jackson and Jord{\'a}n
and that of body-bar graphs in $\mathbb{R}^d$ recently shown by Connelly, Jord{\'a}n, and Whiteley.
We also extend  the 1-extension theorem and Connelly's composition theorem,
which are main tools for generating globally rigid graphs in $\mathbb{R}^d$. 
In particular we show that any vertex-redundantly rigid graph in $\mathbb{R}^d$ is 
globally rigid in $\mathbb{R}^d$, where 
a graph $G=(V,E)$ is called vertex-redundantly rigid if $G-v$ is rigid for any $v\in V$.

\medskip
\noindent
{\it Keywords}: rigidity of graphs, global rigidity, unique graph realizations, rigidity matroid
\end{abstract}

\section{Introduction}
A $d$-dimensional {\em bar-joint framework} (or simply a framework) is a pair $(G,\bp)$
of a graph $G=(V,E)$ and a map $\bp:v\in V\mapsto p_v\in \mathbb{R}^d$. 
A pair $(G,\bp)$ is also called a {\em realization} of $G$ in $\mathbb{R}^d$.
We say that  $(G,\bp)$ is {\em equivalent to} $(G,\bq)$, denoted $(G,\bp)\simeq (G,\bq)$, if 
\begin{equation}
\|p_u-p_v\|= \|q_u-q_v\| \qquad \text{for all } uv\in E, \label{eq:bar-length}
\end{equation}
and  we say that $\bp$ and $\bq$ are {\em congruent}, denoted $\bp\equiv \bq$, if
\begin{equation*}
\|p_u-p_v\|=\|q_u-q_v\| \qquad \text{for all pairs } u, v\in V.
\end{equation*}

The set of all maps $\bp$ of the form $\bp:V\rightarrow \mathbb{R}^d$ is denoted by $\mathbb{R}^{dV}$, and 
we will regard $\mathbb{R}^{dV}$ as the $d|V|$-dimensional Euclidean space.
A framework $(G,\bp)$ is called {\em rigid} 
if there is an open neighborhood $U$ of $\bp$ in $\mathbb{R}^{dV}$
such that $\bp\equiv \bq$ holds 
for any $\bq\in U$ with $(G,\bp)\simeq (G,\bq)$.
$(G,\bp)$ is called {\em globally rigid} if $\bp\equiv \bq$ holds 
for any $\bq\in \mathbb{R}^{dV}$ with $(G,\bp)\simeq (G,\bq)$.

A map $\bp\in \mathbb{R}^{dV}$ is called {\em generic} if the set of entries in $\bp$ is algebraically 
independent over $\mathbb{Q}$.
A framework $(G,\bp)$ is called {\em generic} if $\bp$ is generic.
As observed by Gluck~\cite{gluck} and Asimov and Roth~\cite{asimov1978}, 
a generic realization of $G$ is rigid in $\mathbb{R}^d$ 
if and only if every generic realization of $G$ is rigid in $\mathbb{R}^d$.
Hence the generic rigidity can be considered as a property of the underlying graph.
We say that a graph is {\em rigid} in $\mathbb{R}^d$ if 
some/any generic realization of $G$ is rigid in $\mathbb{R}^d$~\cite{asimov1978}.
Laman's theorem~\cite{laman:Rigidity:1970} gives a combinatorial characterization of rigid graphs 
in $\mathbb{R}^2$, 
which states that $G$ is rigid in $\mathbb{R}^2$ if and only if $G$ contains a subgraph $H$
satisfying $|E(H)|=2|V(G)|-3$ and $|E(H')|\leq 2|V(H')|-3$ for any subgraph $H'$ of $H$ with $|V(H')|\geq 2$.
Extending Laman's theorem to rigidity in $\mathbb{R}^3$ is  one of the most important open problems in this field.
See, e.g., \cite{Whiteley:1997} for more details.

The analysis of global rigidity turns out to be  more challenging,
but substantial progress has been made in the last decade. 
The main tool for analyzing global rigidity  is a stress matrix introduced by Connelly~\cite{connelly1982}.
(For the definition of stress matrices, see, e.g.,~\cite{connelly2005}.)
Connelly~\cite{connelly2005} showed that, 
if $(G,\bp)$ is generic and the rank of a stress matrix of $(G,\bp)$ is $|V(G)|-d-1$, then 
$(G,\bp)$ is globally rigid. He also conjectured that the rank condition is necessary.
This conjecture was recently confirmed by Gortler, Healy, and Thurston~\cite{gortler2010}.
An important corollary of their result is:
\begin{theorem}[Gortler, Healy, and Thurston~\cite{gortler2010}]
\label{thm:generic}
If there is a globally rigid generic realization of $G$ in $\mathbb{R}^d$,
then every generic realization of $G$ is globally rigid in $\mathbb{R}^d$.
\end{theorem}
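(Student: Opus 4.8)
The plan is to deduce Theorem~\ref{thm:generic} from the two results quoted in the excerpt---Connelly's sufficiency theorem (a generic framework whose stress matrix has rank $|V(G)|-d-1$ is globally rigid) and the Gortler--Healy--Thurston confirmation of its necessity---by bridging them with the observation that the largest rank attainable by an equilibrium stress matrix is one and the same number at every generic realization of $G$.

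First I would fix notation. For $(G,\bp)$, an equilibrium stress is a map $\omega\colon E\to\mathbb{R}$ with $\sum_{u\colon uv\in E}\omega_{uv}(p_v-p_u)=0$ for every $v\in V$, and its stress matrix $\Omega(\omega)$ is the symmetric $V\times V$ matrix with off-diagonal $uv$-entry equal to $-\omega_{uv}$ if $uv\in E$ and $0$ otherwise, the diagonal being chosen so that every row sums to $0$. The space $S(\bp)$ of equilibrium stresses is the left kernel of the rigidity matrix $R(G,\bp)$, hence it is cut out by linear equations whose coefficients are polynomials in $\bp$. Set $m(\bp):=\max\{\rank\Omega(\omega):\omega\in S(\bp)\}$.

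Second---and this is the step I expect to need the most care---I would show that $m(\bp)$ takes a common value $m^\ast$ at every generic $\bp$. The rank of $R(G,\bp)$ is generically constant, say equal to $r$; on the Zariski-open $\mathbb{Q}$-set $U$ where this rank is attained one can, by Cramer's rule applied to a fixed choice of independent columns, pick a basis $\omega^{(1)}(\bp),\dots,\omega^{(s)}(\bp)$ of $S(\bp)$ with entries rational in $\bp$, where $s=|E|-r$. For each $k$, the condition $\rank\Omega\bigl(\sum_i t_i\,\omega^{(i)}(\bp)\bigr)\ge k$ is the non-vanishing of some $k\times k$ minor $M_k(\bp,t)$, a polynomial in $(\bp,t)$; thus $m(\bp)\ge k$ for some $t$ precisely when $M_k$ does not vanish identically on $U\times\mathbb{R}^s$, and in that case $M_k$ remains not identically zero in $t$ after specializing $\bp$ to any generic value. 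Since a generic $\bp$ lies in $U$ and avoids every proper $\mathbb{Q}$-algebraic subset, $m(\bp)$ equals $\max_{\bp\in U}m(\bp)$ for every generic $\bp$, so $m$ is constant there. Moreover a generic configuration affinely spans $\mathbb{R}^d$, so $\mathbf{1}$ and the $d$ coordinate vectors of $\bp$ are linearly independent members of $\ker\Omega(\omega)$, giving $m^\ast\le|V(G)|-d-1$.

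Finally I would assemble the pieces. If $(G,\bp_0)$ is a generic globally rigid realization, then the Gortler--Healy--Thurston theorem supplies an equilibrium stress of $(G,\bp_0)$ whose stress matrix has rank $|V(G)|-d-1$, whence $m^\ast=|V(G)|-d-1$. Consequently, for every generic $\bp$ there is $\omega\in S(\bp)$ with $\rank\Omega(\omega)=|V(G)|-d-1$, and Connelly's sufficiency theorem then yields that $(G,\bp)$ is globally rigid. The conceptual content is carried entirely by the two cited theorems; the only work is the genericity bookkeeping of the third paragraph, which is where I would be most careful about the fact that the stress space $S(\bp)$ itself varies with $\bp$.
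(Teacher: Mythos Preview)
The paper does not prove Theorem~\ref{thm:generic}; it is quoted from Gortler, Healy, and Thurston~\cite{gortler2010} and used throughout as a black box, so there is no in-paper proof to compare against.

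Your derivation is the standard one and is essentially correct. The only nontrivial step is the generic constancy of $m(\bp)$, and your argument for it is sound, with two small points of phrasing. First, ``$\rank\Omega\ge k$'' means \emph{some} $k\times k$ minor is nonzero, not a single distinguished $M_k$; the argument goes through by running it for each minor separately and taking the union (each coefficient of each minor, viewed as a polynomial in $t$, is a rational function of $\bp$ over $\mathbb{Q}$, so the set $\{\bp\in U:m(\bp)\ge k\}$ is $\mathbb{Q}$-Zariski-open and hence contains every generic point whenever it is nonempty). Second, you should dispose of the case $|V(G)|\le d+1$ separately, since there the inequality $m^\ast\le|V(G)|-d-1$ degenerates; in that range a generic realization is affinely independent and the theorem is immediate from the fact that such a framework is globally rigid if and only if $G$ is complete.
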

Thus generic global rigidity is a property of graphs, and we can say that a graph $G$ is 
{\em globally rigid} in $\mathbb{R}^d$ if some/any generic realization of $G$ is globally rigid.

Another breakthrough in this context is a combinatorial characterization of 
globally rigid graphs in $\mathbb{R}^2$ by Jackson and Jord{\'a}n~\cite{JJ05}.
In~\cite{H92} Hendrickson showed  the following necessary condition for 
global rigidity of graphs.
\begin{theorem}[Hendrickson~\cite{H92}]
\label{thm:hendrickson}
Let $(G,\bp)$ be a generic framework in $\mathbb{R}^d$.
If $(G,\bp)$ is globally rigid, then either $G$ is a complete graph on at most $d+1$ vertices or 
$G$ is $(d+1)$-connected and redundantly rigid,
where $G$ is called redundantly rigid if $G-e$ is rigid for all $e\in E(G)$.
\end{theorem}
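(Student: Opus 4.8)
The plan is to establish the two necessary conditions separately, after a trivial reduction. Since $(G,\bp)$ is globally rigid it is in particular rigid, so $G$ is rigid; and a rigid graph on at most $d+1$ vertices must be complete (a generic realization spans a simplex, and deleting any edge leaves too few constraints for rigidity in $\mathbb{R}^d$). Hence if $|V(G)|\le d+1$ the first alternative holds, and I may assume $|V(G)|\ge d+2$ and aim to prove that $G$ is $(d+1)$-connected and redundantly rigid. Note also that, being rigid with at least three vertices, $G$ is $2$-connected; in particular $G-e$ is connected for every edge $e$.

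\textbf{$(d+1)$-connectivity.} Suppose $S$ is a vertex cut of $G$ with $|S|\le d$. Write $V(G)=A\sqcup B\sqcup S$, where $A$ is the vertex set of one component of $G-S$ and $B\neq\emptyset$ the union of the rest, so that $G$ has no edge between $A$ and $B$. Because $|S|\le d$, the points $\{p_v:v\in S\}$ lie on a hyperplane $H$, and when $|S|<d$ genericity lets me choose $H$ to miss every point $\{p_v:v\in A\}$ as well. Let $\sigma$ be the reflection of $\mathbb{R}^d$ in $H$ and define $\bq$ by $q_v=p_v$ for $v\in B\cup S$ and $q_v=\sigma(p_v)$ for $v\in A$. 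Every edge of $G$ lies inside $A\cup S$ or inside $B\cup S$, and on each of these sets $\bq$ agrees with $\bp$ up to an isometry fixing $S$ pointwise, so $(G,\bq)\simeq(G,\bp)$. On the other hand $\bq\not\equiv\bp$: a congruence carrying $\bp$ to $\bq$ would fix $\{p_v:v\in B\cup S\}$ and would have to agree with $\sigma$ on $\{p_v:v\in A\}$, which genericity forbids since $A$ is nonempty and $\sigma$ moves its points. This contradicts the global rigidity of $(G,\bp)$.

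\textbf{Redundant rigidity.} Fix $e=ij\in E(G)$ and suppose, for contradiction, that $G-e$ is not rigid. As $G$ is rigid and $\bp$ generic, $(G,\bp)$ is infinitesimally rigid, so its rigidity matrix has rank $d|V(G)|-\binom{d+1}{2}$; deleting the row of $e$ lowers the rank by at most one, and by exactly one because $G-e$ is flexible. Thus $G-e$ has precisely one flexing degree of freedom. I would next argue that, for generic $\bp$, the whole configuration space $W=\{\bq\in\mathbb{R}^{dV}:\|q_u-q_v\|=\|p_u-p_v\|\text{ for all }uv\in E(G-e)\}$ is a smooth manifold, and that it is bounded modulo translations because $G-e$ is connected (all but one edge length is fixed, bounding the diameter of a configuration); consequently, modulo congruences, the component $X$ through the congruence class $[\bp]$ of $\bp$ is a closed curve, that is, a circle. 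Now consider $f(\bq)=\|q_i-q_j\|^2$ on $X$. It is non-constant: otherwise $f\equiv\|p_i-p_j\|^2$ on $X$, so every configuration in $X$ would be a realization of $G$ equivalent to $(G,\bp)$ and hence, by global rigidity, congruent to $\bp$, forcing $X=\{[\bp]\}$. Moreover the derivative of $f$ along $X$ at $[\bp]$ is nonzero: otherwise every infinitesimal flex of $G-e$ at $\bp$ would preserve $\|p_i-p_j\|$ and therefore be an infinitesimal flex of $G$, contradicting the infinitesimal rigidity of $(G,\bp)$. Hence $\|p_i-p_j\|^2=f([\bp])$ lies strictly between $\min_X f$ and $\max_X f$, so traversing the circle $X$ once, $f$ rises above this value and returns to it from below; by the intermediate value theorem it attains the value $\|p_i-p_j\|^2$ at some $[\bq]\neq[\bp]$. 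Then $(G,\bq)\simeq(G,\bp)$ while $\bq\not\equiv\bp$, again contradicting global rigidity. Therefore $G-e$ is rigid for every $e$.

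\textbf{The main obstacle.} The step I expect to be hardest is the structural claim that the component $X$ is a circle rather than, say, an arc with "locked" endpoint configurations at which the intermediate-value step would break down. This is precisely where genericity of $\bp$ is indispensable: one must verify that the image under the edge-length map of the locus where the rigidity matrix drops rank has dimension strictly below the generic rank, so that a generic edge-length vector is a regular value and its fibre is a smooth closed manifold, and one must also control sub-dimensional configurations occurring along the flex, which could otherwise make the quotient by the congruence group fail to be a $1$-manifold. Granting this, the reflection argument for connectivity and the intermediate-value argument for redundant rigidity are routine.
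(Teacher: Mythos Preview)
The paper does not give its own proof of this statement: Theorem~\ref{thm:hendrickson} is quoted from Hendrickson~\cite{H92} as background and is invoked without proof (for instance in the proofs of Theorems~\ref{thm:body-bar} and~\ref{thm:global-comb}). There is therefore nothing in the paper to compare your argument against.

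For what it is worth, your sketch reproduces Hendrickson's original argument and is correct in outline. In the connectivity part, the verification that $\bq\not\equiv\bp$ is cleanest by picking $a\in A$ and $b\in B$ and noting $\|q_a-q_b\|=\|\sigma(p_a)-p_b\|\neq\|p_a-p_b\|$, since $p_b\notin H$; this holds when $|S|=d$ by genericity of $\bp$, and when $|S|<d$ by choosing $H$ to avoid every $p_v$ with $v\notin S$ (you only mention avoiding $A$, but the same freedom lets you avoid $B$ as well). In the redundant-rigidity part you have correctly isolated the only substantial difficulty: showing that for generic $\bp$ the fibre of the edge-length map of $G-e$, modulo congruence, is a smooth compact $1$-manifold on the component through $[\bp]$. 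This needs a semi-algebraic argument that the edge-length vectors over which the fibre is singular, or meets the locus of affinely degenerate configurations, form a lower-dimensional set---exactly what Hendrickson carries out in~\cite{H92}. Granting that, your non-constancy claim, the non-criticality of $[\bp]$, and the intermediate-value step are all correct.
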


A useful method for generating rigid/globally rigid graphs is the so-called {\em Henneberg operation},
that is, an operation that creates a new  graph by adding a new vertex.
For a graph $G=(V,E)$, a {\em 0-extension} in $\mathbb{R}^d$ 
adds a new vertex with $d$ new edges incident to the new vertex,
and a {\em 1-extension} in $\mathbb{R}^d$ removes an existing edge $e$
and adds a new vertex with $d+1$ new incident edges 
so that the new vertex is incident with the both endvertices of $e$.
It is known that both operations preserve the rigidity of graphs in $\mathbb{R}^d$,
which gives a proof of Laman's theorem in case of $d=2$, see~\cite{laman:Rigidity:1970}.
Analogously, Connelly showed that the 1-extension preserves the corank of stress matrices 
and conjectured that for $d=2$ every graph satisfying Hendrickson's condition can be generated  from $K_4$ by a sequence of 1-extensions and edge additions.
\begin{theorem}[Connelly~\cite{connelly2005}, Jackson, Jord{\'a}n and Szabadka~\cite{JJS06}, Szabadka~\cite{szabadka}]\footnote{What Connelly proved in \cite{connelly2005} is that 1-extension preserves the corank of stress matrices, which implies that 1-extension preserves the global rigidity due to the algebraic characterization by Gortler et al.~\cite{gortler2010}. The present form of Theorem~\ref{thm:connelly} for $d=2$ was first proved by Jackson, Jord{\'a}n and Szabadka~\cite{JJS06} before \cite{gortler2010}. Their proof was extended for general dimension in \cite{szabadka}. 
Szabadka's result is actually more general, see the discussion after Lemma~\ref{lem:1extension-extension} for more detail.}
\label{thm:connelly}
A graph obtained from a globally rigid graph in $\mathbb{R}^d$ by a 1-extension is globally rigid in $\mathbb{R}^d$.
\end{theorem}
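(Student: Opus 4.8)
The plan is to use Theorem~\ref{thm:generic} in the easy direction: it suffices to produce a single generic globally rigid realization of the $1$-extension $G'$. Write the $1$-extension as deleting an edge $xy$ of $G=(V,E)$ and adding a vertex $v$ adjacent to $x$, $y$ and to $d-1$ further vertices, and let $N$ be the set of $d+1$ neighbours of $v$. One should assume $|V|\ge d+2$: otherwise $G=K_{d+1}$, the $1$-extension is $K_{d+2}-xy$, and reflecting an endpoint of $xy$ in the hyperplane spanned by its $d$ neighbours shows $K_{d+2}-xy$ is \emph{not} globally rigid in $\mathbb R^d$, so the statement is intended for $|V(G)|\ge d+2$. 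Under this assumption Hendrickson's theorem (Theorem~\ref{thm:hendrickson}) gives that $G$ is redundantly rigid, so $G-xy$ is rigid. Fix a generic $\bp\colon V\to\mathbb R^d$; then $(G,\bp)$ is globally rigid by Theorem~\ref{thm:generic}, and since $G-xy$ is generically rigid, $(G-xy,\bp)$ has only finitely many pairwise non-congruent equivalent realizations. Fix representatives $\bq^{(1)},\dots,\bq^{(m)}$ of those that are not congruent to $\bp$. A key observation is that no $\bq^{(j)}$ satisfies $\bq^{(j)}|_N\equiv\bp|_N$: if an isometry carried $\bq^{(j)}|_N$ to $\bp|_N$, then after applying it we would have $q^{(j)}_x=p_x$ and $q^{(j)}_y=p_y$ (as $x,y\in N$), so $\bq^{(j)}$ would be equivalent to $\bp$ in $G$ itself, forcing $\bq^{(j)}\equiv\bp$, a contradiction.

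Now place $v$ at a point $p'_v$ whose coordinates are generic over $\mathbb Q(\bp)$, and let $\bp'$ be the resulting realization of $G'$, which is generic. I claim $(G',\bp')$ is globally rigid; by Theorem~\ref{thm:generic} this finishes the proof. Let $(G',\bq')\simeq(G',\bp')$. Restricting to $V$ gives $(G-xy,\bq'|_V)\simeq(G-xy,\bp)$, so $\bq'|_V$ is congruent either to $\bp$ or to some $\bq^{(j)}$. If $\bq'|_V\equiv\bp$, then after an isometry $\bq'|_V=\bp$, and $q'_v$ has the prescribed distances $\|p'_v-p_u\|$ to the $d+1$ affinely independent points $\{p_u:u\in N\}$ (affine independence by genericity of $\bp$), so $q'_v$ is the unique common point of those $d+1$ spheres, namely $p'_v$, and $\bq'\equiv\bp'$. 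So everything reduces to excluding the case $\bq'|_V\equiv\bq^{(j)}$: after an isometry take $\bq'|_V=\bq^{(j)}$, and then $p'_v$ lies in
\[
E_j:=\{\,p'\in\mathbb R^d:\ \exists\,w\in\mathbb R^d\text{ with }\|w-q^{(j)}_u\|=\|p'-p_u\|\text{ for all }u\in N\,\},
\]
a constructible set (the projection of a real algebraic set). I will show $E_j$ is not Zariski dense; then its closure is a proper subvariety of $\mathbb R^d$ defined over $\overline{\mathbb Q(\bp)}$ (each $\bq^{(j)}$ being algebraic over $\mathbb Q(\bp)$), so a point $p'_v$ generic over $\mathbb Q(\bp)$ lies in no $E_j$, contradicting $p'_v\in E_j$.

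To see $E_j$ is not dense, assume it is; being constructible it then contains a nonempty Zariski-open set $U$. For $p'\in U$ pick a corresponding $w$; subtracting the squared equation for $u\in N\setminus\{x\}$ from that for $x$ gives $\langle w,\,q^{(j)}_u-q^{(j)}_x\rangle=L_u(p')$, where $p'\mapsto\bigl(L_u(p')\bigr)_{u\in N\setminus\{x\}}$ is an invertible affine map of $\mathbb R^d$ (using that $\{p_u-p_x:u\in N\setminus\{x\}\}$ is a basis, by genericity of $\bp$). As $p'$ ranges over the dense set $U$ these right-hand sides fill a Zariski-dense subset of $\mathbb R^d$, so solvability for $w$ forces $\{q^{(j)}_u-q^{(j)}_x:u\in N\setminus\{x\}\}$ to span $\mathbb R^d$; hence $w$ is \emph{uniquely} determined by $p'$ and depends affinely on it, say $w=Ap'+b$. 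Substituting into the (squared) equation $\|p'-p_x\|^2=\|Ap'+b-q^{(j)}_x\|^2$ and comparing quadratic terms gives $A^{\!\top}A=I$, so $p'\mapsto Ap'+b$ is an isometry; comparing the lower-order terms in all the equations then yields $q^{(j)}_u=Ap_u+b$ for every $u\in N$. Thus $\bq^{(j)}|_N\equiv\bp|_N$, contradicting the key observation of the first paragraph.

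The main obstacle is exactly this last point — that a non-congruent equivalent realization of $G-xy$ cannot be extended over the new vertex once $v$ is placed generically. This is where global rigidity of $G$ is used essentially, through the inclusion $\{x,y\}\subseteq N$, and it is also the only place one must worry that the neighbourhood $N$ might be geometrically degenerate in $\bq^{(j)}$ — the computation above shows such degeneracy is incompatible with $E_j$ being dense, so no separate case is needed. The remaining ingredients, namely finiteness of the set of equivalent realizations of a generic rigid framework up to congruence and the fact that a dense constructible set contains a dense Zariski-open set, are standard.
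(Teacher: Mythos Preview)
Your proof is correct, with one terminological quibble: over $\mathbb{R}$ the projection $E_j$ is semi-algebraic rather than constructible in the complex sense, and what you actually use is that a Zariski-dense semi-algebraic set has full dimension and hence contains a nonempty \emph{Euclidean}-open set $U$ --- which is all the subsequent computation requires. You also correctly note that the theorem as literally stated fails when $G=K_{d+1}$ (the $1$-extension is $K_{d+2}-xy$, which is not globally rigid in $\mathbb{R}^d$), so the hypothesis $|V(G)|\ge d+2$ is genuinely needed.

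Your route is different from the paper's. The paper derives Theorem~\ref{thm:connelly} from the more general Lemma~\ref{lem:1extension-extension} (if $d_G(v)>d$, $G-v$ is rigid, and $G-v+K(N_G(v))$ is globally rigid, then $G$ is globally rigid), whose proof is purely metric: one places $p_v$ inside a tiny ball $B(p_i,\epsilon/4)$ around some neighbour $i\in N_G(v)$; then for any equivalent $\bq$ the triangle inequality through $v$ yields $\bigl|\|p_i-p_j\|-\|q_i-q_j\|\bigr|\le\epsilon$ for each $j\in N_G(v)$, and finiteness of $c(G-v,\bp')$ forces equality once $\epsilon$ is small enough, so the non-edge $ij$ may be added for free. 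Iterating over all non-edges inside $N_G(v)$ --- invoking Theorem~\ref{thm:generic} at each step to switch realizations --- reduces to the global rigidity of $G-v+K(N_G(v))$. You instead place $p_v'$ generically and dispose of all non-congruent $\bq^{(j)}$ in a single shot, via the algebraic computation showing each $E_j$ is nowhere dense; the specific $1$-extension structure $\{x,y\}\subseteq N$ enters only through your ``key observation'' $\bq^{(j)}|_N\not\equiv\bp|_N$. The paper's argument is more elementary (no algebraic geometry, just the triangle inequality and Proposition~\ref{prop:key}) and proves the strictly stronger Lemma~\ref{lem:1extension-extension}, which applies even when no edge is deleted; your argument avoids the iterative appeal to Theorem~\ref{thm:generic} and gives a clean one-step proof tailored to $1$-extension.
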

The conjecture was confirmed by Berg and Jord{\'a}n~\cite{BJ03} for rigidity circuits and later
by Jackson and Jord{\'a}n~\cite{JJ05} for the general case.
\begin{theorem}[Jackson and Jord{\'a}n~\cite{JJ05}]
\label{thm:jj}
A 3-connected redundantly rigid graph in $\mathbb{R}^2$ can be constructed from $K_4$ by a sequence of 1-extensions and edge additions.
\end{theorem}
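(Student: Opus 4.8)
The plan is to prove the statement by induction on $|V(G)|$. A $3$-connected graph on four vertices is $K_4$, which is the base case, so let $G$ be $3$-connected and redundantly rigid with $|V(G)|\ge 5$. It suffices to find a strictly smaller $3$-connected redundantly rigid graph $G'$ such that $G$ is obtained from $G'$ by a single 1-extension or by adding one edge: applying the induction hypothesis to $G'$ then expresses $G'$, hence also $G$, as built from $K_4$ by 1-extensions and edge additions. So the content is the reduction claim: \emph{either (i) some $G-e$ with $e\in E(G)$ is again $3$-connected and redundantly rigid, or (ii) there is a vertex $v$ of degree $3$ and a non-edge $xy$ with $x,y\in N(v)$ such that $(G-v)+xy$ is $3$-connected and redundantly rigid} (in case (ii), $G$ is exactly the 1-extension of $(G-v)+xy$ along the edge $xy$, the new vertex $v$ being joined also to the third neighbour of $v$).

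If case (i) fails, then $G$ is minimal with respect to being $3$-connected and redundantly rigid, and here I would first argue, in the spirit of the Tutte--Halin fact that minimally $3$-connected graphs contain a degree-$3$ vertex (using also that redundant rigidity in $\mathbb{R}^2$ forces minimum degree $\ge 3$), that $G$ has a suitable degree-$3$ vertex $v$; if some degree-$3$ vertex has $\{v\}\cup N(v)$ inducing a $K_4$, an edge of that $K_4$ serves instead as a deletion candidate for case (i). Writing $N(v)=\{a,b,c\}$ and $G_{xy}=(G-v)+xy$, I would analyse which of the at most three candidate non-edges $xy\in\{ab,bc,ca\}$ yield a $3$-connected redundantly rigid $G_{xy}$, separately for the two properties, and then show the two admissible families intersect. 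For rigidity: since $G$ is rigid, $G-v$ is either rigid or has exactly one internal degree of freedom in the $2$-dimensional rigidity matroid; in the latter case its generic internal flex must change at least one of the pairwise distances among $a,b,c$ (otherwise $a,b,c$, and then $v$, would move rigidly, contradicting rigidity of $G$), and such a pair is automatically a non-edge of $G-v$, so adding it restores rigidity of $G_{xy}$. Upgrading this to \emph{redundant} rigidity — and pinning down which candidates work — uses the circuit structure of $\mathcal{R}(G)$: every edge of $G$ lies in a rigidity circuit, and an appropriate choice of $xy$ keeps every remaining edge inside a circuit of $\mathcal{R}(G_{xy})$; this is the matroidal content of Connelly's Theorem~\ref{thm:connelly} run in reverse, which I would organise through an ear decomposition of the relevant connected submatroid in the style of Berg and Jord{\'a}n. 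For connectivity: if $G-v$ is already $3$-connected then every $G_{xy}$ is $3$-connected; otherwise any $2$-vertex-cut $\{x,y\}$ of $G-v$ must, as $G$ is $3$-connected, leave $v$ with neighbours on two of its sides, whence $G_{xy}$ is $3$-connected. A counting argument over the at most three candidates then forces the connectivity-admissible and rigidity-admissible sets to overlap.

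The main obstacle is precisely this last reconciliation: rigidity admissibility is governed by the circuits and $M$-components of $\mathcal{R}(G)$, connectivity admissibility by the $2$-separations of $G-v$, and a priori these structures have nothing to do with each other, so ruling out the possibility that the two admissible sets are ``complementary'' among the handful of candidates demands a careful case analysis — on the number and type of $2$-cuts of $G-v$, and on the way $\mathcal{R}(G)$ decomposes into $M$-components, in particular on whether $G$ itself is $M$-connected. Controlling the case in which $G$ is $3$-connected and redundantly rigid but not $M$-connected, by exploiting how narrowly its $M$-components may overlap inside a $3$-connected graph to produce either a deletable edge (case (i)) or a reducible degree-$3$ vertex (case (ii)), is the delicate core of the argument. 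Once the statement is established, combining it with Theorem~\ref{thm:connelly} and the trivial fact that edge additions preserve global rigidity, together with global rigidity of $K_4$, recovers the sufficiency half of the Jackson--Jord{\'a}n characterization of globally rigid graphs in $\mathbb{R}^2$.
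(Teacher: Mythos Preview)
The paper does not prove Theorem~\ref{thm:jj}; it is only quoted as a result of Jackson and Jord{\'a}n~\cite{JJ05}. The paper's contribution in Section~4.3 is precisely to \emph{avoid} the full strength of Theorem~\ref{thm:jj} when deriving Theorem~\ref{thm:global-comb}. It proves instead the weaker Lemma~\ref{lem:vertex3-edge}: a 3-connected redundantly rigid graph in $\mathbb{R}^2$ has either a vertex of degree three or an edge $e$ with $G-e$ still 3-connected and redundantly rigid. This suffices because the paper's Lemma~\ref{lem:1extension-extension} handles any degree-three vertex directly --- it only needs $G-v$ rigid and $G-v+K(N_G(v))$ globally rigid, not that some specific inverse 1-extension $(G-v)+xy$ be 3-connected and redundantly rigid. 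So there is no proof of Theorem~\ref{thm:jj} in the paper to compare your proposal against; the paper deliberately trades that theorem for a shorter combinatorial lemma plus a stronger geometric tool.

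Your sketch follows the original Jackson--Jord{\'a}n strategy, and the difficulty you single out --- reconciling the rigidity-admissible and the connectivity-admissible choices of the replacement edge $xy$ --- is indeed what makes their argument substantial. One correction, however: the case you call the ``delicate core'', namely $G$ 3-connected and redundantly rigid but not $M$-connected, is vacuous. By Theorem~\ref{thm:3-connected_M_connected} (i.e.\ \cite[Theorem~3.2]{JJ05}) these two properties coincide for 3-connected graphs, so $G$ is automatically $M$-connected throughout the induction. The genuine work in \cite{JJ05} is the case analysis on where a chosen degree-three vertex sits relative to 2-separators of $G-v$ and to an ear decomposition of ${\cal R}_2(G)$; your outline gestures at this but does not carry it out, so as written it is a plan rather than a proof. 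The paper's route via Lemma~\ref{lem:vertex3-edge} is shorter precisely because Lemma~\ref{lem:1extension-extension} removes the need for that reconciliation, at the cost of invoking Theorem~\ref{thm:generic}.
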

Combining these results, we have: 
\begin{theorem}[Connelly~\cite{connelly2005}, Jackson and Jord{\'a}n~\cite{JJ05}]
\label{thm:global-comb}
A graph $G$ is globally rigid in $\mathbb{R}^2$ 
if and only if $G$ is a complete graph on at most three vertices or $G$ is 3-connected and redundantly rigid in $\mathbb{R}^2$.
\end{theorem}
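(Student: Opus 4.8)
The plan is to assemble the ingredients already quoted, using Theorem~\ref{thm:generic} to move freely between the graph-level notion of global rigidity and the existence of a globally rigid generic realization. The forward direction is Hendrickson's condition; the backward direction combines the construction of Theorem~\ref{thm:jj} with the fact that the operations appearing there preserve global rigidity.

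For necessity, suppose $G$ is globally rigid in $\mathbb{R}^2$. By definition (which is legitimate thanks to Theorem~\ref{thm:generic}) every generic realization $(G,\bp)$ in $\mathbb{R}^2$ is globally rigid, so Theorem~\ref{thm:hendrickson} with $d=2$ applies directly and gives that $G$ is a complete graph on at most three vertices, or $G$ is $3$-connected and redundantly rigid in $\mathbb{R}^2$. For sufficiency, first dispose of the small cases: $K_1$, $K_2$, and $K_3$ are globally rigid in $\mathbb{R}^2$ (a point, a single prescribed distance, and a triangle determined up to congruence by its three side lengths, respectively). If instead $G$ is $3$-connected and redundantly rigid in $\mathbb{R}^2$, invoke Theorem~\ref{thm:jj} to obtain a sequence of graphs $K_4=G_0,G_1,\dots,G_k=G$ in which each $G_{i+1}$ is obtained from $G_i$ by a $1$-extension or by an edge addition. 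I would then induct along this sequence. The base case is that $K_4$ is globally rigid in $\mathbb{R}^2$, which follows from Connelly's stress-matrix criterion (a generic planar $K_4$ carries a nonzero equilibrium stress whose stress matrix has rank $4-2-1=1$) or may be checked by hand. Each $1$-extension step preserves global rigidity in $\mathbb{R}^2$ by Theorem~\ref{thm:connelly}. Each edge-addition step preserves it too: if $(G_i,\bp)$ is generic and globally rigid and $G_{i+1}=G_i+e$, then $(G_{i+1},\bp)$ is still generic, and any $\bq$ with $(G_{i+1},\bp)\simeq(G_{i+1},\bq)$ satisfies $(G_i,\bp)\simeq(G_i,\bq)$ since $E(G_i)\subseteq E(G_{i+1})$, hence $\bq\equiv\bp$; thus $(G_{i+1},\bp)$, and therefore $G_{i+1}$, is globally rigid. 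Following the sequence to its end shows $G$ is globally rigid in $\mathbb{R}^2$.

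There is essentially no obstacle in this argument once Theorems~\ref{thm:connelly} and~\ref{thm:jj} are granted: the entire substance of the result resides in those two statements, and what remains is the bookkeeping above. The only points demanding even a line of justification are the global rigidity of $K_4$ in $\mathbb{R}^2$ and the monotonicity of global rigidity under edge additions, both of which are routine and were carried out in the previous paragraph.
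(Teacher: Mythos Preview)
Your argument is correct and is precisely the classical derivation the paper alludes to in the introduction when it writes ``Combining these results, we have.''  However, the paper's own written-out proof (Section~4.3) is deliberately different, and that difference is the point of the section.  Instead of invoking the full constructive characterization of Theorem~\ref{thm:jj}, the paper proves and uses the weaker Lemma~\ref{lem:vertex3-edge}: a 3-connected redundantly rigid graph either has a vertex of degree three or has an edge whose deletion preserves both 3-connectivity and redundant rigidity.  The induction then goes on $(|V|,|E|)$: in the edge case one applies induction to $G-e$; in the degree-three-vertex case one observes that $G-v$ is rigid and $G-v+K(N_G(v))$ is again 3-connected and redundantly rigid, and appeals to the Vertex Removal Lemma (Lemma~\ref{lem:1extension-extension}) rather than to Theorem~\ref{thm:connelly}.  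What this buys is that one never needs the hard part of Theorem~\ref{thm:jj}---namely, that an \emph{inverse} 1-extension exists keeping the graph in the right class---and the required combinatorics (Lemma~\ref{lem:vertex3-edge}) is established from earlier, more elementary observations of Berg--Jord\'an and Jackson--Jord\'an.  Your route is shorter only because you are citing Theorem~\ref{thm:jj} as a black box; the paper's route shows that a strictly weaker combinatorial input suffices once Lemma~\ref{lem:1extension-extension} is available.
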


Hendrickson~\cite{H92} conjectured the converse of Theorem~\ref{thm:hendrickson},
and Theorem~\ref{thm:global-comb} confirms Hendrickson's conjecture in $d=2$.
On the other hand Connelly pointed out that the conjecture is false for $d\geq 3$.
In this paper we prove that a slightly stronger condition implies the global rigidity.
Namely we show that, if $G$ is {\em vertex-redundantly} rigid in $\mathbb{R}^d$, then $G$ 
is globally rigid in $\mathbb{R}^d$, where $G$ is said to be vertex-redundantly rigid if 
$G-v$ is rigid for all $v\in V(G)$.

%

More generally, in this paper, we shall investigate 
how to find globally rigid generic realizations of graphs in $\mathbb{R}^d$
based on elementary geometric observations.
Our arguments lead to new proofs of combinatorial characterizations of 
the global rigidity of graphs in $\mathbb{R}^2$ (Theorem~\ref{thm:jj}) 
and that of body-bar graphs in $\mathbb{R}^d$ recently shown by Connelly, Jord{\'a}n, and Whiteley~\cite{CJW} 
(Theorem~\ref{thm:body-bar}).
We also give extensions of the 1-extension theorem (Theorem~\ref{thm:connelly})
and Connelly's composition theorem~\cite{connelly2011combining}, 
which are main tools for generating globally rigid graphs in $\mathbb{R}^d$,
and prove a conjecture by Frank and Jiang~\cite[Conjecture 29]{frank2011} on the global rigidity of $k$-chains.

Before closing the introduction, we should emphasize that 
Theorems~\ref{thm:connelly},~\ref{thm:jj},~\ref{thm:global-comb} were 
proved before the work of Gortler et al.~(Theorem~\ref{thm:generic}).
In particular, the result by Connelly~\cite{connelly2005}, Jackson and Jord{\'a}n~\cite{JJ05} 
implies not only the existence of a globally rigid generic realization of 
a 3-connected redundantly rigid graph but also Theorem~\ref{thm:generic} for $d=2$.
Later, Jackson, Jord{\'a}n and Szabadka~\cite{JJS06} also gave 
a proof of Theorem~\ref{thm:connelly} for $d=2$ without using stress matrices.
In this sense, their results lead to a proof of Theorem~\ref{thm:generic} 
which relies on deep graph-theoretical observations, which is still important 
as the proof of Theorem~\ref{thm:generic} in \cite{gortler2010} 
mainly consists of  geometric/algebraic observations.

However, toward a combinatorial characterization of globally rigid graphs in $\mathbb{R}^3$,
we may now turn our attention to finding {\em a} globally rigid generic realization of a graph
with the aid of Theorem~\ref{thm:generic}.
This paper shows that there are several simple techniques  to find a generic  globally rigid realization,
which lead to much simpler arguments for proving a sufficiency of the global rigidity of graphs.


The paper is organized as follows.
In Section~2 we give preliminary facts on rigidity.
In Section~3, we shall show how to find a globally rigid generic realization of
a body-bar graph.
In Section~4, we shall discuss generalizations of the 1-extension theorem and the composition theorem and give applications.
In Section~5, we shall discuss another application of a result in Section~4 
to the global rigidity of body-hinge frameworks, 
which are important special cases of bar-joint frameworks which appear in many applications.

\section{Preliminaries}

For a graph $G=(V,E)$, we consider a smooth function $f_G:\mathbb{R}^{dV}\rightarrow \mathbb{R}^E$ defined by
\begin{equation}
 \label{eq:rigidity-map}
f_G(\bq)(uv)=\|q_u-q_v\|^2 \qquad (\bq\in \mathbb{R}^{dV}, uv\in E).
\end{equation}
The function is known as the {\em rigidity map} of $G$.
The Jacobian of $f_G$ at $\bp\in \mathbb{R}^{dV}$ is called the {\em rigidity matrix} of $(G,\bp)$, denoted by 
$R(G,\bp)$.

A vector in the kernel of $R(G,\bp)$ can be considered as an assignment $\bmm:V\rightarrow \mathbb{R}^d$, which is called an {\em infinitesimal motion} of $(G,\bp)$. 
It turns out that, for any $d\times d$ skew-symmetric matrix $S$ and $t\in \mathbb{R}^d$, an assignment $\bmm:V\rightarrow \mathbb{R}^d$ defined by $\bmm(v)=Sp_v+t$ is an infinitesimal motion of $(G,\bp)$.
Such an infinitesimal motion is called {\em trivial}.
We say that $(G,\bp)$ is {\em infinitesimally rigid} if any infinitesimal motion of $(G,\bp)$ is trivial.
If $\bp$ is generic,  $(G,\bp)$ is rigid if and only if $(G,\bp)$ is infinitesimally rigid~\cite{asimov1978}.

Note that there is a one-to-one correspondence between the set of rows of $R(G,\bp)$ and $E$.
Hence, by using the row independence, one can define a matroid on $E$, which is called 
the {\em rigidity matroid} of $(G,\bp)$.
Since the rank of $R(G,\bp)$ is invariant from $\bp$ as long as $\bp$ is generic, 
the rigidity matroid of $(G,\bp)$ is the same as that of $(G,\bq)$ for any generic $\bq$.
Thus we define the {\em generic rigidity matroid} ${\cal R}_d(G)$ as 
the rigidity matroid of $(G,\bp)$ for any generic $\bp\in \mathbb{R}^{dV}$.
A graph $G=(V,E)$ is rigid if and only if $G$ is a complete graph or 
the rank of ${\cal R}_d(G)$ is equal to $d|V|-{d+1\choose 2}$.

A graph $G=(V,E)$ is called a {\em rigidity circuit} in $\mathbb{R}^d$ if $E$ is a circuit in ${\cal R}_d(G)$.
By Laman's theorem, $G$ is a rigidity circuit in $\mathbb{R}^2$ if and only if 
$|E|=2|V|-2$ and $|E(H)|\leq 2|V(H)|-3$ for any proper subgraph $H$ of $G$.

Let $[\bq]$ be the equivalence class of $\bq$ in $\mathbb{R}^{dV}$ defined by the congruence $\equiv$.
For a framework $(G,\bp)$, we denote by $c(G,\bp)$ the number of distinct classes $[\bq]$ with 
$(G,\bq)\simeq (G,\bp)$.
The following elementary observation is a folklore, which turns out to be a key to the subsequent arguments.
See, e.g.,~\cite{gortler2010, JO12, szabadka} for more detailed discussion.
\begin{proposition}
\label{prop:key}
Let $(G,\bp)$ be a generic realization of a graph $G$.
If $(G,\bp)$ is rigid, then $c(G,\bp)$ is finite. 
\end{proposition}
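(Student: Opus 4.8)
The plan is to recast the finiteness of $c(G,\bp)$ as a statement about orbits of the Euclidean group acting on a fibre of the rigidity map, and to control the number of such orbits by a dimension count. First, if $|V(G)|\le d$ then a rigid $G$ is necessarily complete, so equivalence is the same as congruence and $c(G,\bp)=1$; hence I may assume $|V(G)|\ge d+1$. In that case rigidity of $G$ together with genericity of $\bp$ gives, by~\cite{asimov1978}, that $(G,\bp)$ is infinitesimally rigid, and since a generic configuration on at least $d+1$ vertices affinely spans $\mathbb{R}^d$ the space of trivial infinitesimal motions has dimension $\binom{d+1}{2}$; thus $\rank R(G,\bp)=d|V|-\binom{d+1}{2}$. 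Since $R(G,\bp)$ is the Jacobian of the polynomial map $f_G$ at the generic point $\bp$, this rank equals $\dim Y$, where $Y:=\overline{\im f_G}$ is the complex Zariski closure of the image; moreover $f_G(\bp)$ is a generic point of $Y$, because any polynomial over $\mathbb{Q}$ vanishing at $f_G(\bp)$ pulls back to one vanishing on all of $\mathbb{R}^{dV}$, hence on $Y$. Regarding $f_G\colon\mathbb{C}^{dV}\to Y$ as a dominant morphism, the fibre $V_{\bp}:=f_G^{-1}(f_G(\bp))$ over this generic point has dimension $d|V|-\dim Y=\binom{d+1}{2}$, so $V_{\bp}$ has finitely many irreducible components, each of dimension at most $\binom{d+1}{2}$.

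The geometric heart of the proof is that every configuration in $V_{\bp}$ is affinely spanning. The Zariski closure of the image of the $(d-1)$-dimensional rigidity map has dimension at most $(d-1)|V|-\binom{d}{2}$, which is strictly smaller than $d|V|-\binom{d+1}{2}=\dim Y$ precisely because $|V|\ge d+1$; since a realization inside a $(d-1)$-flat has the same squared edge lengths as a realization in $\mathbb{R}^d$, that closure is a \emph{proper} subvariety of $Y$ defined over $\mathbb{Q}$, and so the generic point $f_G(\bp)$ avoids it. Hence the edge lengths of $(G,\bp)$ admit no realization in a $(d-1)$-dimensional affine subspace, so every $\bq\in V_{\bp}$ spans affinely. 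Consequently the connected complex group $\Gamma=SO(d,\mathbb{C})\ltimes\mathbb{C}^{d}$, of dimension $\binom{d+1}{2}$, acts on $V_{\bp}$ with trivial stabilizer at every point (an isometry fixing an affinely spanning set is the identity), so each $\Gamma$-orbit in $V_{\bp}$ has dimension exactly $\binom{d+1}{2}$. Being connected, $\Gamma$ preserves each irreducible component $Z$ of $V_{\bp}$; an orbit inside $Z$ has dimension $\binom{d+1}{2}\ge\dim Z$, hence its Zariski closure is all of $Z$, and since distinct orbits are disjoint while two such full-dimensional orbits would each be dense in the irreducible set $Z$, the component $Z$ is a single $\Gamma$-orbit. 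Thus $V_{\bp}$ is a disjoint union of finitely many $\Gamma$-orbits.

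Finally I would descend to $\mathbb{R}$. Any real $\bq$ with $(G,\bq)\simeq(G,\bp)$ lies in $V_{\bp}$, hence in one of these finitely many orbits; and if two real configurations lie in one $\Gamma$-orbit they are related by a complex isometry, so the complex bilinear form — which restricts to the Euclidean inner product on real points — gives $\|q_u-q_v\|=\|q'_u-q'_v\|$ for all pairs $u,v$, i.e.\ $\bq\equiv\bq'$. Hence $c(G,\bp)$ is at most the number of irreducible components of $V_{\bp}$, which is finite. The step I expect to be the main obstacle is the one genuinely special to rigidity rather than generic-position bookkeeping: ruling out irreducible components of $V_{\bp}$ that consist entirely of degenerate, non-spanning configurations, where orbits would have smaller dimension and could a priori sweep out a positive-dimensional family of congruence classes. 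This is exactly what the comparison between $\dim Y$ and the dimension of the image of the $(d-1)$-dimensional rigidity map is designed to exclude, and it is also the point at which the hypothesis that $(G,\bp)$ is \emph{generic}, rather than merely rigid, is used essentially.
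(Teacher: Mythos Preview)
The paper does not supply its own proof of this proposition; it states the result as folklore and refers the reader to \cite{gortler2010, JO12, szabadka}. Your argument is essentially the standard algebraic-geometric one developed in those references (particularly \cite{gortler2010} and \cite{JO12}): pass to the complex fibre of $f_G$ over the generic value $f_G(\bp)$, use the generic-fibre-dimension theorem to see each irreducible component has dimension exactly $\binom{d+1}{2}$, and observe that the connected group $SO(d,\mathbb{C})\ltimes\mathbb{C}^d$ acts freely, forcing each component to be a single complex orbit and hence to contain at most one real congruence class. So there is nothing to compare against, and your route is the expected one.

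Your proof is correct. The only place a reader might want one more sentence is exactly the step you flagged: that every \emph{complex} $\bq\in V_{\bp}$ affinely spans $\mathbb{C}^d$. Your dimension comparison with the image $Y_{d-1}$ of the $(d-1)$-dimensional rigidity map handles real non-spanning configurations immediately, but over $\mathbb{C}$ one cannot always move a $(d-1)$-flat to $\mathbb{C}^{d-1}\times\{0\}$ by a complex isometry, since isotropic directions obstruct this. The conclusion still holds, however: if $\bq$ lies in an affine $(d-1)$-flat $a_0+W$, the complex bilinear form restricted to $W$ has some rank $r\le d-1$ and can be diagonalized there, so $f_G(\bq)$ coincides with the value of the $r$-dimensional rigidity map on a configuration in $\mathbb{C}^{rV}$; hence $f_G(\bq)\in Y_r\subseteq Y_{d-1}$, and the generic point $f_G(\bp)$ avoids this proper $\mathbb{Q}$-subvariety as you argued. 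With that remark the argument is complete.
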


Throughout the paper we shall use the following notation.
For a point $p\in \mathbb{R}^d$ and $\epsilon\in \mathbb{R}$, 
let $B(p,\epsilon)=\{x\in\mathbb{R}^d\mid \|p-x\|< \epsilon\}$ and 
$\partial B(p,\epsilon)=\{x\in\mathbb{R}^d\mid \|p-x\|=\epsilon\}$.
 
For a graph $G=(V,E)$, let $N_G(v)\subseteq V\setminus \{v\}$ be the set of neighbors of a vertex $v$
and let $d_G(v)=|N_G(v)|$. 
For a finite set $X$, let $K(X)$ be the edge set of the complete graph on $X$.
For disjoint finite sets $X$ and $Y$, let $K(X,Y)$ be the edge set of the complete bipartite graph on the vertex classes $X$ and $Y$.
 
A {\em k-separation} of a graph $G$ is a pair $(G_1,G_2)$ of edge-disjoint subgraphs of $G$ 
each with at least $k+1$ vertices such that $G=G_1\cup G_2$ and $|V(G_2)\cap V(G_2)|=k$.
A graph $G$ is said to be $k$-connected if it has at least $k+1$ vertices and has no $j$-separation
for all $0\leq j\leq k-1$. 
For a connected graph $G=(V,E)$, 
a vertex set $X\subseteq V$ with $|X|=k$ is called a {\em k-separator} if 
$G-X$ is disconnected.

\section{Global Rigidity of Body-bar Frameworks}
A body-bar framework is a structure consisting of rigid bodies linked by disjoint bars.
By regarding each body as a dense bar-joint framework, one can consider a body-bar framework 
as a special case of bar-joint framework.
The underlying graph of a body-bar framework is a multigraph obtained by identifying each body with a vertex 
and each bar with an edge.

\begin{figure}[t]
\centering
\begin{minipage}{0.4\textwidth}
\centering
\includegraphics[scale=0.5]{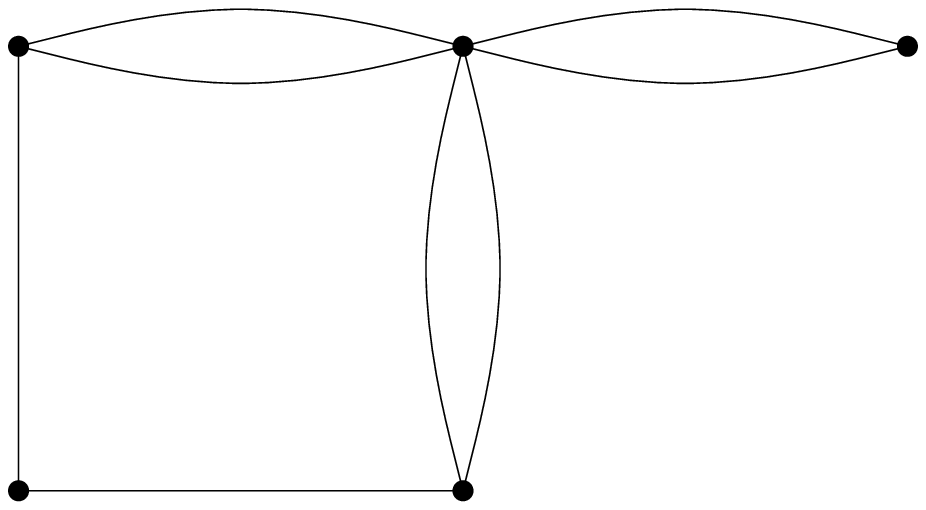}
\par
(a)
\end{minipage}
\begin{minipage}{0.4\textwidth}
\centering
\includegraphics[scale=0.5]{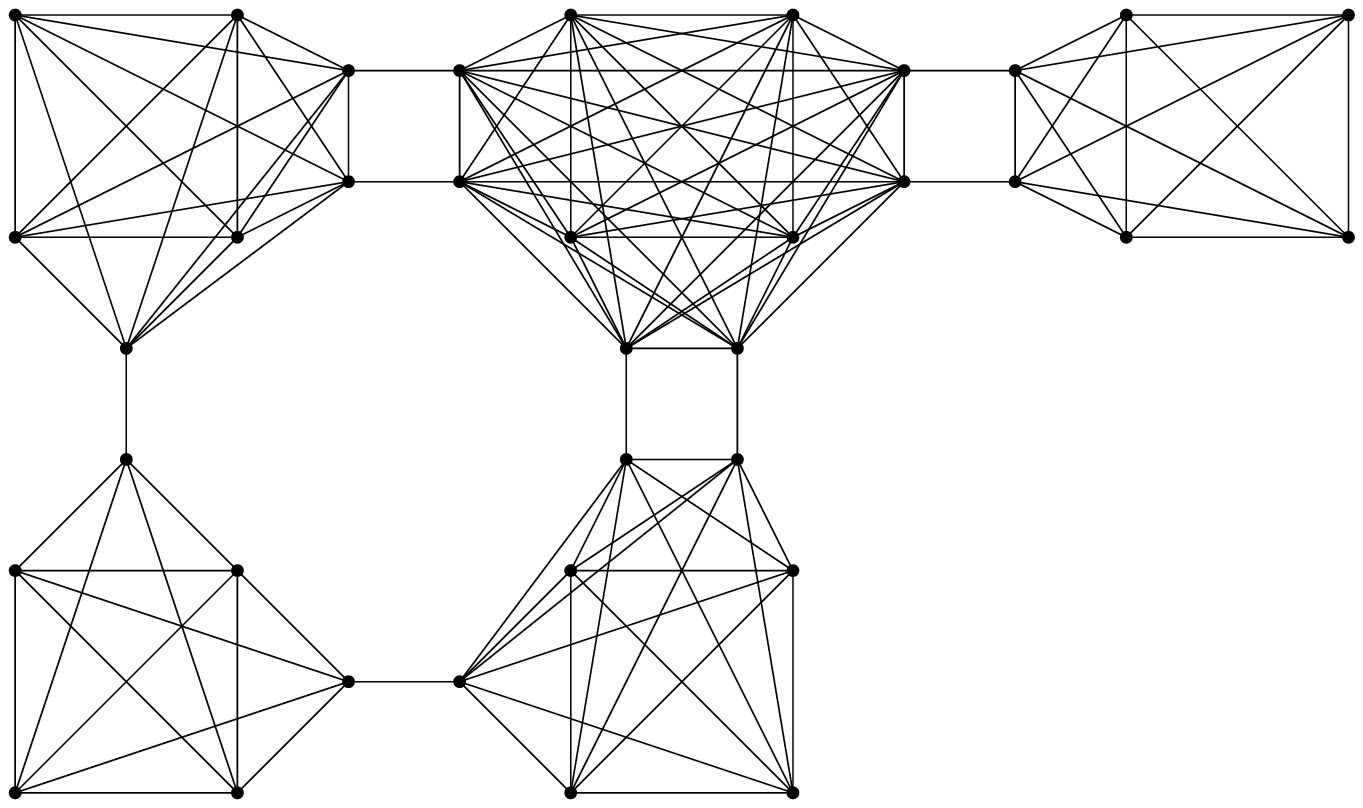}
\par
(b)
\end{minipage}
\caption{A graph $H$ and its body-bar graph $G_H$ for $d=3$.}
\label{fig:body-bar}
\end{figure}

Given a multigraph $H$ which is the underlying graph of a body-bar framework in $\mathbb{R}^d$,
the associated {\em body-bar graph} is defined by ``replacing'' each vertex $v$ by a complete subgraph
with $(d+1+d_H(v))$ vertices.
Formally, the {\em body-bar graph} of $H$ is a simple graph 
$G_H=(V_H,E_H)$, where 
\begin{itemize}
\item $V_H$ is the union of disjoint vertex sets $B_H^v$ for each $v\in V(H)$
which is defined by $B_H^v=\{v_1^0,\dots, v_{d+1}^0\}\cup \{v_e^1\mid e\in E(H) \text{ is incident to } v\}$;
\item $E_H=(\bigcup_{v\in V} K(B_H^v))\cup \{u_e^1v_e^1\mid e=uv\in E(H)\}$.
\end{itemize}
See Figure~\ref{fig:body-bar} for an example.

Note that $B_H^v$ induces a complete subgraph in $G_H$, which is called a body associated with $v$.
Each body forms a globally rigid subframework when realizing $G_H$ generically.
A bar-joint framework $(G_H,\bp)$ with  $\bp:V_H\rightarrow \mathbb{R}^d$ is called 
a  {\em body-bar realization} of $H$ in $\mathbb{R}^d$.
A celebrated theorem by Tay gives a combinatorial characterization of generically rigid realizability.
\begin{theorem}[Tay~\cite{tay:84}]
\label{thm:tay}
Let $H$ be a finite multigraph.
Then there is a rigid and generic body-bar realization of $H$ in $\mathbb{R}^d$ if and only if 
$H$ contains ${d+1\choose 2}$ edge-disjoint spanning trees.
\end{theorem}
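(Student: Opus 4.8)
The plan is to reduce body-bar infinitesimal rigidity to a rank computation for a ``generalized incidence matrix'' and then invoke the matroid union theorem (equivalently, the Tutte--Nash-Williams tree-packing theorem); this is essentially Tay's argument in the projective reformulation of White and Whiteley. Write $k=\binom{d+1}{2}$ and $n=|V(H)|$; we may assume $n\ge 2$, so $G_H$ is not complete, and since rigidity and infinitesimal rigidity coincide at generic configurations (as recalled in Section~2) it suffices to work with infinitesimal rigidity at a generic $\bp$. First I would analyze the infinitesimal motions of $(G_H,\bp)$. Each body $B_H^v$ spans a complete graph on at least $d+1$ generically placed points and is therefore infinitesimally rigid, so every infinitesimal motion $\bmm$ of $(G_H,\bp)$ restricts on $B_H^v$ to an infinitesimal isometry $x\mapsto S_vx+t_v$; encode this by the ``screw'' $\xi_v=(S_v,t_v)\in\wedge^2\mathbb{R}^d\oplus\mathbb{R}^d\cong\mathbb{R}^k$. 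A short computation shows that the only remaining constraint, coming from the bar edge $u_e^1v_e^1$ of an edge $e=uv\in E(H)$, is $\langle\zeta_e,\ \xi_u-\xi_v\rangle=0$, where, writing $a=u_e^1$, $b=v_e^1$, one has $\zeta_e=(p_a\wedge p_b,\ p_a-p_b)$; in homogeneous coordinates $\zeta_e=\hat p_a\wedge\hat p_b$ with $\hat p_a=(p_a,1)\in\mathbb{R}^{d+1}$, i.e. $\zeta_e$ is the Pl\"ucker coordinate vector of the line of the bar. Hence the infinitesimal motions form exactly the kernel of the matrix $C(\bp)$ having one row per edge $e=uv$, equal to $\zeta_e$ in the block of $u$, to $-\zeta_e$ in the block of $v$, and $0$ elsewhere; the diagonal copy of $\mathbb{R}^k$ always lies in this kernel, and $(G_H,\bp)$ is infinitesimally rigid if and only if $\rank C(\bp)=k(n-1)$.

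For necessity, note that $C(\bp)$ is obtained from the oriented incidence matrix of $H$ by replacing the nonzero entry of row $e$ with the length-$k$ vector $\pm\zeta_e$. For any choice of vectors $\zeta_e\in\mathbb{R}^k$ the row matroid of such a matrix is a weak-map image of the one obtained from algebraically independent $\zeta_e$, and the latter is $\bigvee^k M(H)$, the $k$-fold matroid union of the cycle matroid $M(H)$ of $H$ (a classical consequence of the matroid union theorem: a set of edges is independent there iff it decomposes into $k$ forests of $H$). Therefore $\rank\bigl(\bigvee^k M(H)\bigr)\ge\rank C(\bp)=k(n-1)$, and since each copy of $M(H)$ has rank at most $n-1$, equality holds; so $\bigvee^k M(H)$ has an independent set of size $k(n-1)$, which splits into $k$ forests each necessarily a spanning tree. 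This yields the $k$ edge-disjoint spanning trees.

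For sufficiency, let $T_1,\dots,T_k$ be edge-disjoint spanning trees of $H$; deleting surplus edges, assume they partition $E(H)$. Since decomposable $2$-vectors (Pl\"ucker coordinates of lines in $\mathbb{P}^d$) span $\wedge^2\mathbb{R}^{d+1}$, I may pick lines $L_1,\dots,L_k$ whose Pl\"ucker vectors $\pi_1,\dots,\pi_k$ form a basis of $\mathbb{R}^k$, and after a linear change of coordinates inside each block we may take $\pi_i={\bm e}_i$. Let $C_0$ be the matrix built as above when every edge $e\in T_i$ is assigned $\zeta_e=\pi_i$. Then $C_0$ block-decomposes as the direct sum over $i=1,\dots,k$ of the oriented incidence matrix of the spanning tree $T_i$ placed in the $i$-th coordinate of every block, so $\rank C_0=\sum_{i=1}^k(n-1)=k(n-1)$. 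Now assign to the edges of $T_i$ distinct lines close to $L_i$, realized by an actual (generic) placement $\bp$ of the points $p_{u_e^1}$; the base vertices $v_i^0$ may be placed arbitrarily so that every body stays infinitesimally rigid. By lower semicontinuity of rank, $\rank C(\bp)\ge\rank C_0=k(n-1)$, and the reverse inequality always holds, so $\rank C(\bp)=k(n-1)$: $(G_H,\bp)$ is infinitesimally rigid for this $\bp$, hence for a generic one, hence $G_H$ is rigid.

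The main obstacle is the sufficiency direction, and within it two points need care: proving that Pl\"ucker coordinates of lines span $\wedge^2\mathbb{R}^{d+1}$ (true, but for $d\ge 3$ they do \emph{not} fill that space, so one genuinely needs the limiting block-matrix argument rather than simply choosing generic row vectors $\zeta_e$), and verifying that the perturbed configuration really is an admissible body-bar realization whose rank can be transported to a generic $\bp$. The identification in the first step of the rows of $C(\bp)$ with line Pl\"ucker coordinates, together with the matroid-union computation in the second step, is the conceptual heart; the rest is bookkeeping.
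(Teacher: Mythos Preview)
The paper does not prove this theorem: Theorem~\ref{thm:tay} is stated as Tay's result and used as a black box, with no proof supplied. So there is no ``paper's own proof'' against which to compare your proposal.

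Your outline is essentially Tay's original argument in the projective (Pl\"ucker--coordinate) reformulation of White and Whiteley, and the logic is sound. The reduction to the screw matrix $C(\bp)$, the weak-map inequality $\rank C(\bp)\le\rank\bigl(\bigvee^{k}M(H)\bigr)$ for necessity, and the block-diagonal specialization for sufficiency are all correct. Two places could be tightened. In the sufficiency step the perturbation is not really needed: the degenerate assignment (all bars of $T_i$ lying on the fixed line $L_i$) is already realized by an honest configuration $\bp_0$---put the two endpoints of each bar at distinct points of $L_i$ and place the auxiliary body vertices $v_j^0$ so that every $B_H^v$ affinely spans $\mathbb{R}^d$---so $\rank C(\bp_0)=k(n-1)$ directly, and the generic rank is at least this since the entries of $C$ are polynomial in $\bp$. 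In the necessity step, the assertion that the generic-$\zeta$ row matroid equals $\bigvee^{k}M(H)$ is correct but is itself the substantive ingredient (it encodes Nash--Williams via the count $|F'|\le k(|V(F')|-1)$ on every induced subset); in a written proof it deserves a citation or a one-line justification rather than the phrase ``classical consequence''.
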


Connelly, Jord{\'a}n and Whiteley~\cite{CJW} recently proved the global rigidity counterpart of Tay's theorem,
which confirms that Hendrickson's conjecture is true in this model. 
\begin{theorem}[Connelly, Jord{\'a}n and Whiteley~\cite{CJW}]
\label{thm:body-bar}
Let $H$ be a multigraph.
Then there is a globally rigid and generic body-bar realization of $H$ in $\mathbb{R}^d$ if and only if
there is a rigid and generic body-bar realization of $H-e$ in $\mathbb{R}^d$ for every $e\in E(H)$.
\end{theorem}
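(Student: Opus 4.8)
The ``only if'' direction is short. If $(G_H,\bp)$ is a generic globally rigid realization, then by Hendrickson's theorem (Theorem~\ref{thm:hendrickson}) either $G_H$ is complete on at most $d+1$ vertices — which forces $|V(H)|=1$ and leaves nothing to prove — or $G_H$ is redundantly rigid. In the latter case $G_H-u^1_ev^1_e$ is rigid for every bar $u^1_ev^1_e$; since $u^1_e$ and $v^1_e$ are joined to all of $B^u_{H-e}$, resp.\ $B^v_{H-e}$, which induce complete subgraphs on at least $d+1$ generic points, deleting these two vertices preserves (infinitesimal) rigidity, so $G_{H-e}$ is rigid, i.e.\ $H-e$ has a rigid generic body-bar realization. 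For the converse, by Theorem~\ref{thm:generic} it suffices to produce a \emph{single} generic $\bp$ with $(G_H,\bp)$ globally rigid, and by Tay's theorem (Theorem~\ref{thm:tay}) the hypothesis is equivalent to saying that $H-e$ contains $\binom{d+1}{2}$ edge-disjoint spanning trees for every $e\in E(H)$ — equivalently, $E(H)$ is a spanning, coloop-free set of the matroid union of $\binom{d+1}{2}$ copies of the cycle matroid of $H$. In particular $H$ is $2$-edge-connected once $E(H)\neq\emptyset$; the cases $|V(H)|=1$ and $E(H)=\emptyset$ being trivial, assume $H$ is connected with at least two vertices.

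\textbf{The cheap reduction: adding edges is free.} The first observation is that at the level of body-bar graphs an edge of $H$ costs nothing. If $H''=H'+e$ with $e=uv$, then $G_{H''}$ is obtained from $G_{H'}$ by adjoining two new vertices $u^1_e,v^1_e$, joining $u^1_e$ to $B^u_{H'}$ and $v^1_e$ to $B^v_{H'}$ (each a complete graph on at least $d+1$ vertices), together with the single bar $u^1_ev^1_e$. Realize $u^1_e,v^1_e$ at points whose coordinates are fresh algebraically independent numbers. In any realization equivalent to this one, the restriction to $G_{H'}$ is congruent to $\bp$, hence equal to $\bp$ there after a global isometry; then $u^1_e$ has prescribed distances to at least $d+1$ affinely independent points and so is uniquely located, likewise $v^1_e$, and the extra bar can only help. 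Thus $(G_{H''},\cdot)$ is globally rigid whenever $(G_{H'},\cdot)$ is. Since, by the matroidal description above, one may delete edges of $H$ one at a time keeping the redundant tree-packing property until reaching an edge-minimal such graph, the whole sufficiency direction reduces to the case where $H$ is \emph{minimal} with the property — i.e.\ an $\mathcal{R}$-circuit of that matroid union.

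\textbf{The hard core.} For minimal $H$ I would use a Henneberg-type construction in the ``body-bar dimension'' $\binom{d+1}{2}$ — the analogue for the above matroid union of the Jackson--Jord\'an construction in Theorem~\ref{thm:jj} — expressing $H$ from the two-vertex multigraph with $\binom{d+1}{2}+1$ parallel edges by a sequence of parallel-edge additions (already handled) and $1$-extensions (delete an edge $xy$ of $H$, add a new vertex of degree $\binom{d+1}{2}+1$ adjacent to $x$ and $y$). It then remains to (i) establish this combinatorial construction, (ii) show the $1$-extension step preserves global rigidity of the associated body-bar graph, and (iii) settle the base case: two generic rigid bodies joined by $\binom{d+1}{2}+1$ generic bars form a globally rigid framework. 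Both (ii) and (iii) are instances of one principle: such a framework is rigid by Tay's theorem, hence by Proposition~\ref{prop:key} has only finitely many equivalent realizations up to congruence; since every body is globally rigid, each such realization is encoded by an isometry $\psi$ of $\mathbb{R}^d$ placing the newly attached body consistently with all $\binom{d+1}{2}+1$ bar lengths, so there are only finitely many candidate $\psi$, and one argues that, for generic $\bp$, the length forced by the ``extra'' bar already differs from the length it would have under any non-identity isometry solving the remaining $\binom{d+1}{2}$ length equations. I expect exactly this last point — pinning the finite, a priori large, set of equivalent realizations guaranteed by Proposition~\ref{prop:key} down to one via genericity, together with its $1$-extension version — to be the main obstacle; the combinatorial construction in (i) is, by contrast, a routine (if technical) matroid-union computation.
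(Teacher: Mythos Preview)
Your overall strategy --- reduce to a minimal redundant $H$, then build it up by Henneberg-type moves --- is closer in spirit to the original Connelly--Jord\'an--Whiteley proof (which goes through the Frank--Szeg\H{o} constructive characterization and stress matrices) than to the paper's. The paper bypasses all of this with a direct induction on $|V(H)|$: pick any edge $e=uv$; since $G_{H-e}$ is rigid, Proposition~\ref{prop:key} gives only finitely many equivalent realizations, hence only finitely many isometries $(A_{\bq'},t_{\bq'})$ that could place the body of $v$ relative to the (fixed) body of $u$; now choose the two bar-endpoints $p(u_e^1),p(v_e^1)$ algebraically independent over the entries of all these finitely many isometries, and the single bar-length equation kills every non-identity $(A_{\bq'},t_{\bq'})$. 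Thus bodies $u$ and $v$ are glued in every equivalent realization, and the problem reduces to $H/e$. No circuits, no Henneberg construction.

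Your step~(ii) has a genuine gap. The ``one principle'' you invoke says: $G_H$ is rigid, so it has finitely many equivalent realizations, and each is ``encoded by an isometry $\psi$ placing the newly attached body.'' But that encoding presupposes that the rest of the framework --- $G_{H-z}=G_{H'-xy}$ --- is already pinned down. In your base case~(iii) the rest is a single body, so this is fine and your argument works there. In~(ii), however, $H'$ is a circuit of the matroid union, so $H'-xy$ is only a basis: $G_{H'-xy}$ is rigid but typically \emph{not} globally rigid, and each of its non-congruent realizations carries its own finite family of candidate placements for the body of $z$. Your ``extra bar'' argument says nothing about why those other realizations of $G_{H'-xy}$ cannot extend to equivalent realizations of $G_H$. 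Closing this gap means showing that the $\binom{d+1}{2}+1$ bars from $z$ force the deleted bar corresponding to $xy$ to be globally implied --- essentially a body-bar version of Lemma~\ref{lem:1extension-extension} --- and that is precisely the content the paper's contraction trick delivers in one step. (Step~(i), incidentally, is the Frank--Szeg\H{o} theorem; calling it a routine matroid-union computation undersells it.)
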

\begin{proof}
The necessity follows from Hendrickson's theorem (Theorem~\ref{thm:hendrickson}) 
and our target is to prove the sufficiency.
The proof is done by induction on $|V(H)|$.
The statement is clear if $|V(H)|=1$, so assume $|V(H)|>1$.

Let us take any $e=uv\in E(H)$ and let $H'=H-e$.
Since $G_{H'}$ is rigid, by Proposition~\ref{prop:key} 
there is a generic and rigid body-bar realization $(G_{H'},\bp')$ such that 
$c(G_{H'},\bp')$ is finite.
Let us consider the set of equivalent realizations, that is,
\[
{\cal E}(G_{H'},\bp')=\{\bq':V_{H'}\rightarrow \mathbb{R}^d \mid (G_{H'},\bq')\simeq (G_{H'},\bp'), 
\forall w\in B_{H'}^u, \bq'(w)=\bp'(w)\}
\]
(where we restrict our attention to realizations fixing the body of $u$).
Then ${\cal E}(G_{H'},\bp')$ is a finite set.
Note that the body of $v$ is a complete subgraph, and hence 
for each $\bq'\in {\cal E}(G_{H'},\bp')$ 
there is an isometry
which can be written as a pair of a $d\times d$ orthogonal matrix $A_{\bq'}$
and $t_{\bq'}\in \mathbb{R}^d$ such that $\bq'(w)=A_{\bq'}\bp'(w)+t_{\bq'}$ for all $w\in B_{H'}^v$.

We now extend $\bp':V_{H'}\rightarrow \mathbb{R}^d$ to $\bp:V_H\rightarrow \mathbb{R}^d$ as follows.
Recall that $V_H\setminus V_{H'}=\{u_e^1, v_e^1\}$.
We take $\bp(u_e^1)$ and $\bp(v_e^1)$ in such a way that 
$\{\bp(u_e^1),\bp(v_e^1)\}$ is algebraically independent over 
the field generated by $\mathbb{Q}$ and the entries of $A_{\bq'}$ and $t_{\bq'}$ 
for all $\bq'\in {\cal E}(G_{H'},\bp')$.
Since ${\cal E}(G_{H'},\bp')$ is finite, such $\bp$ exists.

We prove that $(G_{H},\bp)$ is globally rigid in $\mathbb{R}^d$.
To see this let us consider 
\[
{\cal E}(G_{H},\bp)=\{\bq:V_{H}\rightarrow \mathbb{R}^d \mid (G_{H},\bq)\simeq (G_{H},\bp), 
\forall w\in B_{H}^u, \bq(w)=\bp(w)\}.
\]

Since $(G_{H'},\bp')$ is a subframework of $(G_H,\bp)$, 
for each $\bq\in {\cal E}(G_H,\bp)$ there is some $\bq'\in {\cal E}(G_{H'},\bp')$
such that $\bq(w)=A_{\bq'}\bp(w)+t_{\bq'}$ for all $w\in B_H^v$.
In particular, $\bq(v_e^1)=A_{\bq'}\bp(v_e^1)+t_{\bq'}$ holds by $v_e^1\in B_H^v$.
Moreover $\bp(u_e^1)=\bq(u_e^1)$ and $\|\bp(u_e^1)-\bp(v_e^1)\|=\|\bq(u_e^1)-\bq(v_e^1)\|$ 
due to the existence of edge $u_e^1v_e^1$.
Hence, 
\[
 \|\bp(u_e^1)-\bp(v_e^1)\|^2-\|\bp(u_e^1)-A_{\bq'}\bp(v_e^1)-t_{\bq'}\|^2=0.
\] 
If we regard the left hand side as a polynomial with variables $\bp(u_e^1)$ and $\bp(v_e^1)$,
then this polynomial is not identically zero unless $A_{\bq'}$ is the identity matrix and $t_{\bq'}=0$.
(For example, take $\bp(u_e^1)=\bp(v_e^1)=p$ for any $p\in \mathbb{R}^d$ with $(I-A_{\bq'})p\neq t_{\bq'}$.)
Thus, due to the choice of $\bp(u_e^1)$ and $\bp(v_e^1)$, 
$A_{\bq'}$ is the identity and $t_{\bq'}=0$.

Consequently, for any $\bq\in {\cal E}(G_H,\bp)$, 
we have $\bq(w)=\bp(w)$ for all $w\in B_H^u\cup B_H^v$,
and hence the subgraph induced by $B_H^u\cup B_H^v$ can be regarded as one body;
More precisely, there is a globally rigid generic body-bar realization of $H$ if and only if 
there is a globally rigid generic body-bar realization of $H^*$, where 
$H^*$ is the graph obtained from $H$ by contracting $u$ and $v$.
Clearly there is a generic rigid body-bar realization of $H^*-f$ for all $f\in E(H^*)$,
and hence there is a globally rigid generic body-bar realization of $H^*$ by induction.
This completes the proof.  
\end{proof}

\noindent{\em Remark.}
Theorem~\ref{thm:body-bar} along with  Theorem~\ref{thm:generic} and Theorem~\ref{thm:tay} implies 
a combinatorial characterization of 
the global rigidity of body-bar graphs, which is the main theorem of \cite{CJW}.
We should remark that the proof in \cite{CJW} is done by the evaluation of the rank of stress matrices 
by using a constructive characterization of Frank and Szeg{\H o}~\cite{frank}.
Thus their proof further implies Theorem~\ref{thm:generic} for body-bar graphs.

\noindent{\em Remark.}
The definition of body-bar graphs in \cite{CJW} is slightly different from the one given above.
In \cite{CJW} each vertex of $H$ is replaced by a complete graph on $d_H(v)$ vertices.
One can easily check that this distinction does not cause any difference of the global rigidity property.

\section{Global Rigidity of Bar-joint Frameworks}
In this section we shall discuss the global rigidity of generic bar-joint frameworks in $\mathbb{R}^d$.
\subsection{Vertex Removal Lemma}
The following lemma extends Theorem~\ref{thm:connelly}, which turns out to be a very powerful tool as shown in applications in subsequent sections.
\begin{lemma}
\label{lem:1extension-extension}
Let $G=(V,E)$ be a graph and $v$ be a vertex of degree more than $d$. Suppose that 
\begin{itemize}
\item $G-v$ is rigid in $\mathbb{R}^d$, and 
\item $G-v+K(N_G(v))$ is globally rigid in $\mathbb{R}^d$.
\end{itemize}
Then $G$ is globally rigid in $\mathbb{R}^d$.
\end{lemma}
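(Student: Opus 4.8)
The plan is to mimic the geometric argument used in the proof of Theorem~\ref{thm:body-bar}, realizing $G$ by first realizing the ``reduced'' graph $G':=G-v+K(N_G(v))$ generically and globally rigidly, then attaching $v$ back in a sufficiently generic way so that no incongruent realization of $G$ can survive. First I would pick a generic point $\bp'\in\mathbb{R}^{dV\setminus\{v\}}$ such that $(G',\bp')$ is globally rigid — this exists by Theorem~\ref{thm:generic} since $G'$ is globally rigid. Note that because $N_G(v)$ induces a complete graph in $G'$, the restriction $\bp'|_{N_G(v)}$ together with the bar lengths pins down the entire configuration of $N_G(v)$ up to congruence in any equivalent realization; in particular, since $(G',\bp')$ is globally rigid, any $\bq'$ with $(G',\bq')\simeq(G',\bp')$ satisfies $\bq'\equiv\bp'$, so after normalizing by an isometry we may assume the neighbors of $v$ sit in exactly the same positions as in $\bp'$.

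Next I would invoke Proposition~\ref{prop:key}: since $G-v$ is rigid and $\bp'$ is generic, $(G-v,\bp')$ has finitely many equivalence classes of equivalent realizations, hence finitely many congruence types for the neighbor set $N_G(v)$. Actually, working with $G'$ directly (which is rigid, being globally rigid) and fixing a reference subset of $d+1$ affinely independent points among the vertices, the set of realizations $\bq'$ equivalent to $\bp'$ and agreeing with $\bp'$ on that reference set is finite; each such $\bq'$ restricts to a placement of $N_G(v)$ that differs from $\bp'|_{N_G(v)}$ by one of finitely many isometries $(A_{\bq'},t_{\bq'})$. I would then choose $\bp(v)\in\mathbb{R}^d$ algebraically independent over the field generated by $\mathbb{Q}$, the entries of $\bp'$, and the entries of all these finitely many $A_{\bq'},t_{\bq'}$. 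The resulting $\bp$ on $V$ is generic, so by Theorem~\ref{thm:generic} it suffices to prove $(G,\bp)$ is globally rigid.

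For global rigidity: let $(G,\bq)\simeq(G,\bp)$. Restricting to $G-v\subseteq G'$ and normalizing by an isometry, we get $\bq|_{V\setminus\{v\}}$ equal to one of the finitely many equivalent realizations $\bq'$ of $(G-v,\bp')$ described above; moreover, since $G'$ is globally rigid, $\bq'$ is actually congruent to $\bp'$, so the positions of $N_G(v)$ in $\bq$ are $A_{\bq'}\bp'(w)+t_{\bq'}$ for $w\in N_G(v)$, for some isometry $(A_{\bq'},t_{\bq'})$ — and we may normalize so that one copy is the identity, but other embeddings of $G-v$ inside the full configuration could a priori present $N_G(v)$ via a nontrivial $(A_{\bq'},t_{\bq'})$. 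Wait — here is the crux. Since $v$ has degree $>d$, it has at least $d+1$ neighbors, so the edge-length constraints $\|q_v-q_w\|=\|p_v-p_w\|$ for $w\in N_G(v)$ force, treating $\bp(v)$ as a variable, the polynomial $\sum_{w}\big(\|\bp(v)-\bp'(w)\|^2-\|\bp(v)-A_{\bq'}\bp'(w)-t_{\bq'}\|^2\big)$-type identities to hold; this polynomial (or an appropriate subset of the individual ones) is not identically zero in $\bp(v)$ unless $(A_{\bq'},t_{\bq'})$ is trivial, and by the algebraic-independence choice of $\bp(v)$ it cannot vanish. Hence $(A_{\bq'},t_{\bq'})$ is the identity, so $\bq$ agrees with $\bp$ on $V\setminus\{v\}$, and then $\bq(v)$ lies in the intersection of $d+1$ spheres $\partial B(\bp(w),\|\bp(v)-\bp(w)\|)$ whose centers $\bp(w)$, $w\in N_G(v)$, include $d+1$ affinely independent points (by genericity of $\bp'$); such an intersection is a single point, namely $\bp(v)$ (up to the reflection through the affine hull of the $d+1$ centers, which is killed by having at least one further neighbor or by the genericity forcing the remaining neighbors off that hyperplane — this is exactly where degree $>d$, not $=d$, is used). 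Therefore $\bq\equiv\bp$, proving global rigidity.

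The main obstacle is the bookkeeping in the middle step: carefully arguing that every embedding of $G-v$ into an equivalent realization of $G$ presents $N_G(v)$ via an isometry drawn from a \emph{finite} list depending only on $\bp'$ (not on $\bp(v)$), so that the algebraic-independence choice of $\bp(v)$ is well-posed, and then extracting from the $\ge d+1$ neighbor-distance equations a single nonzero polynomial in $\bp(v)$ that certifies the isometry is trivial. Once that is set up, the sphere-intersection endgame is routine, with the degree-$>d$ hypothesis precisely ruling out the mirror-reflection ambiguity.
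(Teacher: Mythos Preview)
There is a genuine gap at the heart of your argument. You claim that an equivalent realization of $(G,\bp)$, restricted to $V\setminus\{v\}$, gives a realization $\bq'$ of $G-v$ such that $\bq'|_{N_G(v)}$ differs from $\bp'|_{N_G(v)}$ by an isometry $(A_{\bq'},t_{\bq'})$. But this restriction is only equivalent to $\bp'$ as a realization of $G-v$, \emph{not} of $G'=G-v+K(N_G(v))$; the edges of $K(N_G(v))$ are precisely what is missing. Hence global rigidity of $G'$ gives you nothing about $\bq'$, and there is no reason the neighbor set $N_G(v)$ should be presented isometrically in $\bq'$. Your body-bar analogy breaks down exactly here: in the body-bar proof the two bodies are complete subgraphs of the framework under study, so any equivalent realization \emph{does} present each body via an isometry; in the present lemma $N_G(v)$ is not complete in $G-v$, and its congruence type can change across the finitely many equivalent realizations of $(G-v,\bp')$. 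Consequently the polynomial you write down, with coefficients coming from a putative $(A_{\bq'},t_{\bq'})$, does not exist in the form you need.

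The paper's proof circumvents this by a different, more local device: it places $p_v$ within $\epsilon/4$ of one fixed neighbor $p_i$, so that for any equivalent $\bq$ the triangle inequality through $v$ forces $\bigl|\|p_i-p_j\|-\|q_i-q_j\|\bigr|\le\epsilon$ for every other neighbor $j$. Since $G-v$ is rigid, only finitely many values of $\|q_i-q_j\|$ can occur, so for small enough $\epsilon$ this inequality forces $\|q_i-q_j\|=\|p_i-p_j\|$, i.e.\ one may add the edge $ij$ for free. Iterating over pairs fills in $K(N_G(v))$, after which the hypothesis on $G'$ and the degree condition $d_G(v)>d$ finish the job. The key idea you are missing is this one-edge-at-a-time continuity/finiteness argument that manufactures the missing edges of $K(N_G(v))$; you cannot simply import them from the global rigidity of $G'$.
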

\begin{proof}
Let $(G-v,\bp')$ be a generic realization of $G-v$, and let us take any pair of distinct vertices $i, j$ in $N_G(v)$ such that $ij\notin E(G)$ .
Since $G-v$ is rigid,  we may assume that $c(G-v,\bp')$ is finite by Proposition~\ref{prop:key}.
Hence, the number of possible distances between $q_i'$ and $q_j'$ 
over all $\bq':V(G-v)\rightarrow \mathbb{R}^d$ with $(G-v,\bq')\simeq (G-v,\bp')$ is finite. 
This implies that there is $\epsilon>0$ such that, for any $\bq'$ with $(G-v,\bq')\simeq (G-v,\bp')$,
$|\|p_i'-p_j'\|-\|q_i'-q_j'\||\leq \epsilon$ implies $\|q_i'-q_j'\|=\|p_i'-q_j'\|$.

Let us extend $\bp'$ to $\bp:V(G)\rightarrow \mathbb{R}^d$ such that
$\bp$ is generic and $p_v\in B(p_i,\frac{\epsilon}{4})$.

Consider any $\bq:V(G)\rightarrow \mathbb{R}^d$ with $(G,\bq)\simeq (G,\bp)$.
By $p_v\in B(p_i,\frac{\epsilon}{4})$ and $i, j\in N_G(v)$, we have $|\|p_i-p_j\|-\|q_i-q_j\||\leq \epsilon$. Since $(G-v,\bp')$ is a subframework of $(G,\bp)$, it follows that $\|q_i-q_j\|=\|p_i-p_j\|$.
Therefore, $(G,\bp)$ is globally rigid if $(G+ij, \bp)$ is globally rigid.
In other words $G$ is globally rigid if (and only if) $G+ij$ is globally rigid. 

Applying the same argument to $G+ij$ and then repeatedly to the resulting larger graphs we conclude that $G$ is globally rigid if and only if  
$G+K(N_G(v))$ is globally rigid.
The global rigidity of $G+K(N_G(v))$ now follows from the global rigidity of $G-v+K(N_G(v))$ since $v$ has degree more than $d$.
\end{proof}


\noindent{\em Remark.} 
Lemma~\ref{lem:1extension-extension} is also implicit in \cite[Lemma 2.3.1]{szabadka} with a completely different argument. It should be noted that Szabadka's proof does not rely on Theorem~\ref{thm:generic}. See also \cite{JJS06, JO12}.

Our proof of Lemma~\ref{lem:1extension-extension} relies on Theorem~\ref{thm:generic}. It is possible to find  {\em a} generic globally rigid realization based on elementary geometric observations without using Theorem~\ref{thm:generic} in $d\leq 3$. Namely one can show the following only using Proposition~\ref{prop:key}.
\begin{theorem}
\label{thm:main3}
Let $G=(V,E)$ be a graph and $v$ be a vertex of degree more than three.
Suppose that 
\begin{itemize}
\item $G-v$ is rigid in $\mathbb{R}^3$, and 
\item there is a generic $\bp':V\setminus \{v\}\rightarrow \mathbb{R}^d$ such that 
$(G-v+K(N_G(v)),\bp')$ is globally rigid in $\mathbb{R}^3$.
\end{itemize}
Then there is a generic $\bp:V\rightarrow \mathbb{R}^d$ extending $\bp'$ such that $(G,\bp)$ is globally rigid in $\mathbb{R}^3$.
\end{theorem}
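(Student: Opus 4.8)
The plan is to imitate the proof of Lemma~\ref{lem:1extension-extension} but, since we now want to produce a single explicit generic globally rigid realization rather than invoke Theorem~\ref{thm:generic}, we must keep track of the realization geometrically. Start from the given generic $\bp'$ for which $(G-v+K(N_G(v)),\bp')$ is globally rigid. By Proposition~\ref{prop:key} applied to the rigid graph $G-v$ (which is a spanning subgraph of $G-v+K(N_G(v))$, hence rigid), the number $c(G-v,\bp')$ is finite; consequently, for every pair $i,j\in N_G(v)$ the set of realizable distances $\|q_i'-q_j'\|$ over $\bq'\simeq\bp'$ is finite, so there is an $\epsilon>0$ with the property that, for all such $\bq'$ and all $i,j\in N_G(v)$, $\bigl|\|p_i'-p_j'\|-\|q_i'-q_j'\|\bigr|\le\epsilon$ forces $\|q_i'-q_j'\|=\|p_i'-p_j'\|$. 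Now extend $\bp'$ to $\bp$ by placing $p_v$ inside $B(p_i,\tfrac{\epsilon}{4})$ for some fixed $i\in N_G(v)$ and generic over the field generated by $\mathbb{Q}$ and the coordinates of $\bp'$; then $\bp$ is generic.

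The core argument: take any $\bq:V\to\mathbb{R}^3$ with $(G,\bq)\simeq(G,\bp)$. For every $j\in N_G(v)$ we have $\|p_v-p_j\|<\tfrac{\epsilon}{4}+\|p_i-p_j\|$ (and similarly a lower bound), and the edges $vi,vj$ are present in $G$, so $\bigl|\|p_i-p_j\|-\|q_i-q_j\|\bigr|\le\|p_i-p_v\|+\|q_i-q_v\|+(\text{similar})\le\epsilon$ by the triangle inequality, using $\|q_i-q_v\|=\|p_i-p_v\|<\tfrac{\epsilon}{4}$ and $\|q_j-q_v\|=\|p_j-p_v\|$. Since $\bq$ restricted to $V\setminus\{v\}$ is equivalent to $\bp'=\bp|_{V\setminus\{v\}}$ on the edges of $G-v$, the choice of $\epsilon$ gives $\|q_i-q_j\|=\|p_i-p_j\|$ for every $i,j\in N_G(v)$. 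Therefore $(G,\bq)\simeq(G,\bp)$ already implies $(G+K(N_G(v)),\bq)\simeq(G+K(N_G(v)),\bp)$, so $(G,\bp)$ is globally rigid if and only if $(G+K(N_G(v)),\bp)$ is. Because $v$ has degree more than $3$ in $G+K(N_G(v))$ but all its neighbours now span a complete graph, $v$ is "glued" and $(G+K(N_G(v)),\bp)$ is globally rigid precisely when $(G-v+K(N_G(v)),\bp')$ is (a framework and a coning-type vertex of degree $>d$ over a globally rigid framework is globally rigid — this is the standard observation behind $0$-extensions/coning, and the extra edges among $N_G(v)$ make it immediate). The latter holds by hypothesis, so $(G,\bp)$ is globally rigid.

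The step that needs the most care — and the reason the statement is restricted to $d\le3$ — is the very last one: showing that adding the vertex $v$ of degree $>d$ to the globally rigid generic framework $(G-v+K(N_G(v)),\bp')$ keeps it globally rigid when we also have the complete graph on $N_G(v)$. In general dimension, a new vertex of degree $d+1$ attached to a globally rigid framework need not give a globally rigid one (this is exactly the content of the $1$-extension theorem, Theorem~\ref{thm:connelly}, and its proof genuinely uses stress matrices or Theorem~\ref{thm:generic}); what saves us here is that we have \emph{all} $\binom{|N_G(v)|}{2}$ edges among the neighbours, so $v$ together with $d+1$ of its neighbours forms a clique $K_{d+2}$ minus nothing, and in $d\le3$ one checks by an elementary argument (the two preimages $q_v$ of $p_v$ under the reflection across the affine hull of $d$ of the neighbours are distinguished by the $(d+1)$-st neighbour, generically) that $q_v$ is determined by the $q_i$, $i\in N_G(v)$, once those are pinned. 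I would isolate this as a short self-contained sublemma: if $(H,\br)$ is generic and globally rigid in $\mathbb{R}^d$ with $d\le 3$, $X\subseteq V(H)$ induces a complete subgraph with $|X|\ge d+1$, and $H^+$ is obtained by adding a vertex $v$ joined to a set $N\supseteq$ some $(d+1)$-subset of $X$ with $N\subseteq X$, then a generic extension of $\br$ to $v$ is globally rigid; this is the only place where $d\le 3$ enters, and it follows from the uniqueness of trilateration in low dimension together with a genericity/algebraic-independence choice of $p_v$ as above. The rest of the proof is the verbatim geometric book-keeping of Lemma~\ref{lem:1extension-extension}.
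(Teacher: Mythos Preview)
Your argument has a genuine gap in the ``core'' step. By placing $p_v$ in $B(p_{u_0},\epsilon/4)$ for one \emph{fixed} $u_0\in N_G(v)$, the triangle-inequality computation you wrote down shows only that $\bigl|\|p_{u_0}-p_j\|-\|q_{u_0}-q_j\|\bigr|\le\epsilon$ for every $j\in N_G(v)$; it does \emph{not} control $\bigl|\|p_j-p_k\|-\|q_j-q_k\|\bigr|$ for two neighbours $j,k$ both far from $u_0$. So from a single placement of $p_v$ you can only conclude that the star $\{u_0u_j:j\ge1\}$ may be added, not the whole $K(N_G(v))$. In the proof of Lemma~\ref{lem:1extension-extension} this is harmless because one iterates, passing to a \emph{different} generic realization at each step via Theorem~\ref{thm:generic}; here you are forbidden to use Theorem~\ref{thm:generic}, so you cannot move $p_v$ and the iteration breaks down.

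You have also misidentified where $d\le3$ is needed. Your ``sublemma'' (adding a new vertex $v$ joined to at least $d+1$ vertices of a complete subgraph of a globally rigid generic framework) is true in every dimension: once the base framework is pinned, $d+1$ spheres with affinely independent centres meet in at most one point, so $q_v=p_v$. The dimensional restriction is needed precisely for the step you skipped, namely upgrading the star $\{u_0u_j\}$ to the full clique $K(N_G(v))$ while keeping $\bp$ fixed. The paper's proof handles this by a genuinely three-dimensional geometric argument: $p_v$ is placed near a point $r^*$ on the line through $p_{u_0}$ and $p_{u_1}$, and one analyses intersections of circles in the plane through $p_{u_0},p_{u_1},p_{u_i}$ together with reflections across that line to force $\|q_{u_1}-q_{u_i}\|=\|p_{u_1}-p_{u_i}\|$ for each $i$; a further reflection argument across the plane spanned by $p_{u_0},p_{u_1},p_v$ then finishes the clique. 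This circle/reflection geometry is the real content of the $d=3$ hypothesis, and it is absent from your sketch.
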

The proof may be interesting in its own right and is given in the Appendix.

\subsection{Applications}
In this subsection we shall give two applications of Lemma~\ref{lem:1extension-extension}.
\begin{theorem}
\label{thm:vertex-redundant}
Let $G=(V,E)$ be a vertex-redundantly rigid graph in $\mathbb{R}^d$.
Then, $G$ is globally rigid in $\mathbb{R}^d$.
\end{theorem}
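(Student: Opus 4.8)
The plan is to reduce the statement to Lemma~\ref{lem:1extension-extension} by choosing an appropriate vertex $v$ and verifying the two hypotheses of that lemma, then to conclude by induction on $|V|$. First I would handle small or degenerate cases: if $G$ is a complete graph on at most $d+1$ vertices the claim is immediate, and if $|V|\le d+2$ one can check global rigidity directly (vertex-redundant rigidity of a graph on $d+2$ vertices forces $G=K_{d+2}$ or very close to it). So assume $|V|\ge d+3$ and $G$ is not complete.

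The key step is to pick $v$ to be a vertex of minimum degree. Since $G-v$ is rigid in $\mathbb{R}^d$ for every $v$ (this is exactly vertex-redundant rigidity), the first bullet of Lemma~\ref{lem:1extension-extension} holds automatically for any choice of $v$. It remains to check that $v$ has degree $>d$ and that $H:=G-v+K(N_G(v))$ is globally rigid in $\mathbb{R}^d$. For the degree bound: a rigid graph on at least $d+2$ vertices has minimum degree at least $d+1$ (standard fact from the rigidity matroid rank count $d|V|-\binom{d+1}{2}$, since a vertex of degree $\le d$ could be deleted without destroying the rank only if $|V|$ is small); applying this to $G$ itself, every vertex of $G$ has degree $\ge d+1>d$. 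For the global rigidity of $H$: I would argue that $H$ is again vertex-redundantly rigid, so that the inductive hypothesis applies (note $|V(H)|=|V(G)|-1$). Concretely, for any $w\in V(H)$ I need $H-w$ rigid. If $w\notin N_G(v)$, then $H-w \supseteq (G-v-w)+$ edges, and $G-v-w=(G-w)-v$ is rigid because $G-w$ is rigid and vertex-redundant rigidity is... — here care is needed, as rigidity of $G-w$ does not by itself give rigidity of $(G-w)-v$. Instead I would use that $G-w$ is rigid and contains $v$ with $d_{G-w}(v)\ge d$ after adding $K(N_G(v))$; more robustly, observe $H-w = (G-w) - v + K(N_{G}(v))$ and that contracting/deleting $v$ from a rigid graph where $v$'s neighborhood has been made complete preserves rigidity when $d_G(v)\ge d$ (a "$d$-reduction" style argument, or equivalently: $(G-w)-v+K(N_{G-w}(v))$ is rigid because $G-w$ is, and adding the remaining clique edges only helps). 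If $w\in N_G(v)$, a similar bookkeeping shows $H-w$ contains a rigid spanning subgraph.

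The main obstacle I anticipate is precisely this rigidity-preservation bookkeeping: showing that passing from $G$ to $G-v+K(N_G(v))$ preserves vertex-redundant rigidity, which requires the fact that if $G-v$ is rigid and $d_G(v)>d$ then $G-v+K(N_G(v))$ is rigid (true because $G$ itself is rigid — indeed $G-v$ rigid plus $v$ attached by $>d$ edges is rigid — and making $N_G(v)$ complete inside the already-rigid $G-v$ adds no obstruction), combined with the subtler claim about $(G-w)-v$. An alternative, cleaner route avoiding the induction on a modified graph: observe that $G-v+K(N_G(v))$ is globally rigid directly once we know $G-v$ is rigid and $|V|$ is large enough, because $G-v+K(N_G(v))$ is then $(d+1)$-connected and redundantly rigid and... — but this only settles $d=2$ via Theorem~\ref{thm:global-comb}. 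So the honest plan is the induction: verify the base case, pick $v$ of minimum degree (hence $d_G(v)\ge d+1$), check $G-v$ rigid (given) and $G-v+K(N_G(v))$ vertex-redundantly rigid (the bookkeeping step), apply induction to get $G-v+K(N_G(v))$ globally rigid, then invoke Lemma~\ref{lem:1extension-extension} to conclude $G$ globally rigid.
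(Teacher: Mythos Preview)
Your overall plan is the same as the paper's: induct on $|V|$, pick a vertex $v$, verify the hypotheses of Lemma~\ref{lem:1extension-extension}, and show that $G'=G-v+K(N_G(v))$ is again vertex-redundantly rigid so the induction closes. Two points deserve correction.

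First, your justification for $d_G(v)\ge d+1$ is wrong. It is \emph{not} true that a rigid graph on at least $d+2$ vertices has minimum degree $\ge d+1$: any $0$-extension produces a rigid graph with a vertex of degree exactly $d$. The correct argument (and the one the paper uses) goes through vertex-redundancy rather than rigidity of $G$ alone: if $d_G(v)\le d$, pick a neighbour $u$ of $v$; then $v$ has degree at most $d-1$ in $G-u$, and since $|V(G-u)|\ge d+1$ this forces $G-u$ to be flexible, contradicting vertex-redundant rigidity. Note this works for \emph{every} $v$, so there is no need to choose $v$ of minimum degree.

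Second, the ``bookkeeping'' you flag as the main obstacle is in fact a one-line argument, and your case split on $w\in N_G(v)$ versus $w\notin N_G(v)$ is unnecessary. For any $u\in V(G')$ we have
\[
G'-u \;=\; (G-u)-v \;+\; K\bigl(N_G(v)\setminus\{u\}\bigr),
\]
and $|N_G(v)\setminus\{u\}|\ge d$ since $d_G(v)\ge d+1$. Now $G-u$ is rigid by hypothesis, and removing a vertex $v$ from a rigid graph after making its neighbourhood (of size $\ge d$) complete preserves rigidity: any infinitesimal motion of $(G-u)-v+K(N_{G-u}(v))$ extends to one of $(G-u)+K(N_{G-u}(v))$ because the clique on $N_{G-u}(v)\cup\{v\}$ is itself rigid, hence the motion is trivial. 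This is exactly the ``$d$-reduction'' fact you gesture at, and it handles both cases at once. With this in hand the induction goes through immediately, matching the paper's proof.
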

\begin{proof}
The proof is done by induction on $|V|$.
When $|V|\leq d+2$, a vertex-redundantly rigid graph $G$ is isomorphic to $K_{|V|}$, 
which is globally rigid in $\mathbb{R}^d$ by definition.

Let us assume $|V|>d+2$. Take any vertex $v\in V$.
If $d_G(v)\leq d$, then the removal of any neighbor of $v$ results in a flexible framework.
Hence, $d_G(v)\geq d+1$ due to the vertex-redundancy of $G$.
Also $G-v$ is rigid.

Let $G'=G-v+K(N_G(v))$.
Suppose that $G'$ is not vertex-redundantly rigid.
Then there is a vertex $u$ such that $G'-u$ is not rigid.
Since $d_G(v)\geq d+1$, $N_G(v)\setminus \{u\}$ induces a complete subgraph of at least $d$ vertices 
in $G'-u$.
Therefore, $G-u$ cannot be rigid, which contradicts the vertex-redundancy of $G$.
Thus $G'$ is vertex-redundantly rigid,
and by induction $G'$ is globally rigid in $\mathbb{R}^d$.
By Lemma~\ref{lem:1extension-extension} $G$ is also globally rigid in $\mathbb{R}^d$. 
\end{proof}

The extremal properties of vertex-redundantly rigid graphs in $\mathbb{R}^d$ are 
investigated in \cite{servatius,KK}, where vertex-redundantly rigid graphs are referred to as bi-rigid graphs.

We also remark that the characterization for body-bar frameworks (Theorem~\ref{thm:body-bar}) easily follows from Theorem~\ref{thm:vertex-redundant}.

Next we consider the global rigidity of $k$-chains. Following Frank and Jiang~\cite{frank2011}, 
we define  a {\em $k$-chain} as a bipartite graph on disjoint $k$ sets $A_1, A_2, \dots, A_k$ such that $A_i\cup A_{i+1}$ induces the complete bipartite graph on $A_i$ and $A_{i+1}$ for all $1\leq i\leq k-1$ and there are no other edges.
Note that if $k=2$ or $k=3$ a $k$-chain is a complete bipartite graph.
Frank and Jiang~\cite{frank2011} gave a characterization of the global rigidity of $k$-chains with ${d+1\choose 2}$ vertices and conjectured the case of more than ${d+1\choose 2}$ vertices~\cite[Conjecture 29]{frank2011}. 
Here we give an affirmative answer to their conjecture.
\begin{theorem}
Any $(d+1)$-connected $k$-chain with more than ${d+1\choose 2}$ vertices is globally rigid 
in $\mathbb{R}^d$.
\end{theorem}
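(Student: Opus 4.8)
Throughout write $G$ for a $(d+1)$-connected $k$-chain on parts $A_1,\dots,A_k$ with $|V(G)|>{d+1\choose 2}$. When $k\le 3$, $G$ is complete bipartite, and I would dispose of this case by quoting the known description of globally rigid complete bipartite graphs; so assume $k\ge 4$. The basic structural remark I would use is that, since for $2\le i\le k-1$ the part $A_i$ separates $A_1\cup\dots\cup A_{i-1}$ from $A_{i+1}\cup\dots\cup A_k$, $(d+1)$-connectivity forces $|A_i|\ge d+1$ for every interior part; conversely, since a vertex set disconnects a $k$-chain only if it contains an entire interior part, a $k$-chain on $\ge d+2$ vertices whose interior parts all have size $\ge d+1$ is $(d+1)$-connected. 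This dictionary controls the connectivity bookkeeping below.

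The proof would go by induction on $|V(G)|$, using the Vertex Removal Lemma (Lemma~\ref{lem:1extension-extension}). Fix a vertex $v$ in an end part, say $v\in A_1$; then $N_G(v)=A_2$ and $|A_2|\ge d+1$, so $d_G(v)>d$. I claim that $G-v+K(N_G(v))=G-v+K(A_2)$ is globally rigid. In a generic realization the now-complete set $A_2$ is a globally rigid clique on $\ge d+1$ vertices, hence occupies a position in general position; every vertex of $(A_1\setminus v)\cup A_3$ is joined to all of $A_2$, so its position is uniquely pinned down (a point is determined by its distances to $d+1$ points in general position); then every vertex of $A_4$ is pinned by the now-fixed $A_3$, which again has $\ge d+1$ vertices in general position, and so on down the chain, using $|A_i|\ge d+1$ for every interior $i$. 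Thus $G-v+K(A_2)$ is globally rigid, and by Lemma~\ref{lem:1extension-extension} it suffices to show that $G-v$ is rigid in order to conclude that $G$ is globally rigid.

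For the rigidity of $G-v$ I would split into two cases. If one can choose the end vertex $v$ (or instead a vertex in an interior part of size $\ge d+2$) so that $G-v$ is still a $(d+1)$-connected $k'$-chain with $k'\in\{k-1,k\}$ and more than ${d+1\choose 2}$ vertices — which, using the dictionary above, is possible whenever $|V(G)|>{d+1\choose 2}+1$ together with a short check that some part can be shrunk without violating the connectivity condition — then $G-v$ is globally rigid by the induction hypothesis, hence rigid, and we are done. The remaining base configurations, with $|V(G)|$ at its minimum given the constraints, are few and completely explicit; for these the truncated chain $G-v$ has exactly ${d+1\choose 2}$ vertices, and I would invoke the analysis of $k$-chains on ${d+1\choose 2}$ vertices by Frank and Jiang~\cite{frank2011} to conclude that $G-v$ is rigid.

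The step I expect to be the real obstacle is exactly the rigidity of the truncated chain $G-v$ in the base cases — that is, recognizing which $k$-chains are generically rigid, a Bolker--Roth-type question about bipartite frameworks — together with pinning down the precise list of minimal configurations. Everything else (the pinning argument for global rigidity of $G-v+K(N_G(v))$, which only needs interior parts of size $\ge d+1$, and the preservation of $(d+1)$-connectivity under the deletion) is routine bookkeeping.
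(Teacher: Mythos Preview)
Your approach is essentially the paper's: both apply the Vertex Removal Lemma with $v$ taken from an end part, both verify that $G-v+K(A_2)$ is globally rigid by the same pinning/$0$-extension argument starting from the clique $K(A_2)$, and both ultimately reduce the rigidity of the truncated chain to the Frank--Jiang result on $(d+1)$-connected $k$-chains with $\binom{d+1}{2}$ vertices (together with Bolker--Roth for $k\le 3$). The only real difference is organizational. The paper first proves, once and for all, that every $(d+1)$-connected $k$-chain on at least $\binom{d+1}{2}$ vertices is rigid --- by locating inside it a $(d+1)$-connected subchain on exactly $\binom{d+1}{2}$ vertices (rigid by Frank--Jiang or Bolker--Roth) and then growing it one vertex at a time with at least $d$ new edges --- and then applies the lemma a single time. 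Your induction on $|V(G)|$ is logically equivalent but forces you to manage base cases and to contemplate deleting interior vertices; the latter is unnecessary, since deleting a vertex from an end part never touches the interior parts and hence always preserves the $(d+1)$-connectivity of the chain.

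One caution: your plan to dispose of $k\le 3$ by ``quoting the known description of globally rigid complete bipartite graphs'' is not safe as stated. There is no clean off-the-shelf characterization to quote --- complete bipartite graphs are precisely where the counterexamples to Hendrickson's conjecture live, so their global rigidity is delicate --- and in any case the paper's argument (and your own for $k\ge 4$) works uniformly for all $k$ once one has the rigidity input from Bolker--Roth. The case split should simply be dropped.
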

\begin{proof}
By Bolker and Roth~\cite{bolker}, 
it is known that any $(d+1)$-connected complete bipartite graph is rigid in $\mathbb{R}^d$ 
if the number of vertices is at least ${d+1\choose 2}$. 
Frank and Jiang showed that any $(d+1)$-connected $k$-chain is rigid if $k\geq 4$ and the number of vertices is ${d+1\choose 2}$~\cite[Lemma 17]{frank2011}.
We claim  that  any $(d+1)$-connected $k$-chain $G$ with at least ${d+1\choose 2}$ vertices is rigid.
This can be checked as follows.
If $k\leq 3$ the claim follows from the result of Bolker and Roth.
If $k>3$, then we can take $X_i\subseteq A_i$ with $|X_i|=d+1$ for $i=2, 3$.
By inductively adding vertices to the induced subgraph by $X_2\cup X_3$, one can find a $(d+1)$-connected $k'$-chain $G'$ in $G$ with ${d+1\choose 2}$ vertices for some $k'\leq k$. This subgraph $G'$ is rigid by the result of Frank and Jiang, and $G$ is rigid since $G$ can be obtained from $G'$ by inductively adding the vertices along with at least $d$ edges.

Now consider any $(d+1)$-connected $k$-chain $G$ and without loss of generality 
assume $|A_1|\leq |A_k|$.
Let us take a vertex $v$ from $A_1$.
Then $G-v$ is still a $(d+1)$-connected $k$-chain and hence is rigid.
Also $G-v+K(N_G(v))$ is globally rigid in $\mathbb{R}^d$, 
since $K(N_G(v))=K(A_2)$ is globally rigid by $|A_2|\geq d+1$ and $G-v+K(N_G(v))$ can be obtained from $K(A_2)$ by inductively adding vertices along with at least $d+1$ edges.

Therefore Lemma~\ref{lem:1extension-extension} implies that $G$ is globally rigid in $\mathbb{R}^d$.  
\end{proof}

\subsection{A Proof of Theorem~\ref{thm:global-comb}}
In this section we shall give a new proof of the sufficiency of Theorem~\ref{thm:global-comb}.

Since Lemma~\ref{lem:1extension-extension} is an extension of 1-extension theorem (Theorem~\ref{thm:connelly}), it might be possible to replace Theorem~\ref{thm:jj} with a shorter combinatorial argument for proving Theorem~\ref{thm:global-comb}.
Here we show that the following weaker version of Theorem~\ref{thm:jj} suffices for proving Theorem~\ref{thm:global-comb}.
\begin{lemma}
\label{lem:vertex3-edge}
Let $G$ be a 3-connected redundantly rigid graph in $\mathbb{R}^2$.
Then $G$ has a vertex of degree three or has an edge $e$ for which $G-e$ is 3-connected and 
redundantly rigid.
\end{lemma}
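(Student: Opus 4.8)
The plan is to use the standard inductive (constructive) strategy from the rigidity literature: analyze a minimally counterexample, or more precisely, argue about the structure of a $3$-connected redundantly rigid graph $G$ in $\mathbb{R}^2$ that has no vertex of degree three. First I would observe that by Laman-type counting, a redundantly rigid graph on $n\ge 4$ vertices has at least $2n-2$ edges (it contains a spanning rigidity circuit or a union thereof), so the average degree is at least $4-\tfrac{4}{n}$. Combined with the absence of degree-$3$ vertices, low-degree vertices are exactly those of degree $4$. The crux of the argument is then: pick a vertex $v$ of degree $4$ (which must exist unless every vertex has degree $\ge 5$, a case handled by a separate, easier sparsity count showing $|E|$ is large enough to always find a deletable edge keeping $3$-connectivity), and try to perform an \emph{inverse $1$-extension} at $v$ — i.e. delete $v$ and add one edge between two of its former neighbors to restore a rigidity circuit — but we are not allowed to use that here since $G$ need not be a circuit; instead I would directly hunt for an edge $e$ with $G-e$ still $3$-connected and redundantly rigid.

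The key steps, in order, would be: (1) Reduce to the case where $G$ has a vertex $v$ of degree exactly $4$, disposing of minimum degree $\ge 5$ by an edge-count argument (with $|E|\ge \tfrac{5n}{2}$ one has enough slack that deleting a generic non-bridge-of-$3$-connectivity edge keeps both $3$-connectivity, via a Mader/fan-type argument, and redundant rigidity, via the fact that $|E|$ far exceeds $2n-3$ on every relevant subgraph). (2) Given $v$ of degree $4$ with neighborhood $N(v)=\{a,b,c,d\}$, consider the "reduced" graphs $G_e = G - v + e$ for each of the (at most six) pairs $e$ from $N(v)$ that is a non-edge; by Henneberg theory on $\mathbb{R}^2$ one of these reductions preserves rigidity of the underlying structure, and I would show that for a suitable choice $G_e$ is redundantly rigid. (3) Analyze $3$-connectivity: if no $G_e$ is $3$-connected, then every reduction creates a $2$-separator, and I would extract from these separators a small vertex cut structure in $G$ near $v$; since $G$ is $3$-connected and $d_G(v)=4$, a short case check (essentially: the only way $G-v$ or $G-v+e$ fails $3$-connectivity is through a $2$-cut $\{x,y\}$ with $v$ "bridging" it, and then $v$ together with the $4$ neighbors forces structure that contradicts either redundant rigidity or the connectivity on the other side) pins down the exceptional configurations. (4) In those exceptional configurations, locate the desired edge $e$ directly: an edge incident to $v$, or an edge inside one "side" of the near-separator, whose removal manifestly keeps $3$-connectivity (because the separator persists with enough vertices on each side) and keeps redundant rigidity (because the side in question is over-braced).

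The main obstacle I expect is step (3): simultaneously controlling $3$-connectivity \emph{and} redundant rigidity under the reduction, because the $2$-separators produced by different reductions $G_e$ need not be consistent, and a $2$-separator of $G - v$ does not immediately give useful global structure — one has to use that $G$ itself is $3$-connected to "thread" $v$ through the separator and derive that one side is small (at most $d+1 = 3$ vertices besides the separator) or is so densely connected that an internal edge is redundant. A secondary technical point is verifying redundant rigidity of the reduced graph: one must check that $G_e - f$ is rigid for \emph{every} edge $f$, and the newly added edge $e$ is the delicate case — here I would use that $G - v$ already has many spanning rigidity-matroid-spanning subgraphs (being a vertex-deletion of a redundantly rigid, hence $2$-rigid, graph minus a degree-$4$ vertex, which loses only $2\cdot 2 = 4$ units of rank, compensated by adding $e$ plus the redundancy slack). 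I would therefore organize the proof so that the redundant-rigidity bookkeeping is reduced, via Laman's theorem and matroid-union, to counting, leaving only the connectivity case analysis as genuinely combinatorial work.
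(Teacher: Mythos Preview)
Your plan has a structural gap: steps (2)--(4) analyze Henneberg reductions $G_e = G - v + e$ at a degree-$4$ vertex, but establishing that some $G_e$ is $3$-connected and redundantly rigid does not give the lemma's conclusion. You need an edge $f \in E(G)$ with $G - f$ itself $3$-connected and redundantly rigid, and a successful reduction $G_e$ is a different graph on fewer vertices. Your step (4) only treats the case where every reduction fails to be $3$-connected; in the complementary case you have essentially proved an instance of Theorem~\ref{thm:jj} (the inverse $1$-extension step) rather than the present lemma, and even in a minimal-counterexample framework the lemma applied to the smaller $G_e$ does not transfer back to give a removable edge of $G$. Separately, your treatment of the min-degree $\ge 5$ case via a ``sparsity count'' does not work as stated: redundant rigidity means every edge lies in a rigidity circuit, a matroid-structural property, and $|E| \ge \tfrac{5n}{2}$ alone does not guarantee an edge whose deletion preserves it (a graph can have many edges yet have an edge meeting every circuit through some other edge).

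The paper's proof bypasses both issues by passing to $M$-connectivity of ${\cal R}_2(G)$ (equivalent, for $3$-connected graphs, to redundant rigidity by Theorem~\ref{thm:3-connected_M_connected}) and using ear decompositions. If $G$ has no degree-$3$ vertex, a lemma on ear decompositions (Lemma~\ref{lem:vertex3}) forces the last ear to be a single edge $e$, so $G - e$ is automatically $M$-connected; this is what replaces your sparsity argument. If no such $e$ leaves $G - e$ $3$-connected, one picks $e$ so that the smallest fragment of $G - e$ is minimized, cleaves along the resulting $2$-separator, and runs an ear decomposition on the small side starting from a circuit through the cleaving edge and the endpoint of $e$. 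This produces either a degree-$3$ vertex of $G$ (contradiction) or a new edge $f$ with $G - f$ still $M$-connected but with strictly smaller minimal fragment (using that $2$-separators of a rigid graph do not cross, Lemma~\ref{lem:sep-cross}), again a contradiction. None of this machinery---$M$-connectivity, ear decompositions, cleaving/$2$-sums, non-crossing separators---appears in your plan, and without it the argument does not close.
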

Before showing Lemma~\ref{lem:vertex3-edge} 
let us give a proof of the sufficiency of Theorem~\ref{thm:global-comb}.

\begin{proof}[Proof of the sufficiency of Theorem~\ref{thm:global-comb}]
The proof is done by  induction on the lexicographical order of $(|V|,|E|)$.
The base case is when $G$ is isomorphic to $K_4$, which is globally rigid.

Let us consider the case when  $|V|>4$.
By Lemma~\ref{lem:vertex3-edge}, $G$ has a vertex $v$ of degree three or has an edge $e$ for which 
$G-e$ is 3-connected and redundantly rigid in $\mathbb{R}^2$.
If $G$ has such an edge $e$, then 
by induction there is a globally rigid generic realization $(G-e,\bp)$,
and $(G,\bp)$ is globally rigid.
If $G$ has a vertex of degree three, then $G-v$ is rigid  and 
$G-v+K(N_G(v))$ is 3-connected and redundantly rigid since 
$G$ is 3-connected and redundantly rigid.
Therefore by Lemma~\ref{lem:1extension-extension} $G$ is globally rigid in $\mathbb{R}^2$.
\end{proof}

The remaining of this subsection is devoted to the proof of Lemma~\ref{lem:vertex3-edge}.
For proving it (without using Theorem~\ref{thm:jj}), 
we still need several observations given  by Berg and Jord{\'a}n~\cite{BJ03} 
and Jackson and Jord{\'a}n~\cite{JJ05} at an early stage of the proof of Theorem~\ref{thm:jj}.
Some of these observations are already nontrivial, 
but reasonably short proofs were given there.

To explain these ingredients we need some terminology. 
Let ${\cal M}$ be a matroid on a finite set $E$.
We define a relation $\sim$ on $E$ such that $e\sim e'$ if there is a circuit of ${\cal M}$ 
that contains $e$ and $e'$.
The relation $\sim$ is known to be an equivalence relation, and 
 an equivalence class with respect to $\sim$ is called an 
{\em connected component} of ${\cal M}$.
A subset $F\subseteq E$ is called {\em connected} if $e\sim e'$ for any $e,e'\in F$, 
and ${\cal M}$ is called {\em connected} if $E$ is connected.

%

A sequence  $C_1, C_2,\dots, C_t$ of circuits in ${\cal M}$ 
is called a {\em partial ear decomposition} of ${\cal M}$ 
if for any $2\leq i\leq t$
\begin{itemize}
\item $C_i\cap D_{i-1}\neq \emptyset$, where $D_i=C_1\cup C_2\cup \dots C_i$;
\item $C_i\setminus D_{i-1}\neq \emptyset$;
\item for any circuit $C$ with $C\cap D_{i-1}\neq \emptyset$ and $C\setminus D_{i-1}\neq \emptyset$,
$C\setminus D_{i-1}\subset C_i\setminus D_{i-1}$ does not hold.
\end{itemize}
A partial ear-decomposition is called an {\em ear decomposition} if $\bigcup_{i=1}^t C_i=E$.

\begin{theorem}[Coullard and Hellerstein~\cite{CH96}] 
\label{thm:ear}
A matroid ${\cal M}$ is connected if and only if ${\cal M}$ has an ear decomposition.
If ${\cal M}$ is connected, then any partial ear decomposition can be extended to an ear decomposition. 
\end{theorem}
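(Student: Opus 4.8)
The plan is to prove the two implications of the equivalence separately, obtaining the ``extension'' assertion (which also supplies one of the implications) as the engine of the harder direction. For the easy direction --- an ear decomposition forces connectedness --- I would show by induction on $i$ that each $D_i=C_1\cup\dots\cup C_i$ is a connected subset of $E$, i.e.\ $e\sim e'$ for all $e,e'\in D_i$, with $\sim$ the equivalence relation introduced above. The base case $D_1=C_1$ is clear, since any two elements of a circuit lie in a common circuit, namely that circuit itself. For the inductive step, the first defining property of a partial ear decomposition gives some $e\in C_i\cap D_{i-1}$; by the inductive hypothesis every element of $D_{i-1}$ is $\sim$-equivalent to $e$, and every element of $C_i$ is $\sim$-equivalent to $e$ because $C_i$ is a circuit through both; transitivity of $\sim$ then puts all of $D_i$ in one class. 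Taking $i=t$ and using $\bigcup_{i=1}^t C_i=E$ shows ${\cal M}$ is connected.

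For the other direction I would prove the extension statement directly, which also yields ``connected $\Rightarrow$ has an ear decomposition.'' The core step is: for $i\ge 2$, if $C_1,\dots,C_{i-1}$ is a partial ear decomposition with $D_{i-1}\neq E$, then it extends to a partial ear decomposition $C_1,\dots,C_i$. To construct $C_i$, fix $e\in D_{i-1}$ and $f\in E\setminus D_{i-1}$; connectedness of ${\cal M}$ gives a circuit containing both $e$ and $f$, and any such circuit meets both $D_{i-1}$ and $E\setminus D_{i-1}$. So the family of circuits $C$ with $C\cap D_{i-1}\neq\emptyset$ and $C\setminus D_{i-1}\neq\emptyset$ is nonempty, and since $E$ is finite we may choose from it a circuit $C_i$ for which $C_i\setminus D_{i-1}$ is inclusion-minimal; the first two conditions then hold by construction, and the minimality is exactly the third condition. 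To start when no ear is prescribed, a connected matroid on at least two elements contains a circuit (two distinct elements lie in a common one), which we take as $C_1$; the degenerate cases $|E|\le 1$ are settled by convention. Iterating the core step from the given partial decomposition (or from $C_1$) adjoins at least one new element of $E$ each time, so by finiteness of $E$ it terminates, necessarily at a stage with $D_i=E$ --- an ear decomposition extending the original. In particular a connected matroid has an ear decomposition, which completes the equivalence.

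I do not expect a serious obstacle here: the one genuinely non-trivial ingredient is that $\sim$ is an equivalence relation --- in particular its transitivity --- and this is already quoted as known in the excerpt. The single point demanding care is the precise reading of the third defining condition of a partial ear decomposition: $C_i\setminus D_{i-1}$ must be inclusion-minimal among the ``new parts'' of circuits bridging $D_{i-1}$ and $E\setminus D_{i-1}$ --- not of least cardinality, and not minimal over all circuits --- and one should verify that the minimal choice made above satisfies this condition verbatim, i.e.\ that no bridging circuit $C$ has $C\setminus D_{i-1}$ a proper subset of $C_i\setminus D_{i-1}$.
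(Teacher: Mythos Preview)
The paper does not prove this theorem; it is quoted from Coullard and Hellerstein~\cite{CH96} and used as a black box in the proof of Lemma~\ref{lem:vertex3-edge}, so there is no proof in the paper to compare against.

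Your argument is correct and is essentially the standard one. The forward direction (ear decomposition implies connected) is an easy induction exactly as you describe, resting only on transitivity of $\sim$. For the extension step, choosing among the bridging circuits one whose ``new part'' $C\setminus D_{i-1}$ is inclusion-minimal is precisely what is needed to satisfy the third condition verbatim, and the existence of at least one bridging circuit follows immediately from connectedness. Since each step enlarges $D_{i-1}$ strictly, termination is clear. The only residual wrinkle is the degenerate case of a connected matroid whose ground set is a single coloop: under the paper's definition of connectedness such a matroid is connected, yet it has no circuit and hence no ear decomposition. This is a well-known boundary case, usually handled (as you indicate) by convention or by tacitly assuming the matroid is loopless/coloopless or has $|E|\ge 2$; it does not affect the application in the paper, where the matroids in question always contain circuits.
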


Now let us come back to rigidity.
Following \cite{JJ05}, 
we say that a graph $G$ is  {\em $M$-connected} in $\mathbb{R}^d$ if ${\cal R}_d(G)$ is connected.
\begin{theorem}[\cite{JJ05}, Theorem 3.2]
\label{thm:3-connected_M_connected}
Suppose that a graph $G$ is 3-connected.
Then $G$ is redundantly rigid in $\mathbb{R}^2$ if and only if $G$ is $M$-connected 
in $\mathbb{R}^2$.
\end{theorem}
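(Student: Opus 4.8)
The plan is to prove the two implications separately; only ``redundantly rigid $\Rightarrow$ $M$-connected'' will use the $3$-connectivity hypothesis. The key auxiliary fact I would establish first is that every $M$-connected graph without isolated vertices is rigid in $\mathbb{R}^2$. For this, observe that a rigidity circuit $C$ in $\mathbb{R}^2$ is itself rigid: by the characterization recalled above $|E(C)|=2|V(C)|-2$, so $r_{{\cal R}_2}(C)=|E(C)|-1=2|V(C)|-3$, the maximum rank on $|V(C)|$ vertices, and $|V(C)|\ge 4$. Now suppose $G$ is $M$-connected but not rigid. Its maximal rigid subgraphs are pairwise edge-disjoint (two sharing an edge would share two vertices, so their union would be rigid, contradicting maximality) and cover $E(G)$ (a single edge is already rigid), and since $G$ is not rigid there are at least two of them; pick edges $e,f$ lying in distinct maximal rigid subgraphs $H,H'$. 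A circuit $C$ of ${\cal R}_2(G)$ containing $e$ and $f$ is rigid and shares the edge $e$ --- hence two vertices --- with $H$, so $C\cup H$ is rigid and strictly larger than $H$ (it contains $f\notin E(H)$), a contradiction. Hence $G$ is rigid.

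Granting this, ``$M$-connected $\Rightarrow$ redundantly rigid'' is immediate: $G$ is rigid, and for each $e\in E(G)$ the element $e$ lies in some circuit, hence is not a coloop, so $r_{{\cal R}_2}(G-e)=r_{{\cal R}_2}(G)=2|V|-3$ and $G-e$ (still on vertex set $V$) is rigid. For the converse I would argue by contradiction: assume $G$ is $3$-connected and redundantly rigid but ${\cal R}_2(G)$ has connected components $E_1,\dots,E_t$ with $t\ge 2$; write $G_i=G[E_i]$ and $V_i=V(G_i)$. Redundant rigidity makes $G$ rigid and forbids coloops, so $|E_i|\ge 2$ for all $i$; hence each ${\cal R}_2(G_i)={\cal R}_2(G)|_{E_i}$ is connected, so by the auxiliary fact each $G_i$ is rigid, and being $M$-connected it contains a rigidity circuit, so $|V_i|\ge 4$.

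Two structural facts drive the rest. First, since rank is additive over the connected components of a matroid, $2|V|-3=r_{{\cal R}_2}(G)=\sum_i r_{{\cal R}_2(G_i)}(E_i)=\sum_i(2|V_i|-3)$, i.e.\ $\sum_i|V_i|-|V|=\tfrac{3(t-1)}{2}$. Second, distinct $M$-components meet in at most one vertex: for $i\ne j$, since $E_i$ and $E_j$ lie in different components $r(E_i\cup E_j)=r(E_i)+r(E_j)$, whereas if $|V_i\cap V_j|\ge 2$ then, working in ${\cal R}_2$ on the complete graph, the closures of $E_i$ and $E_j$ are $K(V_i)$ and $K(V_j)$, and $K(V_i)\cup K(V_j)$ is rigid on $V_i\cup V_j$, so $r(E_i\cup E_j)=2|V_i\cup V_j|-3=r(E_i)+r(E_j)-(2|V_i\cap V_j|-3)<r(E_i)+r(E_j)$, a contradiction; hence $|V_i\cap V_j|\le 1$.

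The contradiction now comes from a double count. Let $S=\{v\in V: v\in V_i\cap V_j\text{ for some }i\ne j\}$ and $c(v)=|\{i: v\in V_i\}|$; since $c(v)\ge 2$ precisely on $S$, the first fact gives $\sum_{v\in S}(c(v)-1)=\tfrac{3(t-1)}{2}$, so $|S|\le\tfrac{3(t-1)}{2}$. For each $i$ put $W_i=V_i\cap S$; every edge of $G$ with one endpoint in $V_i$ and one outside lies in some $G_j$ ($j\ne i$), so meets $V_i$ only in $W_i$. If $|W_i|\le 2$, then $V_i\setminus W_i\ne\emptyset$ (as $|V_i|\ge 4$) and $V\setminus V_i\ne\emptyset$ (since $|V_i\cap V_j|\le 1<|V_j|$ means no $V_j$ lies inside $V_i$), so $W_i$ is a vertex cut of $G$ of size at most two --- impossible as $G$ is $3$-connected. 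Hence $|W_i|\ge 3$ for all $i$, and counting incidences between $\{G_1,\dots,G_t\}$ and $S$ gives $3t\le\sum_i|W_i|=\sum_{v\in S}c(v)=|S|+\tfrac{3(t-1)}{2}\le 3(t-1)$, which is absurd. The step I expect to be the main obstacle is the second structural fact: carefully justifying that a rigid edge set $E_i$ has rigidity-matroid closure exactly $K(V_i)$ in the complete graph and combining this with additivity of rank over matroid components to force distinct $M$-components to share at most one vertex; once that is in place the remaining arithmetic is routine.
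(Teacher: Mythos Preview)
The paper does not prove this theorem; it is quoted from Jackson and Jord\'an~\cite{JJ05} without proof and used as an ingredient in the proof of Lemma~\ref{lem:vertex3-edge}. So there is no ``paper's own proof'' to compare against here.

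That said, your argument is correct and is essentially the standard one. The forward direction (M-connected $\Rightarrow$ redundantly rigid) is clean. For the reverse direction, your two structural facts are exactly the right levers: rank additivity over $M$-components gives $\sum_i|V_i|=|V|+\tfrac{3(t-1)}{2}$, and the ``at most one shared vertex'' bound follows from the submodular/rank inequality you wrote (you only need $r(E_i\cup E_j)\le 2|V_i\cup V_j|-3$, which is automatic; the gluing statement is not actually needed, though it does no harm). The double count $3t\le\sum_i|W_i|=\sum_{v\in S}c(v)=|S|+\tfrac{3(t-1)}{2}\le 3(t-1)$ is tight and correct. The point you flag as the main obstacle---that a rigid edge set $E_i$ has closure $K(V_i)$ in ${\cal R}_2(K_V)$---is a standard fact: any edge inside $V_i$ cannot raise the rank above $2|V_i|-3$, while any edge to a new vertex is independent of $E_i$. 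One small remark: when you argue that each $G_i$ is rigid via the auxiliary fact, note that $G_i=G[E_i]$ has no isolated vertices by construction, and $|E_i|\ge 2$ since redundant rigidity forbids coloops, so the hypothesis of your auxiliary fact is indeed met.
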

\label{thm:redundant-connected}
The following lemma is implicit in \cite[Lemma 5.2]{JJ05} and explicit in \cite{J}.
\begin{lemma}
\label{lem:vertex3}
Let $G$ be an $M$-connected graph in $\mathbb{R}^2$, $C_1, C_2,\dots, C_t$ be an ear decomposition of  
${\cal R}_2(G)$,
and $D_{i}=\bigcup_{j=1}^i C_j$ where $D_0=\emptyset$.
If $V(C_t)\subseteq V(D_{t-1})$, then $|C_t|=1$;
otherwise $G$ has a vertex of degree three that is contained in $V(C_t)\setminus V(D_{t-1})$.
\end{lemma}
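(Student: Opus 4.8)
\emph{Proof strategy.} The plan is to combine three ingredients: that every $D_i$ is rigid, the rank identity governing ear decompositions, and an elementary degree count inside the rigidity circuit $C_t$.

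First I would record that each $D_i$ ($1\le i\le t$) is rigid in $\mathbb{R}^2$ on $V(D_i)$, so that the rank of $D_i$ in ${\cal R}_2(G)$ equals $2|V(D_i)|-3$. Indeed $D_1=C_1$ is a rigidity circuit, and deleting one edge leaves $2|V(C_1)|-3$ independent edges on $|V(C_1)|$ vertices, i.e.\ a spanning rigid subgraph; hence $D_1$ is rigid. For $i\ge 2$, $D_i=D_{i-1}\cup C_i$, where $D_{i-1}$ is rigid by induction, $C_i$ is rigid, and $|V(D_{i-1})\cap V(C_i)|\ge 2$ because $C_i\cap D_{i-1}$ is a nonempty set of edges; the gluing lemma (rigid graphs sharing at least two vertices have rigid union, in $\mathbb{R}^2$) then shows $D_i$ is rigid.

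Next I would establish the identity $\rank(D_i)=\rank(D_{i-1})+|C_i\setminus D_{i-1}|-1$ for $i\ge 2$ (ranks in ${\cal R}_2(G)$). The inequality ``$\le$'' is the standard matroid fact that appending a circuit raises the rank by at most one less than the number of new elements. For ``$\ge$'': were it strict, $C_i\setminus D_{i-1}$ would have corank at least $2$ in the contraction ${\cal R}_2(G)/D_{i-1}$; deleting from $C_i\setminus D_{i-1}$ an element lying outside some circuit of the contraction contained in it still leaves a dependent set, which produces a circuit $C$ of ${\cal R}_2(G)$ with $\emptyset\ne C\setminus D_{i-1}\subsetneq C_i\setminus D_{i-1}$ and $C\cap D_{i-1}\ne\emptyset$ --- contradicting the minimality clause in the definition of an ear decomposition. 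Combining this with the previous paragraph and writing $W_i:=V(C_i)\setminus V(D_{i-1})$, I obtain $|C_i\setminus D_{i-1}|=2|W_i|+1$ for every $i\ge 2$.

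Now the dichotomy. If $V(C_t)\subseteq V(D_{t-1})$ then $t\ge 2$ (since $V(D_0)=\emptyset$) and $W_t=\emptyset$, so the identity gives $|C_t\setminus D_{t-1}|=1$, i.e.\ the last ear is a single edge. Otherwise put $W:=W_t\ne\emptyset$. Each $v\in W$ has all of its $G$-edges inside $C_t$ --- in fact inside $F_t:=C_t\setminus D_{t-1}$, since no edge at $v$ can lie in $D_{t-1}$ --- and $d_G(v)=d_{C_t}(v)\ge 3$, as $C_t$ is a rigidity circuit. Assume for contradiction that $d_G(v)\ge 4$ for all $v\in W$. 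Let $e_W$ be the number of edges of $C_t$ with both ends in $W$ and $e'$ the number with exactly one end in $W$; then $4|W|\le\sum_{v\in W}d_{C_t}(v)=2e_W+e'$. If $t=1$ this already contradicts $2e_W+e'=2|E(C_1)|=4|W|-4$. If $t\ge 2$, then $e_W+e'\le|F_t|=2|W|+1$ (these $e_W+e'$ edges are distinct members of $F_t$), hence $e_W\ge 2|W|-1$; but $C_t[W]$ is a \emph{proper} subgraph of the circuit $C_t$ (it omits the nonempty set $C_t\cap D_{t-1}$, whose edges lie inside $V(C_t)\cap V(D_{t-1})$), so $e_W\le 2|W|-3$ when $|W|\ge 2$ and $e_W=0$ when $|W|=1$ --- a contradiction in every case. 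Therefore some $v\in W$ has $d_G(v)=3$, and it lies in $V(C_t)\setminus V(D_{t-1})$, as required.

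The step I expect to be the main obstacle is the rank identity: proving it holds with equality by extracting the forbidden ``smaller ear'' $C$ from a corank-$2$ contraction in order to invoke the minimality clause of the ear decomposition. The rigidity of the $D_i$ is the usual gluing lemma and the concluding degree count is routine; the whole argument rests on playing the ear-decomposition axioms against the $(2|V|-2)$ / $(2|V|-3)$ counts for rigidity circuits in $\mathbb{R}^2$.
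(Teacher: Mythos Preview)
The paper does not supply its own proof of this lemma; it is quoted from the literature (implicit in \cite[Lemma~5.2]{JJ05}, explicit in \cite{J}). Your argument is correct and follows the standard route used there: show each $D_i$ is rigid on its vertex set (by the $2$-vertex gluing lemma), prove the ear-decomposition rank identity $\rank(D_i)=\rank(D_{i-1})+|C_i\setminus D_{i-1}|-1$, combine the two to get $|C_i\setminus D_{i-1}|=2|V(C_i)\setminus V(D_{i-1})|+1$, and then finish with a degree count inside the rigidity circuit $C_t$. The step you flagged as the crux --- extracting a smaller ear from a corank-$2$ contraction --- is exactly the content of the minimality clause, and your handling of it is right; note in passing that the circuit $C$ of ${\cal R}_2(G)$ you produce must meet $D_{i-1}$, since otherwise $C\subsetneq C_i$ would contradict $C_i$ being a circuit.

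One remark on the statement itself: your proof yields $|C_t\setminus D_{t-1}|=1$ in the first case, not $|C_t|=1$. Since $C_t$ is a rigidity circuit in $\mathbb{R}^2$ it has at least six edges, so ``$|C_t|=1$'' as printed is evidently a typo for ``$|C_t\setminus D_{t-1}|=1$''. That is also what the paper actually uses in the proof of Lemma~\ref{lem:vertex3-edge}: removing the unique $e\in C_t\setminus D_{t-1}$ leaves $D_{t-1}$, which is $M$-connected. Your reading is the correct one.
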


We also need certain properties of rigid graphs related to 2-separators.
We say that two 2-separators $\{u,v\}$ and $\{s,t\}$ in $G$ {\em cross}
if $s$ and $t$ are in distinct connected components of $G-u-v$.
\begin{lemma}[\cite{JJ05}, Lemma 3.6]
\label{lem:sep-cross}
If $G$ is rigid in $\mathbb{R}^2$, then any two 2-separators do not cross.
\end{lemma}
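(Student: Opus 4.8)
The plan is to argue by contradiction: from a pair of crossing $2$-separators I will extract a ``four-bar-linkage'' decomposition of $G$ and then contradict rigidity by a rank count in ${\cal R}_2(G)$. Throughout I use two standard facts, submodularity of the rank function of a matroid and the inequality $\rank({\cal R}_2(H))\le 2|V(H)|-3$ valid for every graph $H$ with $|V(H)|\ge 2$ (the rigidity matrix always contains the three trivial infinitesimal motions in its kernel). First note that $G$ is $2$-connected: it has at least four vertices since it has a $2$-separator, and a cut vertex $w$ with $G-w$ splitting into vertex sets $A$ and $B$ would give $G=G[A\cup\{w\}]\cup G[B\cup\{w\}]$ as a union of edge-disjoint subgraphs, so submodularity would force $\rank({\cal R}_2(G))\le (2|A\cup\{w\}|-3)+(2|B\cup\{w\}|-3)=2|V|-4<2|V|-3$, contradicting rigidity.

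Now suppose $\{u,v\}$ and $\{s,t\}$ are $2$-separators with $s,t$ in distinct components of $G-u-v$. The first real step is to symmetrize the hypothesis: I claim $u$ and $v$ lie in distinct components of $G-s-t$ as well. If not, take a component $C$ of $G-s-t$ disjoint from $\{u,v\}$; by $2$-connectivity $C$ has at least two neighbours, which can only be $s$ and $t$, and since $C$ avoids $\{u,v\}$ it lies inside a single component of $G-u-v$, so the edges from $C$ to $s$ and to $t$ place both $s$ and $t$ in that component — contradicting that $\{u,v\}$ separates them.

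From here the structure falls out. The same idea shows $G-u-v$ has exactly two components $R_1\ni s$ and $R_2\ni t$: a third component would be adjacent to both $u$ and $v$ (by $2$-connectivity) while being disjoint from $\{s,t\}$, hence would connect $u$ and $v$ in $G-s-t$. Next, each component of $G[R_1]-s$ is adjacent to exactly one of $u,v$: it must meet $\{u,v\}$ (else a single vertex separates it from the rest of $G$), and meeting both would again connect $u$ and $v$ in $G-s-t$. Splitting $R_1$ and $R_2$ along this dichotomy yields a partition $V=\{u,v,s,t\}\cup P\cup Q\cup P'\cup Q'$, with $P,P'$ the ``$u$-sides'' of $R_1,R_2$ and $Q,Q'$ the ``$v$-sides''. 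Then every vertex of $P$ has all its neighbours in $P\cup\{u,s\}$, and likewise for $Q,P',Q'$; moreover $uv\notin E(G)$ (such an edge would survive in $G-s-t$) and $st\notin E(G)$ (it would survive in $G-u-v$). Hence $E(G)$ is covered by the four induced subgraphs $G_P=G[P\cup\{u,s\}]$, $G_Q=G[Q\cup\{v,s\}]$, $G_{Q'}=G[Q'\cup\{v,t\}]$, $G_{P'}=G[P'\cup\{u,t\}]$, arranged cyclically with single-vertex overlaps: $V(G_P)\cap V(G_Q)=\{s\}$, $V(G_Q)\cap V(G_{Q'})=\{v\}$, $V(G_{Q'})\cap V(G_{P'})=\{t\}$, $V(G_{P'})\cap V(G_P)=\{u\}$, and the two remaining pairs meet in no vertex.

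It remains to count. Applying submodularity of $\rank({\cal R}_2(G))$ repeatedly along this cyclic union — each overlap being a single vertex, hence edge-free — gives $\rank({\cal R}_2(G))\le \rank({\cal R}_2(G_P))+\rank({\cal R}_2(G_Q))+\rank({\cal R}_2(G_{Q'}))+\rank({\cal R}_2(G_{P'}))$. Each summand is at most $2|V(\cdot)|-3$, and $|V(G_P)|+|V(G_Q)|+|V(G_{Q'})|+|V(G_{P'})|=|V|+4$, so $\rank({\cal R}_2(G))\le 2(|V|+4)-12=2|V|-4$. But $G$ is rigid and not complete (since $uv\notin E(G)$), so $\rank({\cal R}_2(G))=2|V|-3$, a contradiction. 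I expect the main obstacle to be the structural part — verifying that crossing $2$-separators really force this clean four-block shape, namely that there is no third component of $G-u-v$ and that each block is confined to itself together with its two attachment vertices; the guiding intuition is that a planar four-bar linkage has a degree of freedom, after which the rank count is automatic.
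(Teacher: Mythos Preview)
The paper does not prove Lemma~\ref{lem:sep-cross}; it is quoted from \cite{JJ05} without argument, so there is no in-paper proof to compare against. Your proof is correct and follows the standard route: from a crossing pair of $2$-separators you extract a four-block decomposition of $G$ whose blocks overlap pairwise in at most one vertex, and then use subadditivity of the rank of ${\cal R}_2$ together with $\rank({\cal R}_2(H))\le 2|V(H)|-3$ to force $\rank({\cal R}_2(G))\le 2|V|-4$, contradicting rigidity. The structural steps (symmetry of crossing, exactly two components on each side, each side-block attaching to only one of $u,v$, and $uv,st\notin E(G)$) are all verified carefully and are correct. This is essentially how the lemma is proved in \cite{JJ05} as well.

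One small imprecision that does not affect correctness: in your final submodularity step, when you adjoin $G_{P'}$ to $G_P\cup G_Q\cup G_{Q'}$, the \emph{vertex} overlap is $\{u,t\}$, not a single vertex as your phrasing suggests. What you actually need is that the \emph{edge} intersection is empty, and this holds because the only candidate edge $ut$ lies in none of $E(G_P),E(G_Q),E(G_{Q'})$. Equivalently, since rank is subadditive for any matroid, the inequality $\rank({\cal R}_2(G))\le\sum_X\rank({\cal R}_2(G_X))$ needs no hypothesis on the overlaps at all; the overlap analysis is only used to compute $\sum_X|V(G_X)|=|V|+4$.
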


Let $G_1$ and $G_2$ be graphs with $V(G_1)\cap V(G_2)=\emptyset$. 
For edge $a_1b_1\in E(G_1)$ and $a_2b_2\in E(G_2)$, the {\em 2-sum $G_1\oplus_2 G_2$ of $G_1$ and $G_2$} (along $a_1b_1$ and $a_2b_2$)
is the graph obtained from $G_1-a_1b_1$ and $G_2-a_2b_2$ by identifying $a_1$ with $a_2$ and $b_1$ with $b_2$.

Conversely, suppose that $(G_1,G_2)$ is a 2-separation of a graph $G$ 
with $V(G_1)\cap V(G_2)=\{a,b\}$.
For $i=1,2$, let $G_i'=G_i+ab$ if $ab\notin E(G_i)$ and otherwise $G_i'=G_i$.
We say that $G_1'$ and $G_2'$ are the {\em cleavage graphs} obtained by {\em cleaving $G$ along $\{a,b\}$}.

\begin{lemma}[\cite{JJ05}, Lemma 3.3]
\label{lem:2sum}
Suppose $G_1$ and $G_2$ are $M$-connected in $\mathbb{R}^2$.
Then the 2-sum $G_1\oplus_2 G_2$ is $M$-connected in $\mathbb{R}^2$.
\end{lemma}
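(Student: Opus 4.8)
The plan is to establish $M$-connectedness of $G:=G_1\oplus_2 G_2$ from an explicit understanding of how circuits of ${\cal R}_2$ behave under $2$-sums --- conceptually, this is the statement that ${\cal R}_2(G_1\oplus_2 G_2)$ is the matroid $2$-sum of ${\cal R}_2(G_1)$ and ${\cal R}_2(G_2)$ together with the fact that a matroid $2$-sum of connected matroids is connected --- but I will argue directly with Laman counts to keep everything self-contained. Write $e_i=a_ib_i$, and after the identification set $a=a_1=a_2$ and $b=b_1=b_2$, so that $G$ is obtained by gluing $G_1-e_1$ and $G_2-e_2$ along the two vertices $a,b$, and $E(G)$ is the disjoint union of $E(G_1)\setminus\{e_1\}$ and $E(G_2)\setminus\{e_2\}$. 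Since $G_i$ is $M$-connected, every edge of $G_i$ --- in particular $e_i$ --- lies in a circuit of ${\cal R}_2(G_i)$. The task is to show that any two edges of $G$ lie in a common circuit of ${\cal R}_2(G)$.

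Two transfer facts drive the argument. \emph{(i) Restriction.} For $S\subseteq E(G_1)\setminus\{e_1\}$, the set $S$ is a circuit of ${\cal R}_2(G)$ if and only if it is a circuit of ${\cal R}_2(G_1)$; this holds because the rigidity matroid of a graph restricted to an edge set equals the rigidity matroid of the subgraph with that edge set, and the identification of $a_1,b_1$ with $a_2,b_2$ merges no two vertices of $G_1$, so the subgraph carried by $S$ is the same in $G$ as in $G_1$. The symmetric statement holds for $G_2$. \emph{(ii) Merging.} If $C_1$ is a circuit of ${\cal R}_2(G_1)$ with $e_1\in C_1$ and $C_2$ is a circuit of ${\cal R}_2(G_2)$ with $e_2\in C_2$, then $C:=(C_1\setminus\{e_1\})\cup(C_2\setminus\{e_2\})$ is a circuit of ${\cal R}_2(G)$. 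Fact (ii) is the heart of the matter.

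To prove (ii), let $H_i$ be the subgraph of $G_i$ with edge set $C_i$, so $|C_i|=2|V(H_i)|-2$ and $a,b\in V(H_i)$. Gluing at $\{a,b\}$ yields a subgraph $H$ of $G$ with $E(H)=C$, $|V(H)|=|V(H_1)|+|V(H_2)|-2$, and $|C|=2|V(H)|-2>2|V(H)|-3$; by Laman's criterion $C$ is dependent. For minimality it suffices, for each $g\in C$ --- say $g\in C_1\setminus\{e_1\}$ --- to show that $S:=C\setminus\{g\}$ is independent, i.e.\ that every edge set $F\subseteq S$ satisfies $|F|\le 2|V(F)|-3$. Decompose $F=F_1\sqcup F_2$ with $F_1\subseteq C_1\setminus\{e_1,g\}$ and $F_2\subseteq C_2\setminus\{e_2\}$; each $F_i$ is independent, being a proper subset of the circuit $C_i$, and $V(F_1)\cap V(F_2)\subseteq\{a,b\}$. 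In every case except $\{a,b\}\subseteq V(F_1)\cap V(F_2)$, the independence of $F_1$ and $F_2$ already forces $|F|\le 2|V(F)|-3$. In the remaining case the key point is that $F_1$ cannot be Laman-tight: if $|F_1|=2|V(F_1)|-3$ then $F_1$ is a basis of ${\cal R}_2$ on $V(F_1)$, hence rigid, so $e_1$ --- an edge inside $V(F_1)$ --- lies in $\cl(F_1)$ and $F_1\cup\{e_1\}$ is dependent; but $F_1\cup\{e_1\}\subseteq C_1\setminus\{g\}$, a proper (hence independent) subset of the circuit $C_1$, a contradiction. Thus $|F_1|\le 2|V(F_1)|-4$, and together with $|F_2|\le 2|V(F_2)|-3$ and $|V(F)|=|V(F_1)|+|V(F_2)|-2$ this again gives $|F|\le 2|V(F)|-3$, completing (ii). This minimality step --- controlling subgraphs that straddle the separator $\{a,b\}$ --- is the main obstacle, and the observation that a tight side-subgraph rigidly spans $a$ and $b$ and so clashes with the circuit structure is exactly what makes it go through.

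With (i) and (ii) in hand, $M$-connectedness follows easily. Given edges $f,f'$ of $G$: if $f,f'\in E(G_1)\setminus\{e_1\}$, take a circuit $C_1$ of ${\cal R}_2(G_1)$ containing both (it exists since $G_1$ is $M$-connected); if $e_1\notin C_1$, it is already a circuit of ${\cal R}_2(G)$ by (i), and if $e_1\in C_1$, pick any circuit $C_2$ of ${\cal R}_2(G_2)$ through $e_2$ and apply (ii). The case $f,f'\in E(G_2)\setminus\{e_2\}$ is symmetric. If $f\in E(G_1)\setminus\{e_1\}$ and $f'\in E(G_2)\setminus\{e_2\}$, choose circuits $C_1$ of ${\cal R}_2(G_1)$ with $\{e_1,f\}\subseteq C_1$ and $C_2$ of ${\cal R}_2(G_2)$ with $\{e_2,f'\}\subseteq C_2$ (using $M$-connectedness of $G_1$ and $G_2$); then by (ii), $(C_1\setminus\{e_1\})\cup(C_2\setminus\{e_2\})$ is a circuit of ${\cal R}_2(G)$ containing both $f$ and $f'$. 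Hence ${\cal R}_2(G)$ is connected, i.e.\ $G$ is $M$-connected. (One could shorten (ii) by invoking the known rank identity $\rank {\cal R}_2(G)=\rank {\cal R}_2(G_1+ab)+\rank {\cal R}_2(G_2+ab)-1$ for a $2$-separation with interface $\{a,b\}$, but I prefer the elementary route above.)
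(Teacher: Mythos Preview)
The paper does not prove this lemma; it is quoted from \cite{JJ05} (Lemma~3.3 there) and stated without proof, so there is no ``paper's own proof'' to compare against.

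Your argument is correct and is essentially the standard route. Facts~(i) and~(ii) together say precisely that the circuits of ${\cal R}_2(G_1\oplus_2 G_2)$ are the circuits of the matroid $2$-sum ${\cal R}_2(G_1)\oplus_2{\cal R}_2(G_2)$, and your final paragraph is the usual verification that a matroid $2$-sum of connected matroids is connected. The Laman-count proof of~(ii) is clean; the crucial observation---that a subgraph $F_1\subseteq C_1\setminus\{e_1,g\}$ containing both $a$ and $b$ cannot be tight, since tightness would force $e_1\in\cl(F_1)$ and hence make $F_1\cup\{e_1\}\subseteq C_1\setminus\{g\}$ dependent---is exactly the right lever, and you apply it correctly. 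One small remark: when you dismiss the cases $|V(F_1)\cap V(F_2)|\le 1$ as ``already'' giving $|F|\le 2|V(F)|-3$, you are implicitly assuming both $F_1$ and $F_2$ are nonempty (so that $|F_i|\le 2|V(F_i)|-3$ applies to each); the degenerate case where one of them is empty is of course immediate, but it would not hurt to say so explicitly.
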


\begin{lemma}[\cite{JJ05}, Lemma 3.4]
\label{lem:cleaving}
Suppose $G_1$ and $G_2$ are obtained from a $M$-connected graph $G$ in $\mathbb{R}^2$ 
by cleaving $G$ along a 2-separator.
Then $G_1$ and $G_2$ are $M$-connected in $\mathbb{R}^2$.
\end{lemma}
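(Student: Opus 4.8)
The plan is to prove that $G_1$ is $M$-connected; $G_2$ then follows by symmetry. Let $\{a,b\}$ be the 2-separator along which $G$ is cleaved, so that by construction $ab\in E(G_1)\cap E(G_2)$, and set $G^+=G+ab$ (so $G^+=G$ when $ab\in E(G)$ already). I would first record three easy facts. (i) $\{a,b\}$ is still a 2-separator of $G^+$ and cleaving $G^+$ along it again produces the pair $\{G_1,G_2\}$, so $G^+$ is exactly the graph obtained by gluing $G_1$ and $G_2$ along their common edge $ab$ (the graphic parallel connection). (ii) An $M$-connected graph is rigid, since every edge lies in a rigidity circuit and every rigidity circuit spans a rigid subgraph, so the union of all circuits is rigid and spanning. (iii) Consequently ${\cal R}_2(G^+)$ is connected: if $ab\notin E(G)$ then ${\cal R}_2(G^+)$ is obtained from the connected matroid ${\cal R}_2(G)$ by adjoining the edge $ab$, which is not a coloop (as $G$ is already rigid) and not a loop (being a genuine edge), and adjoining such an element to a connected matroid keeps it connected — the new element lies in a circuit, hence is $\sim$-equivalent to some old element and therefore to all of them.

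The heart of the argument is the identity
\[
{\cal R}_2(G^+)=P\bigl({\cal R}_2(G_1),{\cal R}_2(G_2)\bigr),
\]
the matroid parallel connection with basepoint $ab$, which I would prove by matching the two circuit families. First one checks that rigidity circuits in $\mathbb{R}^2$ are $2$-connected: a cut vertex would split such a circuit $C$ into two proper subgraphs, whose edge counts — being Laman-sparse — would add up to strictly less than $|E(C)|=2|V(C)|-2$. Now take a rigidity circuit $C$ of $G^+$. If $C\subseteq E(G_1)$ or $C\subseteq E(G_2)$ then, since the rigidity matroid is hereditary, $C$ is a circuit of the corresponding ${\cal R}_2(G_i)$, matching the first two circuit types of $P$. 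Otherwise $C$ has edges on both sides; a short Laman count rules out $ab\in C$ (one would get $|E(C)|\le 2|V(C)|-3$), so $C$ splits disjointly as $C_1'\cup C_2'$ with $C_i'=C\cap E(G_i-ab)$, and $C$ passes through both $a$ and $b$ by $2$-connectedness. The equality case of the same count then forces $|C_i'|=2|V(C_i')|-3$, i.e.\ each $C_i'$ is minimally rigid on its vertex set, and a further count shows $C_i'$ contains no proper minimally rigid subgraph spanning $\{a,b\}$ (such a subgraph together with $C_{3-i}'$ would be a dependent proper subgraph of the circuit $C$); hence $C_i'+ab$ is itself a rigidity circuit of $G_i$ through $ab$, and $C=((C_1'+ab)\setminus ab)\cup((C_2'+ab)\setminus ab)$ matches the third circuit type of $P$. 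Running the same counting backwards gives the reverse inclusion, that every circuit of $P$ is a rigidity circuit of $G^+$.

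Finally I would invoke the standard fact that a matroid parallel connection $P(N_1,N_2)$ at a basepoint is connected if and only if both $N_1$ and $N_2$ are connected — the non-basepoint elements of each $N_i$ are $\sim$-equivalent to the basepoint inside $N_i$, and circuits of $P$ meeting only one side cannot cross to the other, so a disconnection of one $N_i$ forces a disconnection of $P$. Here the degenerate cases do not occur: each of $G_1,G_2$ carries the edge $ab$ together with at least one further edge (the third vertex on each side is non-isolated in $G$ and has all its neighbours on that side). Since ${\cal R}_2(G^+)$ is connected, both ${\cal R}_2(G_1)$ and ${\cal R}_2(G_2)$ are connected, i.e.\ $G_1$ and $G_2$ are $M$-connected.

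I expect the main obstacle to be the identity ${\cal R}_2(G^+)=P({\cal R}_2(G_1),{\cal R}_2(G_2))$, and specifically the step that a rigidity circuit straddling $\{a,b\}$ restricts on each side to a subgraph which becomes a rigidity circuit after re-adjoining $ab$; this is precisely where the $2$-separator hypothesis enters (for a non-separating pair the analogue fails, e.g.\ for an edge of $K_4$). The remaining ingredients — heredity of the rigidity matroid, "$M$-connected $\Rightarrow$ rigid", the behaviour of matroid connectivity under adjoining a non-coloop, and the connectivity of parallel connections — are standard or routine Laman-type bookkeeping.
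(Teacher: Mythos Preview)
The paper does not prove this lemma; it is quoted from \cite{JJ05} and used as a black box in the proof of Lemma~\ref{lem:vertex3-edge}. So there is no in-paper argument to compare against.

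Your proposal is correct and follows the standard route behind the cited result: recognise ${\cal R}_2(G^+)$ as the matroid parallel connection $P\bigl({\cal R}_2(G_1),{\cal R}_2(G_2)\bigr)$ at the basepoint $ab$, and then read off connectivity of the two factors from connectivity of $P$. Two steps are slightly brisk but easily tightened. In (ii), ``the union of all circuits is rigid'' is not automatic, since unions of rigid subgraphs need not be rigid; the clean argument is that for any $u,v\in V(G)$ one can pick incident edges $e_u,e_v$ and, by $M$-connectivity, a single circuit $C$ containing both, whence $uv\in \cl(C)\subseteq \cl(E(G))$. For the reverse inclusion in the parallel-connection identity, ``running the counting backwards'' is not quite the mechanism: one first sees that $(D_1-ab)\cup(D_2-ab)$ is dependent by the edge count $2|V|-2$, and then rules out any proper subcircuit $C'$ by applying your forward analysis to $C'$ itself --- this forces $(C'\cap E(G_i))+ab$ to be a circuit contained in $D_i$, hence equal to $D_i$, so $C'$ was not proper after all. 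With these clarifications the argument is complete.
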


For a 2-connected graph $G=(V,E)$, 
a nonempty set $X\subset V$ is called a {\em fragment} if $|N_G(X)|=2$ and $V\setminus (X\cup N_G(X))\neq \emptyset$.
Now we are ready to prove Lemma~\ref{lem:vertex3-edge}.
\begin{proof}[Proof of Lemma~\ref{lem:vertex3-edge}]
By Theorem~\ref{thm:3-connected_M_connected} $G$ is $M$-connected.
Suppose that $G$ has no vertex of degree three.
By Lemma~\ref{lem:vertex3}, for any ear decomposition $C_1,\dots, C_t$ of $G$,
$V(C_t)\subseteq V(D_{t-1})$, where $D_{t-1}=\bigcup_{i=1}^{t-1}C_i$.
Hence, $|C_t|=1$ by Lemma~\ref{lem:vertex3}, and $G-e$ is $M$-connected for $e\in C_t$.

Let $E'=\{e\in E(G)\mid G-e \text{ is $M$-connected}\}$,
which is nonempty as we have just noted.
If $G-e$ is 3-connected for some $e\in E'$, then we are done.
Hence we assume that $G-e$ is not 3-connected for any $e\in E'$.

Let us take $e\in E'$ such that the size of the smallest fragment in $G-e$ is minimized 
over all $e\in E'$.
Let $(H_1,H_2)$ be the 2-separator of $G-e$ such that $V(H_1)\setminus V(H_2)$ is the smallest fragment,
and denote $\{a,b\}=V(H_1)\cap V(H_2)$.
By Lemma~\ref{lem:cleaving}, the graphs $H_1', H_2'$ obtained by cleaving along $ab$ 
are $M$-connected.

Let $v$ be the endvertex of $e$  in $H_1'$.
Let us take a circuit $C$ of ${\cal R}_2(H_1')$ 
such that $ab\in C$ and $v\in V(C)$.
Such a circuit exists since $H_1'$ is $M$-connected.
By Theorem~\ref{thm:ear}, there is an ear decomposition $C_1',\dots, C_s'$ of $H_1'$
with $C_1'=C$ for some $s\geq 1$.

If $s=1$, $H'_1$ is a circuit and has a vertex of degree three incident to neither $e$ nor $ab$,
implying that $G$ has a vertex of degree three, a contradiction.
If $s>1$, by Lemma~\ref{lem:vertex3} $H_1'$ has a vertex $u$ of degree three with $u\in V(C_s')\setminus V(C_1')$ or an edge $f\in V(C_s')$ such that $H_1'-f$ is $M$-connected.
In the former case  $u$ is incident to neither $e$ nor $ab$, implying that 
$G$ has a vertex of degree three, a contradiction.
In the later case, since $H_1'-f$ is $M$-connected, $G-f$ is $M$-connected by Lemma~\ref{lem:2sum}.
Hence $G-f$ is not 3-connected, and there is a separator in $G-f$.
However, since $f$ connects two vertices in $H_1$,
Lemma~\ref{lem:sep-cross} implies that a fragment in $G-f$ is properly contained in $V(H_1)\setminus V(H_2)$. This contradicts the choice of $e$.
\end{proof}

\subsection{Combining Two Graphs}
We next consider another operation for constructing globally rigid graphs from smaller graphs.
Let $G$ be a graph, $X$ be a subset of $V(G)$, and $H$ be a graph on $X$ (whose edges may not be in $G$).
We say that $(H,X)$ is a {\em rooted minor} of $(G,X)$ if 
$H$ can be obtained from $G$ by deleting and contracting edges of $G$,
i.e., there is a partition $\{U_v\mid v\in X\}$ of $V(G)$ into $|X|$ subsets, 
each of which is indexed by an element of $X$, such that 
(i) $v\in U_v$, (ii) the induced subgraph of $G$ by $U_v$ is connected for each $v\in X$,
and (iii) $G$ has an edge between a vertex in $U_u$ and a vertex in $U_v$ 
for each $uv\in E(H)$.

\begin{theorem}
\label{thm:comb}
Let $G_1$ and $G_2$ be graphs with $X=V(G_1)\cap V(G_2)$
and $H$ be a graph on $X$ whose edges may not be in $G_1\cup G_2$.
Suppose that 
\begin{itemize}
\item $|X|\geq d+1$,
\item $G_1$ is rigid in $\mathbb{R}^d$,
\item $(H,X)$ is a rooted minor of $(G_2,X)$.
\item $G_1\cup H$ and $G_2\cup K(X)$ are globally rigid or 
$G_1\cup K(X)$ and $G_2\cup H$ are globally rigid in $\mathbb{R}^d$.
\end{itemize}
Then $G_1\cup G_2$ is globally rigid in $\mathbb{R}^d$.
\end{theorem}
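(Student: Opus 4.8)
The plan is to build one generic realization $(G_1\cup G_2,\bp)$, prove it is globally rigid, and then apply Theorem~\ref{thm:generic}; the construction is modelled on the proof of Lemma~\ref{lem:1extension-extension}, the new twist being that the edges of $H$ are \emph{simulated} inside $G_2$ through the rooted minor. Assume first that $G_1\cup H$ and $G_2\cup K(X)$ are globally rigid; the other case is handled symmetrically at the end. I would start by fixing a generic $\bp_1\colon V(G_1)\to\mathbb{R}^d$ and writing $p_i=\bp_1(i)$. Since $G_1$ is rigid, Proposition~\ref{prop:key} gives that $c(G_1,\bp_1)$ is finite, hence for any two vertices $i,j\in X$ the quantity $\|q_i-q_j\|$ takes only finitely many values as $\bq$ ranges over realizations of $G_1$ equivalent to $\bp_1$. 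Consequently there is $\epsilon>0$ such that for every such $\bq$ and all $i,j\in X$, $\bigl|\,\|q_i-q_j\|-\|p_i-p_j\|\,\bigr|<\epsilon$ forces $\|q_i-q_j\|=\|p_i-p_j\|$. This $\epsilon$ is precisely the device used in Lemma~\ref{lem:1extension-extension}.

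Next I would extend $\bp_1$ to a generic $\bp\colon V(G_1)\cup V(G_2)\to\mathbb{R}^d$. Let $\{U_v\mid v\in X\}$ be the partition of $V(G_2)$ witnessing that $(H,X)$ is a rooted minor of $(G_2,X)$, so that each $U_v$ induces a connected subgraph of $G_2$, $U_v\cap X=\{v\}$, and for every $uv\in E(H)$ some edge of $G_2$ joins $U_u$ to $U_v$. I would place each vertex of $U_v\setminus X$ inside the ball $B(p_v,\delta)$ for a small $\delta>0$ (to be fixed below), choosing its coordinates algebraically independent over the field generated by the coordinates placed so far; since a ball is a nonempty open set this keeps $\bp$ generic, while $\bp(U_v)\subseteq B(p_v,\delta)$ for each $v\in X$.

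The crux, which I expect to be the main obstacle, is to control an arbitrary equivalent realization on these clusters. Let $\bq$ satisfy $(G_1\cup G_2,\bq)\simeq(G_1\cup G_2,\bp)$. Every edge of $G_2$ inside a set $U_v$ has length $<2\delta$ in $\bp$, hence also in $\bq$; as $U_v$ is connected in $G_2$, any two of its vertices are joined by a path of fewer than $|U_v|$ such edges, so $\mathrm{diam}\,\bq(U_v)<2\delta\,|U_v|$. For $uv\in E(H)$, pick a $G_2$-edge $st$ with $s\in U_u$, $t\in U_v$; then $\|q_s-q_t\|=\|p_s-p_t\|$, while $\bigl|\,\|q_u-q_v\|-\|q_s-q_t\|\,\bigr|$ and $\bigl|\,\|p_u-p_v\|-\|p_s-p_t\|\,\bigr|$ are each at most a constant multiple of $\delta\cdot\max_w|U_w|$. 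Choosing $\delta$ small enough that this total error is below $\epsilon$, and using that $\bq|_{V(G_1)}$ is a realization of $G_1$ equivalent to $\bp_1$, the separation property above forces $\|q_u-q_v\|=\|p_u-p_v\|$ for every $uv\in E(H)$. The real content here is that the tininess of the clusters, built into $\bp$, transfers to every equivalent realization \emph{only} because the branch sets $U_v$ are connected in $G_2$ — which is exactly why the rooted-minor hypothesis is the one needed, and one must also be a little careful to keep $\bp$ generic while clustering.

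To finish: $\bq$ now agrees with $\bp$ on all edges of $G_1\cup H$, and $\bp_1$ is generic, so since $G_1\cup H$ is globally rigid the framework $(G_1\cup H,\bp_1)$ is globally rigid, giving $\bq|_{V(G_1)}\equiv\bp|_{V(G_1)}$; in particular $\bq$ preserves every distance among vertices of $X$. Hence $\bq|_{V(G_2)}$ is a realization of $G_2\cup K(X)$ equivalent to the generic realization $\bp|_{V(G_2)}$, so $\bq|_{V(G_2)}\equiv\bp|_{V(G_2)}$ by global rigidity of $G_2\cup K(X)$. The two isometries implementing these two congruences agree on $\bp(X)$, which affinely spans $\mathbb{R}^d$ because $\bp$ is generic and $|X|\ge d+1$; therefore they coincide, so $\bq\equiv\bp$. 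Thus $(G_1\cup G_2,\bp)$ is globally rigid, and as $\bp$ is generic, $G_1\cup G_2$ is globally rigid by Theorem~\ref{thm:generic}. In the remaining case ($G_1\cup K(X)$ and $G_2\cup H$ globally rigid) the same construction works, except that one first applies global rigidity of $G_2\cup H$ to get $\bq|_{V(G_2)}\equiv\bp|_{V(G_2)}$ (so that $\bq$ preserves the distances within $X$) and then global rigidity of $G_1\cup K(X)$ to get $\bq|_{V(G_1)}\equiv\bp|_{V(G_1)}$.
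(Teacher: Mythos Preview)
Your proposal is correct and follows essentially the same approach as the paper's proof: both place the vertices of each branch set $U_v$ in a small ball around $p_v$, use the connectedness of $U_v$ in $G_2$ together with the rigidity of $G_1$ (via Proposition~\ref{prop:key}) to force $\|q_u-q_v\|=\|p_u-p_v\|$ for all $uv\in E(H)$, and then finish by successively applying the global rigidity hypotheses to pass from $H$-edges to all of $K(X)$. Your write-up is in places more careful than the paper's (e.g.\ explicitly keeping $\bp$ generic while clustering, and spelling out why the two isometries coincide via $|X|\ge d+1$), but the underlying argument is the same.
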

\begin{proof}
Let $\bp:V(G_1\cup G_2)\rightarrow \mathbb{R}^d$ be generic,
and $\bp_i$ be the restriction of $\bp$ to $V(G_i)$ for $i=1,2$.
Since $(H,X)$ is a rooted minor of $(G_2,X)$, 
there is a partition $\{U_v\mid v\in X\}$ of $V(G_2)$ into $|X|$ subsets, each of which is  indexed by 
an element of $X$,
such that, for each $xy\in E(H)$, there is a path $P_{xy}$ from $x$ to $y$ such that 
$P_{xy}$ is in  the induced subgraph of $G_2$ by $U_x\cup U_y$ 
and passes through an edge between $U_x$ and $U_y$ exactly once. 

We shall assume that, for each $w\in U_v$,  
$p_{w}\in B(p_v,\epsilon)$ holds for some $\epsilon>0$.
Due to the existence of $P_{xy}$ for each $xy\in E(H)$,  
there is a constant $C$ such that, 
for any $\bq_2$ with $(G_2,\bq_2)\simeq (G_2,\bp_2)$,
\begin{equation}
\label{eq:comb}
 |\|p_x-p_y\|-\|q_x-q_y\||\leq C|V(G_2)|\epsilon 
\end{equation}
for all $xy\in E(H)$.

Since $(G_1,\bp_1)$ is rigid, $c(G_1,\bp_1)$ is finite.
 Therefore, for each $xy\in E(H)$, there are finite number of possible distances between 
$q_x$ and $q_y$ over all $\bq$ with  $(G_1,\bq_1)\simeq (G_1,\bp_1)$.
Since the position of $p_w$ for $w\in V(G_2)\setminus X$ is independent from these distances,
if we take small enough $\epsilon$, we have 
\[
 \|p_x-p_y\|=\|q_x-q_y\| \qquad \text{for } xy\in E(H)
\]
for every $\bq$ with $(G_1\cup G_2,\bq)\simeq (G_1\cup G_2,\bp)$.
Therefore, $G_1\cup G_2$ is globally rigid in $\mathbb{R}^d$ 
if $G_1\cup G_2\cup H$ is globally rigid.

By assumption, for some $i\in \{1,2\}$,
$G_i\cup H$ is globally rigid.
However, since $G_1\cup G_2\cup H$ contains $G_i\cup H$ as a subgraph 
and $(G_i\cup H,\bp_i)$ is globally rigid in $\mathbb{R}^d$,
\[
 \|p_v-p_w\|=\|q_u-q_w\| \qquad \text{for all } u,w\in X
\]
for every $\bq$ with $(G_1\cup G_2\cup H,\bq)\simeq (G_1\cup G_2\cup H,\bp)$.
Thus $G_1\cup G_2\cup H$ is globally rigid if $G_1\cup G_2\cup K(X)$ is globally rigid.
Since $|X|\geq d+1$ and $G_1\cup K(X)$ and $G_2\cup K(X)$ are globally rigid,
$G_1\cup G_2\cup H$ is globally rigid.
This in turn implies the global rigidity of $(G,\bp)$. 
\end{proof}

Theorem~\ref{thm:comb} extends \cite[Theorem 11]{connelly2011combining},
which copes with the case when $|X|=d+1$, $|E(H)|=1$, $G_i\cup H$ are globally rigid for each $i=1,2$.
Indeed, in this case, $G_i\cup H$ is $(d+1)$-connected and redundantly rigid by Theorem~\ref{thm:hendrickson}. 
So $G_i$ is rigid. Denoting the edge in $E(H)$ by $ab$,  $G_2-(X\setminus \{a,b\})$ is connected.
Hence $(H,X)$ is a rooted minor of $(G,X)$.

Notice also that Theorem~\ref{thm:comb} contains Theorem~\ref{thm:connelly} as a special case.
In this sense, Theorem~\ref{thm:comb} is a common generalization of 
the 1-extension theorem and the composition theorem \cite[Theorem 11]{connelly2011combining} by Connelly.

\section{Global Rigidity of Body-hinge Frameworks}
In this section we consider body-hinge frameworks.
A body-hinge framework is a structural model consisting of rigid bodies connected by hinges.
A hinge is a $(d-2)$-dimensional affine space which links two bodies, and the bodies can rotate around the hinge.

\begin{figure}[t]
\centering
\begin{minipage}{0.4\textwidth}
\centering
\includegraphics[scale=0.5]{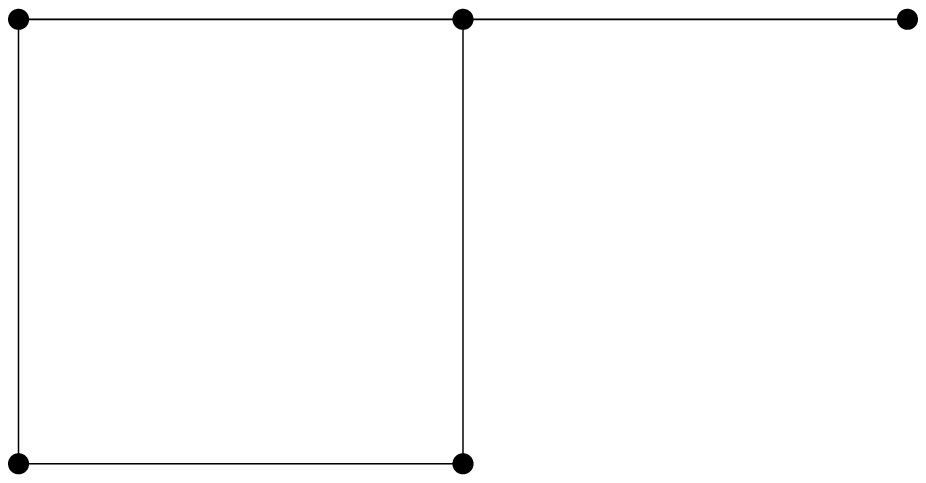}
\par
(a)
\end{minipage}
\begin{minipage}{0.4\textwidth}
\centering
\includegraphics[scale=0.5]{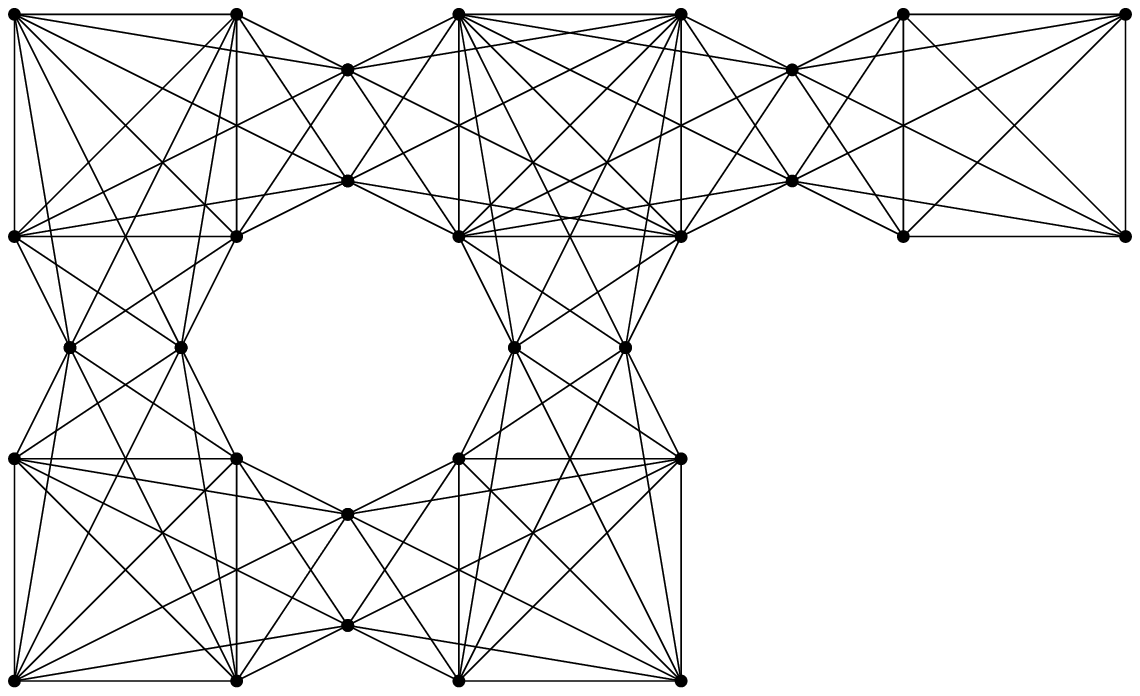}
\par
(b)
\end{minipage}
\caption{(a) $H$ and (b) $G_H$ in $\mathbb{R}^3$.}
\label{fig:body-hinge}
\end{figure}

Given a body-hinge framework, where each hinge connects two bodies, we shall define the underlying graph $H$ by associating a vertex with a body and an edge with each hinge.
As in the case of body-bar frameworks one can regard a body-hinge framework as a bar-joint framework by replacing each rigid body with a complete bar-joint framework. 
Since each hinge is a $(d-2)$-dimensional affine subspace, we are interested in the rigidity of the associated {\em body-hinge graph} $G_H=(V_H,E_H)$ defined as follows:
\begin{itemize}
\item $V_H=(\bigcup_{v\in V(H)} B_v)\cup (\bigcup_{e\in E(H)} H_e)$, where
$B_v=\{v_1,\dots, v_{d+1}\}$ for each $v\in V(H)$ (which induces the body associated with $v$) 
and $H_e=\{h_{e,1}, h_{e,2},\dots, h_{e,d-1}\}$ for each $e\in E(H)$ (which induces the hinge of $e$);
\item $E_H=(\bigcup_{v\in V(H)} K(B_v))\cup (\bigcup_{e \in E(v)} K(B_v, H_e))$, 
where $E(v)$ denotes the set of edges incident to $v$.
\end{itemize}
Figure~\ref{fig:body-hinge} provides an example.
A bar-joint framework $(G_H,\bp)$ with $\bp:V_H\rightarrow \mathbb{R}^d$ is called a {\em body-hinge realization} 
of $H$ in $\mathbb{R}^d$.
We are interested in the rigidity/global rigidity of $G_H$ for a given $H$.

For a graph $H$ and a positive integer $k$, let $kH$ denote the graph obtained from $H$ by replacing each edge by $k$ parallel copies. For each edge $e\in E(H)$, $ke$ denote the set of the $k$ copies of $e$ in $kH$.  
Tay~\cite{tay:89,tay:91} and Whiteley~\cite{whiteley:88} independently gave a combinatorial characterization of 
the rigidity of body-hinge frameworks.

\begin{theorem}[Tay~\cite{tay:89, tay:91}, Whiteley~\cite{whiteley:88}]
\label{thm:rigidity-body-hinge}
Let $G_H$ be the body-hinge graph of a graph $H$.
Then $G_H$ is rigid  in $\mathbb{R}^d$ if and only if 
$({d+1\choose 2}-1)H$ contains ${d+1\choose 2}$ edge-disjoint spanning trees.
\end{theorem}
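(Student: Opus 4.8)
The plan is to reduce the statement to Tay's body-bar theorem (Theorem~\ref{thm:tay}) by showing that, for a generic realization, a hinge imposes exactly the constraints of ${d+1\choose 2}-1$ coincident bars placed generically on the hinge flat. Write $D={d+1\choose 2}$ for the dimension of the space of infinitesimal isometries of $\mathbb{R}^d$. First I would replace the bar-joint rigidity matrix $R(G_H,\bp)$ by a reduced \emph{body-hinge matrix}: for generic $\bp$ every body $B_v$ is infinitesimally rigid, so an infinitesimal motion of $(G_H,\bp)$ restricts to a screw $m_v\in\mathbb{R}^D$ on each $B_v$, and since every $h\in H_e$ is joined to all of $B_u$ and all of $B_v$ (for $e=uv$) it is pinned relative to both bodies; hence the hinge forces the two screws to agree on the $(d-2)$-flat $\ell_e$ through $\bp(h_{e,1}),\dots,\bp(h_{e,d-1})$, that is, $m_u-m_v$ lies in the line $\langle r_e\rangle$ spanned by the rotation about $\ell_e$, equivalently $m_u-m_v$ is annihilated by the hyperplane $W_e:=r_e^{\perp}\subseteq\mathbb{R}^D$. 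Thus $G_H$ is generically rigid if and only if the matrix having $D$ columns per vertex and, for each $e=uv\in E(H)$, the $D-1$ rows $(\delta_u-\delta_v)\otimes w$ ($w$ ranging over a basis of $W_e$), attains rank $D(|V(H)|-1)$ for generic flats $\ell_e$; the $D$-dimensional space of constant screws always lies in its kernel, so this is the maximum conceivable rank. It therefore suffices to prove that this generic rank equals $D(|V(H)|-1)$ precisely when $({d+1\choose 2}-1)H=(D-1)H$ contains $D$ edge-disjoint spanning trees.

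For the necessity of the tree condition I would note that the body-hinge matrix is a specialization of the fully generic body-bar rigidity matrix of the multigraph $(D-1)H$: choosing the $D-1$ constraint vectors attached to the copies of $e$ to form a basis of $W_e$ is a restriction of the generic choice of $D-1$ vectors in the $D$-dimensional space $\{(\delta_u-\delta_v)\otimes x:x\in\mathbb{R}^D\}$, so by lower semicontinuity of rank the generic body-hinge rank is at most the generic body-bar rank of $(D-1)H$. Hence, if $(D-1)H$ has no $D$ edge-disjoint spanning trees, then the body-bar graph of $(D-1)H$ is not generically rigid by Theorem~\ref{thm:tay}, its generic rank is strictly below $D(|V(H)|-1)$, and therefore $G_H$ is not generically rigid either.

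For sufficiency I would exhibit a single realization of $G_H$ attaining rank $D(|V(H)|-1)$; since the rank is maximized on generic realizations, the generic realization then attains it, and by Asimov and Roth's characterization $G_H$ is rigid. Fix $D$ edge-disjoint spanning trees $T_1,\dots,T_D$ of $(D-1)H$. Each $e\in E(H)$ has exactly $D-1$ copies in $(D-1)H$, lying in $D-1$ distinct trees (no tree can contain two copies of $e$), so there is a unique index $j(e)$ with no copy of $e$ in $T_{j(e)}$. Next I would fix a basis $\chi_1,\dots,\chi_D$ of $\mathbb{R}^D$ in which every $\chi_i$ is the screw of a rotation about some affine $(d-2)$-flat $\ell^{(i)}$. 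Realizing the hinge of each $e=uv$ on $\ell^{(j(e))}$ gives $W_e=\langle\chi_{j(e)}\rangle^{\perp}$, and choosing $\{\chi_i:i\neq j(e)\}$ as the basis of $W_e$ turns the body-hinge matrix into a block-diagonal matrix indexed by $i\in\{1,\dots,D\}$: the $i$-th block has a row $\delta_u-\delta_v$ for every $e=uv\in E(H)$ with $j(e)\neq i$, that is, for every edge of $T_i$. As $T_i$ is a spanning tree these incidence rows are independent, so the $i$-th block has rank $|V(H)|-1$ and the whole matrix has rank $D(|V(H)|-1)$, as wanted.

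The step I expect to be the main obstacle — and the only genuinely geometric input — is the existence of such a basis $\chi_1,\dots,\chi_D$ of hinge screws of affine flats. It reduces to the fact that these screws span $\mathbb{R}^D$: the screws of rotations about $(d-2)$-flats through the origin already span the ${d\choose 2}$-dimensional space of infinitesimal rotations, and subtracting such a screw from the screw of a rotation about a parallel affine flat produces an arbitrary infinitesimal translation, so the remaining $d$ dimensions are covered as well. A spanning set of $\mathbb{R}^D$ contains a basis, and since the hinge screws form the cone over an irreducible variety whose points ``at infinity'' are a proper closed subset, the basis can be taken to consist of screws of genuine affine flats; for $d=3$ this is immediate, the hinge screws being exactly the Klein quadric, which in split Pl\"ucker coordinates already contains the standard basis. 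Everything else — the passage to the reduced matrix, the semicontinuity estimate, and the tree-packing bookkeeping — is routine given Theorem~\ref{thm:tay}; alternatively, invoking the Nash--Williams--Tutte arboricity theorem makes the argument self-contained and yields the necessity direction directly through a degrees-of-freedom count on $(G_H,\bp)$ after contracting the parts of a violating partition.
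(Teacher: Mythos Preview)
The paper does not give its own proof of Theorem~\ref{thm:rigidity-body-hinge}; it is quoted as a known result of Tay and Whiteley and used as a black box. So there is no proof in the paper to compare your attempt against. That said, your sketch is essentially Whiteley's original argument, and the block-diagonalisation you propose for sufficiency is exactly the mechanism the paper exploits in its proof of the neighbouring Theorem~\ref{thm:body-hinge}: there too one fixes $D={d+1\choose 2}$ spanning trees, assigns to each edge the one ``missing'' tree index, realises all bodies at the simplex $\{{\bm e}_1,\dots,{\bm e}_{d+1}\}$, and reads off that the infinitesimal-motion equations decouple across the $D$ coordinates of the screw. Your choice of a basis $\chi_1,\dots,\chi_D$ of hinge screws is the coordinate-free version of the same trick.

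One point deserves tightening. In your necessity paragraph you say the body-hinge matrix is a specialization of the ``fully generic body-bar rigidity matrix'' and then invoke Theorem~\ref{thm:tay}. But the generic body-bar matrix does \emph{not} have fully generic rows $(\delta_u-\delta_v)\otimes x$ with $x\in\mathbb{R}^D$: the bar constraint vector is the extensor of an affine line and lies on the Grassmann cone, so semicontinuity in the direction you need (body-hinge rank $\le$ body-bar rank) requires the extra geometric fact that the hyperplane $W_e=r_e^{\perp}$ is spanned by $D-1$ such line extensors. This is true (it is one of Whiteley's observations --- the lines meeting $\ell_e$ do the job), but you have not argued it. Your closing remark is the clean fix: bypass Theorem~\ref{thm:tay} altogether and use Nash--Williams--Tutte to bound the rank of the \emph{abstract} matrix with arbitrary row vectors $x\in\mathbb{R}^D$, for which the specialisation step is immediate. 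With that adjustment the sketch is sound.
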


Connelly and Whiteley~\cite{connelly2010cone} conjectured that $K_{5,5}$ is the only 
4-connected redundantly rigid graph in $\mathbb{R}^3$ that is not globally rigid.
Observe now that, for a graph $H$,  if $G_H$ is rigid, then $G_H$ is 4-connected and redundantly rigid unless $|V(H)|=1$.
Therefore,  a special case\footnote{This was stated as a conjecture in the draft version (arXiv:1403.3742v1).} of the conjecture by Connelly and Whiteley
states that $G_H$ is rigid in $\mathbb{R}^3$ 
if and only if $G_H$ is globally rigid in $\mathbb{R}^3$.
Recently, counterexamples for this special case were discovered~\cite{JKT}, which also implies the existence of counterexamples for the Connelly-Whiteley conjecture.
 
Also in \cite{JKT} a corrected combinatorial characterization of the global rigidity of body-hinge frameworks were also presented. Here we shall show that a slightly weaker statement follows from Theorem~\ref{thm:vertex-redundant}.
\begin{theorem}
\label{thm:body-hinge}
Let $G_H$ be the body-hinge graph of a graph $H$.
If $({d+1\choose 2}-1)(H-e)+({d+1\choose 2}-3)e$ contains ${d+1\choose 2}$ edge-disjoint spanning trees for every $e\in E(H)$, then $G_H$ is globally rigid in $\mathbb{R}^d$.
\end{theorem}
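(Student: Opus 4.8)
The plan is to reduce Theorem~\ref{thm:body-hinge} to the vertex-redundant rigidity criterion (Theorem~\ref{thm:vertex-redundant}) by showing that the hypothesis on edge-disjoint spanning trees forces $G_H$ to be vertex-redundantly rigid in $\mathbb{R}^d$. Concretely, I would fix an arbitrary vertex $w$ of $G_H$ and argue that $G_H - w$ is rigid in $\mathbb{R}^d$; by Theorem~\ref{thm:vertex-redundant} this immediately yields global rigidity. The vertices of $G_H$ come in two flavours --- \emph{body vertices} lying in some $B_v$ and \emph{hinge vertices} lying in some $H_e$ --- so the deletion analysis splits into two cases, and in each case the goal is to massage $G_H - w$ (by substituting isometries of the remaining bodies, i.e.\ using the fact that each $B_v$ still spans an affinely independent body, and by applying Theorem~\ref{thm:rigidity-body-hinge}) into a rigidity count of the Tay--Whiteley form on a suitable multigraph.

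\textbf{Deleting a hinge vertex.} Suppose $w \in H_e$ for $e = uv \in E(H)$. Then $G_H - w$ still contains all bodies $B_x$ intact, and the hinge $H_e$ now has only $d-2$ vertices, so the pair $(B_u, B_v, H_e \setminus w)$ behaves like a hinge connection realized with one fewer ``hinge joint.'' The effect on the rigidity count is that the connection along $e$ now contributes as if $e$ carried ${d+1 \choose 2} - 2$ copies rather than ${d+1\choose 2}-1$ copies in the Tay--Whiteley multigraph. (The general rule: a joint shared by both bodies along a hinge adds $d$ to the rank at that connection, and a body-hinge connection realized with $d-1$ shared joints realizes a full hinge, which corresponds to ${d+1\choose 2}-1$ parallel copies; dropping to $d-2$ shared joints drops the contribution by one copy.) Hence $G_H - w$ is rigid precisely when $({d+1\choose 2}-1)(H - e) + ({d+1\choose 2} - 2)e$ contains ${d+1 \choose 2}$ edge-disjoint spanning trees, and this follows a fortiori from the hypothesis, which only requires this for the weaker multigraph $({d+1\choose 2}-1)(H-e)+({d+1\choose 2}-3)e$.

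\textbf{Deleting a body vertex.} Suppose $w \in B_v$ for $v \in V(H)$. Then $B_v \setminus w$ still has $d$ vertices and, at a generic realization, still spans an affine hyperplane, but it no longer forms a rigid body on its own; the point is that each hinge $H_e$ with $e$ incident to $v$ is still completely joined to $B_v \setminus w$, i.e.\ to $d$ points, and $d$ joints shared between a body and a hyperplane still pin down a $(d-2)$-flat correctly, so the hinges of $v$ still act as genuine hinges. The subtlety is whether $B_v \setminus w$ together with the attached hinges still behaves rigidly with respect to the rest of $G_H$; here one uses that $v$ is effectively still present but contributes $d$ fewer degrees of freedom while its incident hinges each lose contributions --- and the cleanest accounting is to observe that removing one body vertex from $B_v$ and compensating is exactly equivalent, count-wise, to decreasing by $2$ the multiplicity on \emph{one} edge $e$ incident to $v$ (since $d(v) \ge 2$ is forced by connectivity when $|V(H)| > 1$, and one may route the deficiency through a single incident edge). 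This is where the exact shift from $-1$ to $-3$ on a single edge in the hypothesis is used: it guarantees rigidity of $G_H - w$ for body-vertex deletions just as the $-2$ shift sufficed for hinge-vertex deletions, with room to spare.

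\textbf{The main obstacle} is the bookkeeping in the body-vertex case: translating ``delete one vertex from the body $B_v$'' into a clean statement about edge-disjoint spanning trees in a modified Tay--Whiteley multigraph. One must verify that a generic realization of $G_H - w$ has the claimed rigidity matroid rank, which really amounts to re-running the Tay--Whiteley argument (or its matroid-union formulation) with one body vertex suppressed; the infinitesimal-rigidity computation that a body attached to a hinge by $d$ joints rather than $d+1$ still transmits the correct $(d-2)$-dimensional rotational freedom is the technical heart. Once that local computation is pinned down, both cases reduce to a monotonicity observation --- fewer required spanning trees, or a smaller multigraph, follows from the stated hypothesis --- and Theorem~\ref{thm:vertex-redundant} finishes the proof.
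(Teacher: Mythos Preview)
Your high-level plan is exactly the paper's: reduce via Theorem~\ref{thm:vertex-redundant} to showing that $G_H$ is vertex-redundantly rigid, then treat body vertices and hinge vertices separately. But you have the two cases inverted in difficulty, and your constraint count in the hinge case is off by one.

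In the hinge-vertex case, passing from $d-1$ shared joints to $d-2$ shared joints replaces a $(d-2)$-dimensional hinge by a $(d-3)$-dimensional one; the stabiliser in the isometry group grows from dimension ${2\choose 2}=1$ to ${3\choose 2}=3$, so the number of independent constraints drops by \emph{two}, not one. The correct multiplicity on $e$ after deleting one hinge vertex is therefore ${d+1\choose 2}-3$, which is \emph{exactly} the hypothesis, not a weakening of it --- your ``a fortiori'' claim is false and there is no slack. Moreover, the assertion that ``$G_H-w$ is rigid precisely when the corresponding multigraph contains ${d+1\choose 2}$ edge-disjoint spanning trees'' is a Tay--Whiteley-type statement for frameworks with hinges of mixed dimension, which you neither prove nor cite. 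The paper sidesteps this entirely: it establishes the infinitesimal rigidity of $G_H-h_{e,d-1}$ by an explicit construction, placing all vertices at the standard simplex points ${\bm e}_1,\dots,{\bm e}_{d+1}$ and verifying directly, via the spanning-tree decomposition $\{T_{i,j}:1\le i<j\le d+1\}$, that every infinitesimal motion is trivial.

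The body-vertex case, which you identify as the main obstacle and the place where the shift to $-3$ is genuinely used, is in fact trivial and needs nothing beyond the ordinary Tay--Whiteley condition. For $v_i\in B_v$ and any hinge vertex $h$ adjacent to $B_v$, the set $B_v\cup\{h\}$ induces a $K_{d+2}$, a rigidity circuit; hence every edge $v_ih$ is redundant, and after deleting all of them $v_i$ retains exactly its $d$ neighbours in $B_v\setminus\{v_i\}$. Thus $G_H-v_i$ is rigid if and only if $G_H$ is (inverse $0$-extension), and $G_H$ is rigid by Theorem~\ref{thm:rigidity-body-hinge} since the hypothesis immediately implies that $({d+1\choose 2}-1)H$ contains ${d+1\choose 2}$ edge-disjoint spanning trees. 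The paper does not spell this out and simply asserts that checking hinge vertices suffices.
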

\begin{proof}
By Theorem~\ref{thm:vertex-redundant} it suffices to show that $G_H$ is vertex-redundantly rigid.
To this end, it suffices to show the rigidity of 
$G_H-h_{e,d-1}$ for each $e\in E(H)$ (where recall that $h_{e,d-1}$ is a vertex in $V_H$ associated with $e$).
Let $G_1=G_H-h_{e,d-1}$.

Intuitively, $(G_1,\bp)$ is a framework 
in which bodies are linked by hinges except that the hinge associated with edge $e$ is replaced with a $(d-3)$-dimensional affine space
(i.e., the bodies incident to the hinge of $e$ are linked at points $p_{h_{e,1}},\dots, p_{h_{e,d-2}}$).
We now show that $G_1$ is rigid. The proof idea is essentially from \cite{whiteley:88, JJT14}.

By the lemma assumption $({d+1\choose 2}-1)(H-e)+({d+1\choose 2}-3)e$ contains ${d+1\choose 2}$ edge-disjoint spanning trees $T_{i,j}, 1\leq i<j\leq d+1$.
Without loss of generality we may assume that 
\begin{equation}
\label{eq:hinge}
\left(\left({d+1\choose 2}-3\right)e\right)\cap \left(T_{d-1,d}\cup T_{d-1,d+1}\cup T_{d,d+1}\right)=\emptyset.
\end{equation}

Let ${\bm e}_1,\dots, {\bm e}_d$ be the standard basis of $\mathbb{R}^d$, and for simplicity of description the origin of $\mathbb{R}^d$ is denoted by ${\bm e}_{d+1}$.
We shall define $\bp:V(G_H)\rightarrow \mathbb{R}^d$ as follows.
For each $v_i\in B_v$ let $\bp(v_i)={\bm e}_i$.
For each $f\in E(H)\setminus \{e\}$, there is at least one pair $(k,l)$ of indices such that 
$(({d+1\choose 2}-1)f)\cap T_{k,l})=\emptyset$. Hence one can define $\bp(h_{f,i})$ such that 
$\{\bp(h_{f,i})\mid 1\leq i\leq d-1\}=\{{\bm e}_i\mid 1\leq i\leq d+1, i\neq k,l\}$. 
Also, for $e$, we shall define $\bp(h_{e,i})$ such that 
$\{\bp(h_{f,i})\mid 1\leq i\leq d-2\}=\{{\bm e}_i\mid 1\leq i\leq d-2\}$.

Now we show that $(G_1,\bp)$ is infinitesimally rigid in $\mathbb{R}^d$.
To this end let us take any infinitesimal motion $\bmm:V(G_H)\rightarrow \mathbb{R}^d$ of $(G_H,\bp)$. 
The proof is completed if we can show  $\bmm$ is trivial, i.e., 
there is a $d\times d$ skew-symmetric matrix $S$ and 
$t\in \mathbb{R}^d$ such that $\bmm(v)=S\bp(v)+t$ for each $v\in V_H$.
Since $\{\bp(v_i)\mid v_i\in B_v\}$ affinely spans $\mathbb{R}^d$ and $B_v$ induces the complete graph, for each $v\in V(H)$, there is a $d\times d$ skew-symmetric matrix $S_v$ and $t_v\in \mathbb{R}^d$ such that $\bmm(v_i)=S_v\bp(v_i)+t_v$ for every $v_i\in B_v$ and 
 $\bmm(h_{f,j})=S_v\bp(h_{f,j})+t_v$ for every $h_{f,j}$ incident to $B_v$.
 
 For $1\leq i<j\leq d+1$, consider any edge $f=uv\in T_{i,j}$.
 If $f\neq e$, by the definition of $\bp$, there is at least one index $k$ with $1\leq k\leq d-1$ such that 
 $\bp(h_{f,k})$ is either ${\bm e}_i$ or ${\bm e}_j$. 
 Similarly, if $f=e$, there is $k$ with $1\leq k\leq d-2$ such that 
 $\bp(h_{f,k})$ is either ${\bm e}_i$ or ${\bm e}_j$ by (\ref{eq:hinge}).
 Therefore, one can take a vertex $v_l$ from $B_v$ such that 
 $\{\bp(h_{f,k}), \bp(v_l)\}=\{{\bm e}_i, {\bm e}_j\}$.

Let us assume $\bp(h_{f,k})={\bm e}_i$ and $\bp(v_l)={\bm e}_j$.
The first-order length constraint by edge $h_{f,k}v_l$ implies
\begin{align}
\nonumber
0&=\langle \bp(h_{f,k})-\bp(v_l), \bmm(h_{f,k})-\bmm(v_l)\rangle \\ \nonumber
&=\langle \bp(h_{f,k})-\bp(v_l), S_u\bp(h_{f,k})+t_u-S_v\bp(v_l)-t_v\rangle \\ \nonumber
&=\langle {\bm e}_i-{\bm e}_j, S_u{\bm e}_i+t_u-S_v{\bm e}_j-t_v\rangle \\
&=-{\bm e}_j^{\top}S_u{\bm e}_i-{\bm e}_i^{\top}S_v{\bm e}_j+\langle {\bm e}_i-{\bm e}_j, t_u-t_v\rangle. 
\label{eq:hinge2}
\end{align}
 This equation follows even when $\bp(h_{f,k})={\bm e}_j$ and $\bp(v_l)={\bm e}_i$ by changing the role of 
 $u$ and $v$.
 
For $j=d+1$, since ${\bm e}_{d+1}=0$, (\ref{eq:hinge2}) implies 
\begin{equation*}
\langle {\bm e}_i, t_u-t_v\rangle=0 \qquad \text{ for } 1\leq i\leq d \text{ and } uv\in T_{i,d+1}.
\end{equation*}
This implies $t_a=t_b$ for any pair $a, b\in V(H)$ since $T_{i,d+1}$ is a spanning tree. 
Therefore, using the skew-symmetry of $S_v$, (\ref{eq:hinge2}) becomes
\begin{equation*}
{\bm e}_i^{\top}S_v{\bm e}_j={\bm e}_i^{\top}S_u{\bm e}_j \qquad \text{ for } 1\leq i<j\leq d \text{ and } uv \in T_{i,j}.
\end{equation*}
This in turn implies $S_a=S_b$ for any pair $a, b\in V(H)$. 
Thus $\bmm$ is trivial.
\end{proof}

One important corollary of Theorem~\ref{thm:body-hinge} is the following.
\begin{corollary}
\label{cor:body-hinge}
Let $G_H$ be the body-hinge graph of a graph $H$. 
If $G_H$ is {\em hinge-redundantly rigid}
(i.e., $G_{H-e}$ is rigid for any $e\in E(H)$), then $G_H$ is globally rigid.
\end{corollary}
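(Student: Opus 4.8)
The plan is to deduce Corollary~\ref{cor:body-hinge} from Theorem~\ref{thm:body-hinge} by showing that the hinge-redundancy hypothesis implies the tree-packing hypothesis appearing in that theorem. Concretely, I would fix $e\in E(H)$ and argue that $\left({d+1\choose 2}-1\right)(H-e)+\left({d+1\choose 2}-3\right)e$ contains ${d+1\choose 2}$ edge-disjoint spanning trees. The starting point is the assumption that $G_{H-e}$ is rigid, which by Theorem~\ref{thm:rigidity-body-hinge} is equivalent to $\left({d+1\choose 2}-1\right)(H-e)$ containing ${d+1\choose 2}$ edge-disjoint spanning trees $T_{i,j}$. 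These trees live on the vertex set of $H-e$, which equals $V(H)$ (deleting an edge does not remove vertices), so each $T_{i,j}$ is already a spanning tree of $H$ that avoids $e$ entirely.

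First I would observe that the graph $\left({d+1\choose 2}-1\right)(H-e)+\left({d+1\choose 2}-3\right)e$ is obtained from $\left({d+1\choose 2}-1\right)(H-e)$ by adding back ${d+1\choose 2}-3$ parallel copies of $e$. Since the ${d+1\choose 2}$ spanning trees $T_{i,j}$ guaranteed by rigidity of $G_{H-e}$ already use no copy of $e$, they are a fortiori ${d+1\choose 2}$ edge-disjoint spanning trees of the larger graph, with the extra copies of $e$ simply left unused. Hence the hypothesis of Theorem~\ref{thm:body-hinge} is satisfied for this $e$, and since $e$ was arbitrary it holds for every $e\in E(H)$. Theorem~\ref{thm:body-hinge} then immediately yields that $G_H$ is globally rigid in $\mathbb{R}^d$.

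The only subtlety worth spelling out is the degenerate case $|V(H)|=1$: if $H$ has a single vertex and possibly some loops, then $G_H$ is a complete graph (a single body with attached hinge-vertices) and is globally rigid trivially, so one may assume $|V(H)|>1$, in which case $H-e$ still spans $V(H)$ and the argument above applies verbatim. There is essentially no real obstacle here; the corollary is a direct unwinding of definitions, and the one place to be careful is merely to note that "hinge-redundantly rigid" gives rigidity of $G_{H-e}$ for every $e$, which via Theorem~\ref{thm:rigidity-body-hinge} supplies exactly the trees needed, with room to spare for the two fewer copies of $e$ demanded by Theorem~\ref{thm:body-hinge}. I would present the proof in two or three sentences along these lines.
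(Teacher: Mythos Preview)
Your proposal is correct and follows the route the paper intends: the paper states Corollary~\ref{cor:body-hinge} immediately after Theorem~\ref{thm:body-hinge} without proof, and the natural derivation is exactly the one you give---apply Theorem~\ref{thm:rigidity-body-hinge} to $H-e$ to get ${d+1\choose 2}$ edge-disjoint spanning trees in $\left({d+1\choose 2}-1\right)(H-e)$, observe these sit inside $\left({d+1\choose 2}-1\right)(H-e)+\left({d+1\choose 2}-3\right)e$, and invoke Theorem~\ref{thm:body-hinge}. The degenerate case and the observation that the extra copies of $e$ are simply unused are handled correctly.
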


It is also natural to consider a more general body-hinge model where each hinge may connect more than two bodies. Such a framework is called an {\em identified body-hinge framework}.
The underlying graph becomes a hypergraph (or the corresponding bipartite graph),
and a combinatorial characterization of the rigidity is known in terms of a 
count condition of the underlying graphs~\cite{tay:89,tay:91,tan}.
The argument can be easily generalized to show the counterpart of Corollary~\ref{cor:body-hinge} for identified body-hinge frameworks.

A challenging  open problem is to show a characterization for {\em panel-hinge frameworks} 
or {\em hinge-concurrent frameworks},
where in a panel-hinge framework 
the hinges incident to a body lie on  a 2-dimensional affine space,
and in a hinge-concurrent framework 
all the hinges incident to a body intersect at a point.
It is known that in $\mathbb{R}^3$ the infinitesimal rigidity of such frameworks coincides with that of {\em molecular frameworks}, which are bar-joint frameworks whose underlying graphs are the square $G^2$ of graphs $G$.
A combinatorial characterization of the generic rigidity of those frameworks was shown in \cite{KT11}.

\section{Concluding Remarks}
The global rigidity of graphs in complex spaces was analyzed by Jackson and Owen~\cite{JO12} 
and independently by Gortler and Thurston~\cite{gortler2012}.
Bill Jackson pointed out that the proof of Theorem~\ref{thm:body-bar} can be applied even in 
the complex setting.

It is a long standing open problem whether there is a constant connectivity upper bound for the rigidity of graphs in $\mathbb{R}^d$ for $d>2$. Tibor Jord{\'a}n pointed out that the existence of such a bound immediately implies a constant connectivity upper bound for the global rigidity by Theorem~\ref{thm:vertex-redundant}. 

One of the challenging problems in this context  is to determine which 4-connected redundantly rigid graphs are globally rigid in $\mathbb{R}^3$.
Currently it is not clear how large a subclass of 4-connected redundantly rigid graphs we  can construct 
by the operations of Lemma~\ref{lem:1extension-extension} and Theorem~\ref{thm:comb}. 

\section*{Acknowledgment}
We would like to thank Bill Jackson, Tibor Jord{\'a}n, and referees for helpful comments and corrections.
We would also like to thank Tibor Jord{\'a}n for a discussion about applications of Theorem~\ref{thm:vertex-redundant}.

\appendix

\section{Proof of Theorem~\ref{thm:main3}}
\begin{proof}
Let $N_G(v)=\{u_0,u_1,\dots, u_t\}$ and let $\epsilon>0$.
We extend $\bp':V\setminus \{v\}\rightarrow \mathbb{R}^d$ to $\bp:V\rightarrow \mathbb{R}^d$ such that
$\bp$ is generic and 
$p_v\in B(p_{u_0},\epsilon)$.
Since $c(G-v,\bp')$ is finite, there is $\epsilon'$ such that, for any $\bq$ with $(G,\bq)\simeq (G,\bp)$, 
$|\|p_{u_i}-p_{u_0}\|-\|q_{u_i}-q_{u_0}\||\leq \epsilon'$ implies $\|p_{u_i}-p_{u_0}\|=\|q_{u_i}-q_{u_0}\|$ for $1\leq i\leq t$. Therefore, if $\epsilon$ is small enough, $(G,\bp)$ is globally rigid if and only if $(G_1,\bp)$ is globally rigid, where $G_1$ is the graph obtained from $G$ by adding $\{u_0u_i\mid u_i\in N_G(v)\setminus \{u_0\} \}$.

Let $\ell$ be the line through $p_{u_0}'$ and $p_{u_1}'$, 
$r^*$ be a point on $\ell$ between $p_{u_0}$ and $p_{u_1}$ with $r^*\in B(p_{u_0}',\frac{\epsilon}{2})$,
and let $\delta>0$ be a small number which will be specified later.
When extending $\bp'$ to $\bp$, we shall further assume $p_v\in B(r^*,\delta)$. 
We show that $(G_1,\bp)$ (and hence $(G,\bp)$) is globally rigid. 

\begin{claim}
\label{claim:3d-1}
Let  $G_2$ be the graph obtained from $G_1$ by adding $\{u_1u_i\mid u_i\in N_G(v)\setminus \{u_0,u_1\}\}$.
Then $(G_1,\bp)$ is globally rigid in $\mathbb{R}^3$ if and only if $(G_2,\bp)$ is globally rigid in $\mathbb{R}^3$,
\end{claim}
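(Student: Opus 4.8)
The plan is to prove the two implications of the equivalence separately, the first being routine and the second being where the near-collinear placement of $p_v$ does its work. Since $G_2$ is obtained from $G_1$ by adding the edges $u_1u_i$ ($2\le i\le t$), any $\bq$ with $(G_2,\bq)\simeq(G_2,\bp)$ satisfies $(G_1,\bq)\simeq(G_1,\bp)$, so global rigidity of $(G_1,\bp)$ immediately gives global rigidity of $(G_2,\bp)$. For the converse it suffices to show that every $\bq$ with $(G_1,\bq)\simeq(G_1,\bp)$ automatically satisfies $\|q_{u_1}-q_{u_i}\|=\|p_{u_1}-p_{u_i}\|$ for all $2\le i\le t$; then $(G_1,\bq)\simeq(G_1,\bp)$ forces $(G_2,\bq)\simeq(G_2,\bp)$, and the global rigidity of $(G_2,\bp)$ transfers back to $(G_1,\bp)$.

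Fix $i$ and take any $\bq$ with $(G_1,\bq)\simeq(G_1,\bp)$. The graph $G_1$ contains the five edges $vu_0,vu_1,u_0u_1,vu_i,u_0u_i$, so the four points $q_{u_0},q_v,q_{u_1},q_{u_i}$ realize $K_4-u_1u_i$ with the same edge lengths as $p_{u_0},p_v,p_{u_1},p_{u_i}$. In particular the triangle $q_{u_0}q_vq_{u_1}$ is congruent to $p_{u_0}p_vp_{u_1}$, hence the distance from $q_v$ to the line $\ell_q$ through $q_{u_0}$ and $q_{u_1}$ equals the distance from $p_v$ to the line $\ell$ through $p_{u_0}$ and $p_{u_1}$, which is at most $\delta$ because $p_v\in B(r^*,\delta)$ with $r^*\in\ell$. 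The point $q_{u_i}$ is constrained only by the two distances $\|q_{u_i}-q_{u_0}\|$ and $\|q_{u_i}-q_v\|$ (edges of $G_1$ from $u_i$ to vertices outside $\{v,u_0,u_1\}$ can only shrink its locus), so $q_{u_i}$ lies on the circle obtained by rotating about the line through $q_{u_0}$ and $q_v$. Choosing $r^*$ at distance, say, $\epsilon/4$ from $p_{u_0}$ keeps $\|q_v-q_{u_0}\|=\|p_v-p_{u_0}\|$ bounded away from $0$ (and from $\|q_{u_0}-q_{u_1}\|$), so this rotation axis makes an angle $O(\delta)$ with $\ell_q$; since $q_{u_1}\in\ell_q$, moving $q_{u_i}$ along its circle changes $\|q_{u_1}-q_{u_i}\|$ by at most a quantity $\kappa(\delta)$ that depends only on $\bp'$, on $\epsilon$, and on the choice of $r^*$, and with $\kappa(\delta)\to 0$ as $\delta\to 0$. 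Applying this both to $\bq$ and to $\bp$ itself yields $\bigl|\,\|q_{u_1}-q_{u_i}\|-\|p_{u_1}-p_{u_i}\|\,\bigr|\le\kappa(\delta)$.

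Now the finiteness is brought in. Since $G-v\subseteq G_1$, the restriction of $\bq$ to $V\setminus\{v\}$ is a realization of $G-v$ equivalent to $\bp'$; as $(G-v,\bp')$ is generic and rigid, $c(G-v,\bp')$ is finite by Proposition~\ref{prop:key}, so $\|q_{u_1}-q_{u_i}\|$ ranges over a finite set of values as $\bq$ varies (over all $i$ as well). Choosing $\delta$ small enough that $\kappa(\delta)$ is strictly less than the least positive gap between $\|p_{u_1}-p_{u_i}\|$ and the other attainable values forces $\|q_{u_1}-q_{u_i}\|=\|p_{u_1}-p_{u_i}\|$ for every such $\bq$ and every $2\le i\le t$, which completes the reduction.

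The main obstacle is the middle step: converting ``the triangle $q_{u_0}q_vq_{u_1}$ is nearly flat'' into the quantitative bound $\kappa(\delta)\to 0$ on the range of the missing distance $\|q_{u_1}-q_{u_i}\|$, and verifying that all implicit constants depend only on data fixed before $\delta$ is chosen (the only delicate point being that $\|q_v-q_{u_0}\|$ stays bounded away from $0$ and from $\|q_{u_0}-q_{u_1}\|$, which the placement of $r^*$ guarantees). Once this estimate is available, the remainder is the same ``finitely many congruent realizations, so a small enough perturbation is forced to be exact'' mechanism already used in the first paragraph of this proof and in the proof of Lemma~\ref{lem:1extension-extension}.
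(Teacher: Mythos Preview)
Your proof is correct and follows essentially the same approach as the paper: exploit the near-collinearity of $p_{u_0},p_v,p_{u_1}$ to show that $\|q_{u_1}-q_{u_i}\|$ must be close to $\|p_{u_1}-p_{u_i}\|$, then use the finiteness coming from the rigidity of $G_1-v$ (Proposition~\ref{prop:key}) to upgrade ``close'' to ``equal''. The paper's version normalizes $q_{u_0}=p_{u_0}$, $q_{u_1}=p_{u_1}$, $q_{u_i}\in H$ and uses a limit argument ($\delta\to 0$) to pin $q_{u_i}$ down to one of two explicit points before reading off the distance equality, whereas you bound the distance variation over the circle directly; this is a minor streamlining but not a different idea.
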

%
\begin{proof}
Clearly $(G_2,\bp)$ is globally rigid if $(G_1,\bp)$ is globally rigid.
To see the opposite direction, 
let us take any vertex $u_i\in N_G(v)\setminus \{u_0,u_1\}$,
and we now show $\|p_{u_1}-p_{u_i}\|=\|q_{u_1}-q_{u_i}\|$ 
for every $\bq$ with   $(G_1,\bq)\simeq (G_1,\bp)$.

To see this, let $H$ be the plane in $\mathbb{R}^3$ determined by $p_{u_0}, p_{u_1}, p_{u_i}$.
Since $G_1$ contains edges $u_0u_1$ and $u_0u_i$, we may restrict our attention to 
equivalent realizations of the following form:
\begin{equation}
\label{eq:3d-2}
 {\cal E}(G_1,\bp)=\left\{\bq:V\rightarrow \mathbb{R}^3 \Big| \begin{matrix}  
(G_1,\bq)\simeq (G_1,\bp) \\
\ q_{u_0}=p_{u_0}, q_{u_1}=p_{u_1}, q_{u_i}\in H \end{matrix}
\right\}.
\end{equation}
Let us also consider 
\[
 {\cal E}(G_1-v,\bp')=\left\{\bq':V\setminus \{v\}\rightarrow \mathbb{R}^3 \Big| \begin{matrix}  
(G_1-v,\bq')\simeq (G_1-v,\bp') \\
\ q_{u_0}'=p_{u_0}', q_{u_1}'=p_{u_1}', q_{u_i}'\in H \end{matrix}
\right\}.
\]
Since $G_1-v$ is rigid, ${\cal E}(G_1-v,\bp')$ is a finite set.
Thus, $Q_i$, defined by $Q_i=\{q_{u_i}'\mid \bq'\in {\cal R}'(G_1-v,\bp')\}$, is finite.
Moreover, since $G_1-v$ is a subgraph of $G_1$ and $\bp$ is an extension of $\bp'$, 
\begin{equation}
\label{eq:last}
 \{q_{u_i}\mid \bq\in {\cal E}(G_1,\bp)\}\subseteq Q_i.
\end{equation}
Since $G_1-v$ has edge $u_0u_i$, all points of $Q_i$ 
are contained in circle $H\cap \partial B(p_{u_0},\|p_{u_0}-p_{u_i}\|)$.

\begin{figure}[t]
\centering
\begin{minipage}{0.5\textwidth}
\centering
\includegraphics[scale=0.6]{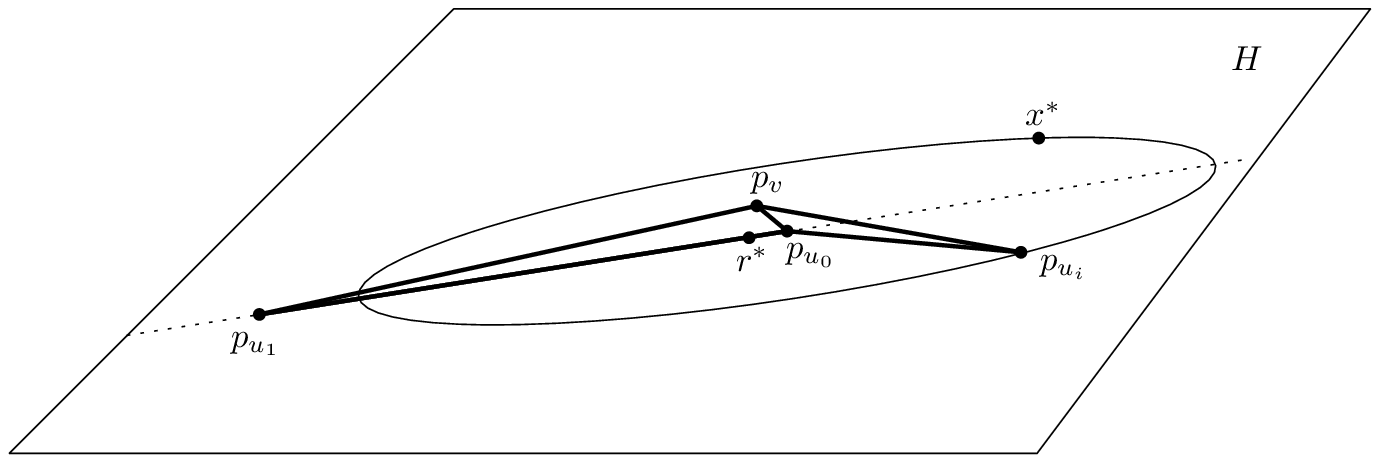}
\par
(a)
\end{minipage}
\begin{minipage}{0.4\textwidth}
\centering
\includegraphics[scale=0.6]{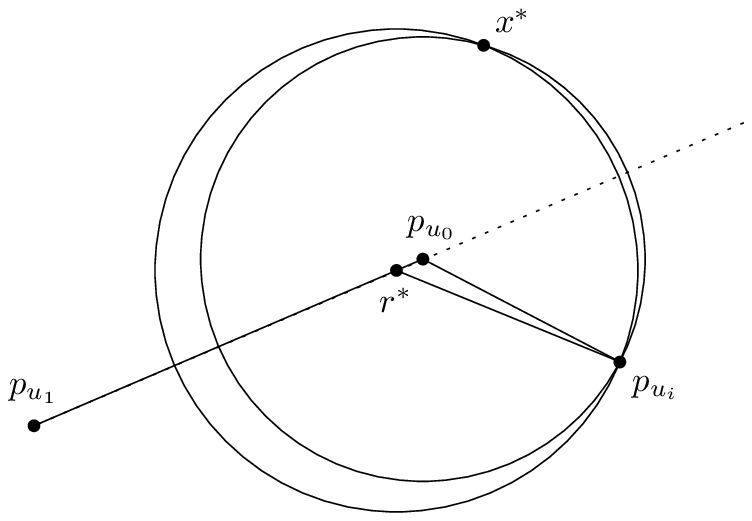}
\par
(b)
\end{minipage}
\caption{Figures in the proof of Theorem~\ref{thm:main3}. 
(a) The plane and the circle illustrate $H$ and $\partial B(p_{u_0},\|p_{u_0}-p_{u_i}\|)$.
All points of $Q_i$ are contains the circle.
(b) The view of figure (a) from the top,
where the two circles illustrate $\partial B(p_{u_0},\|p_{u_0}-p_{u_i}\|)$ 
and $\partial B(r^*,\|r^*-p_{u_i}\|)$.}
\label{fig}
\end{figure}

Recall that $\ell$ denotes the line through $p_{u_0}$ and $p_{u_1}$.
Let $x^*$ be the point obtained by reflecting $p_{u_i}$ along $\ell$ on $H$. 
(See Figure~\ref{fig}.)
Then $H\cap \partial B(r^*,\|r^*-p_{u_i}\|)$ intersects $H\cap \partial B(p_{u_0},\|p_{u_0}-p_{u_i}\|)$ at 
$p_{u_i}$ and $x^*$.

Let $\br:V\rightarrow \mathbb{R}^3$ be an extension of $\bp'$ such that $\br(v)(=r_v)=r^*$.
Since $G_1$ contains edges $\{u_0u_1, u_0v, u_1v\}$, 
\begin{equation}
\label{eq:3d-5}
 r^*=r_v=q_v \text{ for any } \bq\in {\cal E}(G_1,\br)
\end{equation}
(where ${\cal E}(G_1,\br)$ is defined by replacing $\bp$ with $\br$ in (\ref{eq:3d-2})).
Furthermore, since $G_1$ contains edges $\{u_0u_i,vu_i\}$, 
\begin{equation}
\label{eq:3d-6}
 q_{u_i}=r_{u_i}=p_{u_i} \text{  or } q_{u_i}=x^* \text{ for any } \bq\in {\cal E}(G_1,\br).
\end{equation}

Now we consider $(G,\bp)$ with  $p_{v}\in B(r^*,\delta)$ for some small $\delta>0$.
Because $G_1$ contains edges $\{vu_0, vu_1, vu_i, u_0u_1, u_0u_i\}$, we have the following 
for any $\bq\in {\cal E}(G_1,\bp)$:
\begin{description}
\item[(i)] $q_{v}$ is in circle 
$\{y\in \mathbb{R}^3\mid \|p_{u_0}-y\|=\|p_{u_0}-p_v\|, \|p_{u_1}-y\|=\|p_{u_1}-p_v\|\}$;
\item[(ii)] $q_{u_i}$ is one of the two intersection points 
of two circles $H\cap \partial B(p_{u_0},\|p_{u_0}-p_{u_i}\|)$ and $H\cap \partial B(q_{v},\|p_v-p_{u_i}\|)$;
\end{description}
Now we make $\delta\rightarrow 0$.
Then $p_v$ converges to $r^*$ and hence $\bp$ converges to $\br$.
Also, by (i) and (\ref{eq:3d-5}), $q_v$ converges to $r^*$ for any $\bq\in {\cal E}(G_1,\bp)$.
Hence, by (ii) and (\ref{eq:3d-6}), $q_{u_i}$ converges to either $p_{u_i}$ or $x^*$.
This implies that, by taking small enough $\delta>0$,
$q_{u_i}$ can be arbitrary close to either $p_{u_i}$ or $x^*$.
 
Recall that $Q_i$ is a finite set.
Hence there is $\zeta>0$ such that
\[
Q_i\cap (B(p_{u_i},\zeta)\cup B(x^*,\zeta))\subseteq \{p_{u_i},x^*\}. 
\]
Such $\zeta$ is independent from the position of $p_v$ as 
$Q_i$ is determined by $(G_1-v,\bp')$.
However, since (\ref{eq:last}) holds for any choice of $p_v$,
we conclude that 
\begin{equation}
\label{eq:3d-3}
 q_{u_i}=p_{u_i} \text{ or } q_{u_i}=x^* \text{ for all } \bq\in {\cal E}(G_1,\bp) 
\end{equation}
by taking $p_v\in B(r^*,\delta)$ for some small $\delta$,
which in turn implies
\begin{equation}
\label{eq:3d-4}
 \|q_{u_i}-q_{u_1}\|=\|p_{u_i}-p_{u_1}\| \text{ for all } \bq\in {\cal E}(G_1,\bp). 
\end{equation}

Note that (\ref{eq:3d-4}) holds for all $u_i\in N_G(v)\setminus \{u_0,u_1\}$
since the choice of $u_i$ was arbitrary.
\end{proof}

By Claim~\ref{claim:3d-1}, $(G,\bp)$ is globally rigid if $(G_2,\bp)$ is globally rigid.

Notice that  $\{u_0,u_1,u_2,v\}$ induces the subgraph isomorphic to $K_4$ in $G_2$.
Hence, to see the global rigidity of $(G_2,\bp)$, we may concentrate on realizations of the form
\[
 {\cal S}'(G_2,\bp)=\left\{\bq:V\rightarrow \mathbb{R}^3 \Big| \begin{matrix}  
(G_2,\bq)\simeq (G_2,\bp) \\
\ q_{u_0}=p_{u_0}, q_{u_1}=p_{u_1}, q_{u_2}=p_{u_2}, q_{v}=p_v \end{matrix}
\right\},
\]
and for analyzing this let us also consider 
\[
 {\cal S}'(G_2-v,\bp')=\left\{\bq':V\setminus \{v\}\rightarrow \mathbb{R}^3\Big| \begin{matrix} 
(G_2-v,\bq')\simeq (G_2-v,\bp') \\
\ q_{u_0}'=p_{u_0}', q_{u_1}'=p_{u_1}', q_{u_2}'=p_{u_2}' \end{matrix}
\right\}.
\]
Since $G_2-v$ is rigid, ${\cal S}'(G_2-v,\bp')$ is finite.

For each $u_i\in N_G(v)\setminus \{u_0,u_1,u_2\}$ and $\bq \in {\cal E}(G_2,\bp)$, 
since $\{v, u_0,u_1,u_i\}$ induces $K_4$ in $G_2$, 
$q_{u_i}$ is equal to either $p_{u_i}$ or the reflection image of $p_{u_i}$
with respect to the mirror plane spanned by $p_{u_0}, p_{u_1}, p_v$.

Since ${\cal S}'(G_2-v,\bp')$ is independent from the position of $p_v$ 
and ${\cal S}'(G_2-v,\bp')$ is finite, 
we can take $p_v\in B(r^*,\delta)$ such that, for every $\bq'\in {\cal S}'(G_2-v,\bp')$,
$q_{u_i}'$ is not the reflection image of $p_{u_i}'$ with respect to the plane spanned by 
$p_{u_0}', p_{u_1}', p_v$.
Since 
\[
 \{q_{u_i}\mid \bq\in {\cal S}'(G_2,\bp)\}\subseteq \{q_{u_i}'\mid \bq\in {\cal S}'(G_2-v,\bp')\},
\]
we conclude that $q_{u_i}=p_{u_i}$ for any $\bq \in {\cal S}'(G_2,\bp)$.
This in turn implies that $(G_2,\bp)$ is globally rigid if 
$(G_3,\bp)$ is globally rigid, where $G_3$ is the graph obtained from $G$ by adding all edges 
between two vertices in $N_G(v)$.
$(G_3,\bp)$ is clearly globally rigid since $(G-v+K(N_G(v)),\bp')$ is globally rigid.
This completes the proof.
\end{proof}

\end{document}